\numberwithin{equation}{section}
\theoremstyle{plain}
\newtheorem{theorem}[equation]{Theorem}
\newtheorem{lemma}[equation]{Lemma}
\newtheorem{corollary}[equation]{Corollary}
\newtheorem{proposition}[equation]{Proposition}
\theoremstyle{definition}
\newtheorem{definition}[equation]{Definition}
\newtheorem*{notation*}{Notation}
\newtheorem{remark}[equation]{Remark}
\newtheorem*{remark*}{Remark}
\newtheorem{example}[equation]{Example}
\newcommand{\N}[0]{\mathbb{N}}
\newcommand{\Z}[0]{\mathbb{Z}}
\newcommand{\C}[0]{\mathbb{C}}
\newcommand{\R}[0]{\mathbb{R}}
\newcommand{\Chi}[0]{\mathcal{X}}
\newcommand{\PRB}[0]{\mathcal{P}} 
\newcommand{\SUM}[0]{\sum\displaylimits}
\newcommand{\PROD}[0]{\prod\displaylimits}
\newcommand{\INT}[0]{\int\displaylimits}
\newcommand{\D}[0]{\mathrm{d}} 
\newcommand{\EXP}[1]{\mathrm{exp}\left(#1\right)}
\newcommand{\A}[0]{\widetilde{A}} 
\newcommand{\PB}[1]{\widetilde{S_{#1}} / S_{#1}} 
\newcommand{\CS}[0]{\mathcal{D}} 
\newcommand{\HYP}[0]{\mathcal{H}} 
\newcommand{\REG}[1]{\mathcal{R}(#1)} 
\newcommand{\IDEAL}[0]{\mathcal{I}}
\newcommand{\ra}[0]{\rightarrow}
\newcommand{\alc}[0]{\mathcal{A}} 
\newcommand{\typeA}[0]{affine arrangement of type $A_n$}
\newcommand{\Weyl}[0]{Weyl group of type $A_n$}
\newcommand{\affWeyl}[0]{affine Weyl group $\A_n$}
\newcommand{\lev}{\mathrm{lev}}
\newcommand{\ubar}{\underline} 
\tikzset{*->-*/.style={decoration={
  markings,
  mark=at position .5 with {\arrow{>}},
  mark=at position 0 with { \draw [fill] (0,0) circle [radius=2pt];},
  mark=at position 1 with { \draw [fill] (0,0) circle [radius=2pt];}},postaction
={decorate}}}
\tikzset{dot/.style={draw,shape=circle,fill=black,scale=.5}}
\tikzset{Bullet/.style={draw,shape=circle,fill=black,scale=.5}}
  \tikzset{*--*/.style={decoration={
  markings,
  mark=at position 0 with { \draw [fill] (0,0) circle [radius=2pt];},
  mark=at position 1 with { \draw [fill] (0,0) circle [radius=2pt];}},postaction
={decorate}}}
\begin{document}
\title{Ordinal pattern probabilities for symmetric random walks}
\author{Hugh Denoncourt \thanks{Email: \href{mailto:Hugh.Denoncourt@Colorado.edu}{Hugh.Denoncourt@Colorado.edu} - No affiliation}}
\date{}
\maketitle
\begin{abstract}
An ordinal pattern for a finite sequence of real numbers is a permutation that records the relative positions in the sequence. For random walks with steps drawn uniformly from $[-1,1]$, we show an ordinal pattern occurs with probability $\frac{|[1,w]|}{2^n n!}$, where $[1,w]$ is a weak order interval in the affine Weyl group $\A_n$. For random walks with steps drawn from a symmetric Laplace distribution, the probability is $\frac{1}{2^n \PROD_{j=1}^n \lev(\pi)_j}$, where $\lev(\pi)_j$ measures how often $j$ occurs between consecutive values in $\pi$. Permutations whose consecutive values are at most two positions apart in $\pi$ are shown to occur with the same probability for any choice of symmetric continuous step distribution. For random walks with steps from a mean zero normal distribution, ordinal pattern probabilities are determined by a matrix whose $ij$-th entry measures how often $i$ and $j$ are between consecutive values. \\ \\
\emph{Keywords: ordinal patterns; hyperplane arrangements; weak order; affine symmetric group.}
\end{abstract}
\section{Introduction} 
\label{s:introduction}
Let $(a_1,\ldots,a_n) \in \R^n$ be an arbitrary finite sequence of real numbers. A permutation $\pi$ such that $\pi(i) = j$ if $a_i$ is the $j$-th largest position is called the \emph{ordinal pattern for $(a_1,\ldots,a_n)$}. 

For a given sequence $Z_1,Z_2,\ldots$ of continuous random variables, it is natural to ask what the probability is that a given permutation $\pi \in S_n$ occurs as an ordinal pattern in a length $n$ subsequence of outcomes. It is known \cite{bandt-shiha} that for exchangeable random variables, such as those that are independent and identically distributed, this probability is $1/n!$ for all $\pi \in S_n$. By contrast, the distribution on $S_n$ is never uniform for ordinal patterns in positions of a random walk when $n \geq 3$.

Exact probabilities have been calculated for ordinal pattern occurrence in a random walk for a few cases. For $n = 3,4$, Bandt and Shiha \cite{bandt-shiha}, DeFord and Moore \cite{deford-moore}, and Zare \cite{zare} gave values for the case of normally distributed steps of mean zero. For $n = 3,4$, DeFord and Moore \cite{deford-moore} gave piece-wise polynomials for the case of uniform distributions on $[b-1,b]$ for $b \in \left[\frac{1}{2},1\right)$. 

In \cite{elizalde-martinez} and \cite{martinez}, Elizalde and Martinez showed that certain pairs of permutations have the same probability of occurring as an ordinal pattern in a random walk regardless of choice of continuous step distribution. Furthermore, Martinez \cite{martinez} gave a detailed description of regions of steps that generate a given ordinal pattern $\pi$ in terms of a hyperplane arrangement equivalent to the braid arrangement. In this paper, we use hyperplane arrangements and the tools developed in \cite{elizalde-martinez} and \cite{martinez} to find probabilities for ordinal pattern occurrence in certain random walks. 
\subsection{Main results}
\label{ss:results}
Let $X_1, X_2,\ldots$ be independent and identically distributed continuous random variables called \emph{steps} with probability density function $f:\R \ra \R$. Let $Z_i := \sum_{j = 1}^{i - 1} X_j$ be the \emph{positions} of the random walk. For $\pi \in S_{n+1}$, let $\PRB(f,\pi)$ denote the probability that $\pi$ occurs as an ordinal pattern in a length $n + 1$ consecutive subsequence of positions generated by $n$ steps drawn from $f$. All of the main results of the paper are statements about $\PRB(f,\pi)$ for various choices of density function $f$.

Let $\lev(\pi)_j$ denote the number of positions $i$ such that $\pi(i) \leq j < \pi(i+1)$ or such that $\pi(i+1) \leq j < \pi(i)$. In Section \ref{s:laplace}, we use a direct calculation to show that when $f$ is a Laplace distribution, the value of $\PRB(f,\pi)$ is computed from the values of $\lev(\pi)_1,\ldots,\lev(\pi)_n$.
\\ \\
\noindent
\textbf{Theorem \ref{t:laplace}.}
Let $\pi \in S_{n+1}$ and let $f$ be the density function for a Laplace distribution with mean zero. Then
\[
\PRB(f,\pi) = \frac{1}{2^n \prod_{j=1}^n \lev(\pi)_j}.
\] 

We say a permutation is \emph{almost consecutive} if its consecutive values are at most two positions apart in its $1$-line notation. Recall that a function is symmetric if $f(-x) = f(x)$ for all $x \in \R$. In Section \ref{s:universal}, we show that $\PRB(f,\pi)$ does not depend upon the choice of symmetric density function $f$ if $\pi$ is almost consecutive. 
\\ \\
\noindent
\textbf{Theorem \ref{t:acformula}.}
Let $\pi \in S_{n+1}$ be an almost consecutive permutation. Let $f:\R \ra \R$ be a symmetric density function for a continuous probability distribution. Then
\[
\PRB(f,\pi) = \frac{1}{2^n \PROD_{i=1}^n \lev(\pi)_i}.
\]

In Section \ref{s:affineA}, we show that when $f$ is the density function for the uniform distribution on $[-1,1]$, we calculate $\PRB(f,\pi)$ by counting regions of the {\typeA} inside a rational polytope constructed from $\pi$. The counted regions correspond to elements in a weak order interval of the {\affWeyl}.
\\ \\
\noindent
\textbf{Theorem \ref{t:weakordertheorem}.}
Let $f = \frac{1}{2}\Chi_{[-1,1]}$ be the uniform density function on $[-1,1]$. Let $\pi \in S_{n+1}$. Then
\[
\PRB(f,\pi) = \frac{|[1,w]|}{2^n n!},
\]
where $[1,w]$ is a weak order interval of the {\affWeyl}. 
\\

A corollary is that $1 (n + 1)$ or $(n + 1) 1$ appears in the $1$-line notation for $\pi \in S_{n+1}$ if and only if $\PRB(f,\pi) = \frac{1}{2^n n!}$. By contrast, for the Laplace and normal distributions, the other values in the $1$-line notation for $\pi$ typically influence the value of $\PRB(f,\pi)$.

In Section \ref{s:normaldistribution}, we show that when $f$ is the density function for a mean zero normal distribution, we can sometimes compare $\PRB(f,\pi)$ to $\PRB(f,\tau)$. Let $\lev(\pi)_{i,j}$ be the number of positions $k \in [n]$ satisfying $\pi(k) \leq i,j < \pi(k+1)$ or $\pi(k+1) \leq i,j < \pi(k)$.
\\ \\
\noindent
\textbf{Theorem \ref{t:comparegaussian}.}
Let $f:\R \ra \R$ be the density function for the normal distribution with mean zero and any variance. Let $\pi, \tau \in S_{n+1}$. Suppose $\lev(\pi)_{i,j} \leq \lev(\tau)_{i,j}$ for all $(i,j) \in \Phi^+$. Then $\PRB(f,\pi) \geq \PRB(f,\tau)$.
\subsection{The larger context for these results}
\label{ss:greatercontext}
Using ordinal patterns to analyze a time series is sometimes called \emph{ordinal analysis}. Bandt and Pompe \cite{bandt-pompe} introduced permutation entropy, which involves computing ordinal pattern frequencies in a time series. Subsequently, many papers suggested applying ordinal analysis to a variety of applied contexts. The survey \cite{amigo} shows that ordinal pattern frequency often captures qualitative features of a time series. For example, iterated map dynamical systems with deterministic chaotic behavior tend to have forbidden ordinal patterns, whereas white noise does not. 

We can interpret the results of this paper as providing a kind of fingerprint for certain random processes. DeFord and Moore \cite{deford-moore} define KL divergence for the distribution of patterns of length $n$ in $Z$ from those in $X$ by
\[
\mathcal{D}_{\text{KL}_n}(X || Z) = \sum_{\pi \in S_n} P_X(\pi) \log\left(\frac{P_X(\pi)}{P_Z(\pi)}\right),
\]
where $X$ and $Z$ are random variables. Thus, comparisons to walks with steps from symmetric uniform and Laplace densities can be made via this version of KL divergence. 

By Theorem \ref{t:acformula}, there exists a value $p_n$ for the probability that an almost consecutive permutation arises from a length $n$ sequence generated by $n - 1$ steps from a symmetric density function. Thus, for fixed $n$, we have a Bernoulli trial whose ``success'' probability is the same for any random walk whose steps have a symmetric density function. In \cite{denoncourt}, the following values are determined for $p_n$:
\begin{center}
\begin{tabular} { c | c }
$n$ & $p_n$\\
\hline
$2$ & $1$\\
$3$ & $1$\\
$4$ & $2/3$\\
$5$ & $5/12$\\
$6$ & $251/960$\\
$7$ & $463/2880$\\
$8$ & $15281 / 161280$
\end{tabular}
\end{center}
Although the symmetric density function hypothesis is limited in scope, the above values hold for the mean zero Gaussian distribution. Thus, after transforming a sequence generated by a random walk to have mean zero steps, it is reasonable to expect to see values close to those given above on a long enough time scale.
\section{Ordinal pattern preliminaries} 
\label{s:ordinal}
Throughout the paper, random walks have $n$ steps and $n + 1$ positions. The $n$ steps are outcomes of $n$ independent and identically distributed continuous random variables $X_1,\ldots,X_n$. Every tuple $(x_1,\ldots,x_n) \in \R^n$ of steps generates a tuple $(z_1,\ldots,z_{n+1}) \in \R^{n+1}$ of walk positions, where $z_1 = 0$, and $z_i = x_1 + \cdots + x_{i-1}$ for $i > 1$. We say a walk $(z_1,\ldots,z_{n+1})$ has \emph{ordinal pattern $\pi \in S_{n+1}$} if $\pi(i) = j$ whenever $z_i$ is the $j$-th largest position of $(z_1,\ldots,z_{n+1})$. We refer to $(x_1,\ldots,x_n)$ as \emph{step coordinates} for the random walk and the generated tuple $(z_1,\ldots,z_{n+1})$ as \emph{walk coordinates} for the random walk. Ordinal pattern probabilities are calculated as integrals over regions of $\R^n$, but the ordinal patterns themselves are permutations in $S_{n+1}$ derived from walk coordinates in $\R^{n+1}$.  

Following \cite{elizalde-martinez}, we define a map $p:\R^n \setminus Z \ra S_{n+1}$ by
\begin{equation*}
p(x_1,\ldots,x_n) = \pi,
\end{equation*}
where 
\begin{equation*}
\pi(i) = \left|\{j \in \{1,\ldots,n+1\}: z_i \geq z_j\} \right|
\end{equation*}
and $Z$ is the measure zero set of steps $(x_1,\ldots,x_n)$ such that $z_i = z_j$ for some $i \neq j$. 
\begin{definition} 
\label{d:generate}
Let $\pi \in S_{n+1}$. Let $\bm{x} \in \R^n$. We say $\bm{x}$ \emph{generates $\pi$} if $p(\bm{x}) = \pi$. We denote the region of tuples that generate $\pi$ by $D_\pi$. Thus,
\[
D_\pi = \{\bm{x} \in \R^n : p(\bm{x}) = \pi \}.
\]
\end{definition}
We interpret $p$ as mapping a tuple of steps to the ordinal pattern of the generated walk positions. The region $D_\pi$ contains all such tuples for a fixed $\pi \in S_{n+1}$. 

In \cite[Section 2]{elizalde-martinez}, Elizalde and Martinez define an \emph{edge diagram} as a collection of oriented vertical line segments connecting $(i,\pi(i))$ and $(i,\pi(i+1))$. The orientation is downward if $\pi(i) > \pi(i+1)$ and upward otherwise. A \emph{level} is a vertical interval, denoted $\ubar{j}$, whose $y$-coordinates are in $[j,j+1]$. In the edge diagram, the \emph{edge} $e_i$ is a formal sum $e_i = \sum_{j=\pi(i)}^{\pi(i+1) - 1} \ubar{j}$ if $\pi(i) < \pi(i+1)$ and $e_i = -\sum_{j = \pi(i+1)}^{\pi(i) - 1} \ubar{j}$ if $\pi(i) > \pi(i+1)$. In Section \ref{s:laplace}, we introduce a tuple $\lev(\pi)$ that records the number of edges that contain $\ubar{j}$ or its negation. (See figure \ref{f:edgediagram} for an example of an edge diagram and $\lev(\pi)$.) An edge $e_i$ contains $\ubar{j}$ or its negation if $\pi(i) \leq j < \pi(i+1)$ or $\pi(i+1) \leq j < \pi(i)$.

We primarily use the edge diagram as a visual description of $\lev(\pi)$. The edge diagram can be read off from the matrix given in the next definition, as can properties of the region $D_\pi$.
\begin{figure}[htb]
	\centering
		\begin{tikzpicture}[scale=.5]
			\foreach \x in {1,2,...,6}
			\draw[dotted] (0,\x)--(15,\x);

			\draw(20.5,3.5) node{$\implies \lev(\pi) = (2,4,3,2,2)$};

			\draw[thick,*->-*](4.5,3) node[left]{3}--(4.5,1);
			\draw[thick,*->-*](5.5,1) node[left]{1}--(5.5,5);
			\draw[thick,*->-*](6.5,5) node[left]{5}--(6.5,6);
			\draw[thick,*->-*](7.5,6) node[left]{6}--(7.5,2);
			\draw[thick,*->-*](8.5,2) node[left]{2}--(8.5,4) node[right]{4};
			
			\draw(0.5,7) node{Levels};
			\draw(6.0,8) node{Edge diagram};
			\draw(6.0,7) node{for $\pi = 315624$};
			\draw(11.5,7) node{Level count};
			
			{\foreach \x in {1,2,...,5}
			\draw (0.5,0.5+\x) node{$\ubar{\x}$};
			}
			
			\draw (11.5,1.5) node{$2$};
			\draw (11.5,2.5) node{$4$};
			\draw (11.5,3.5) node{$3$};
			\draw (11.5,4.5) node{$2$};
			\draw (11.5,5.5) node{$2$};
		\end{tikzpicture}
		\caption{The edge diagram for a permutation and its level count. Figure adapted from \cite{elizalde-martinez}.}
		\label{f:edgediagram}
\end{figure}
\begin{definition} 
\label{d:represent}
(Martinez \cite[Section 2.1]{martinez}) 
Let $L:S_{n+1} \ra GL_n(\C)$ be defined by $L(\pi) = L_\pi$, where $L_\pi$ is a matrix whose entries are given by
\[
\left(L_\pi\right)_{ij} :=
\begin{cases}
1, & \;\;\; \text{if } \pi(i) \leq j < \pi(i+1),\\
-1, & \;\;\; \text{if } \pi(i+1) \leq j < \pi(i),\\ 
0 & \;\;\; \text{otherwise.}
\end{cases} 
\]
Thus, the $i$-th coordinate of $L_\pi(x_1,\ldots,x_n)$ is given by
\[
L_\pi(x_1,\ldots,x_n)_i = 
\begin{cases} 
x_{\pi(i)} + \cdots + x_{\pi(i+1) - 1}, & \;\; \text{if } \pi(i) < \pi(i+1),\\
-(x_{\pi(i+1)} + \cdots + x_{\pi(i) - 1}), & \;\; \text{if } \pi(i+1) < \pi(i).
\end{cases} 
\]
\end{definition}
Thus, the edges in the edge diagram may be expressed as $e_i = \sum_{j=1}^n (L_\pi)_{i,j} \ubar{j}$. That is, the orientation of edges and which levels appear in edges of an edge diagram can be read from the rows of $L_\pi$. 
\begin{example}
\label{ex:ellpi}
Let $\pi = 315624$. Then
\[
L_\pi = 
\begin{bmatrix}
-1 & -1 & 0 & 0 & 0\\
1 & 1 & 1 & 1 & 0\\
0 & 0 & 0 & 0 & 1\\
0 & -1 & -1 & -1 & -1\\
0 & 1 & 1 & 0 & 0
\end{bmatrix}
.
\]
\end{example}
The next three lemmas capture the basic properties of the representation $L$.
\begin{lemma}
\label{l:represent}
(Martinez \cite[Lemma 2.1.3]{martinez})
The function $L:S_{n+1} \ra GL_n(\C)$ given in Definition \ref{d:represent} is a homomorphism.
\end{lemma}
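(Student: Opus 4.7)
The plan is to rewrite $L_\pi$ in a more transparent geometric form and then verify the homomorphism identity by a short telescoping calculation. First, I would unify the two cases of Definition \ref{d:represent}: setting $z_k := x_1 + \cdots + x_{k-1}$ for $x = (x_1, \ldots, x_n)$ (so $z_1 = 0$ and $z_1, \ldots, z_{n+1}$ are the walk positions generated by $x$), both cases collapse into the single formula
\[
L_\pi(x)_i = z_{\pi(i+1)} - z_{\pi(i)}.
\]
Geometrically, $L_\pi$ sends the step sequence of a walk to the step sequence of the walk whose positions are the original ones permuted by $\pi$.

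Given this reformulation, I would prove $L(\sigma \tau) = L(\sigma) L(\tau)$ directly. Set $y := L_\tau(x)$ and let $w_m := y_1 + \cdots + y_{m-1}$ be the walk positions generated by $y$. Substituting $y_j = z_{\tau(j+1)} - z_{\tau(j)}$ and telescoping gives $w_m = z_{\tau(m)} - z_{\tau(1)}$, so that
\[
L_\sigma(L_\tau(x))_i = w_{\sigma(i+1)} - w_{\sigma(i)} = z_{\tau(\sigma(i+1))} - z_{\tau(\sigma(i))}.
\]
This is precisely the unified formula applied to the composed permutation, and with the paper's composition convention it equals $L_{\sigma\tau}(x)_i$. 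Hence $L$ is a homomorphism.

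The main (and essentially only) obstacle is bookkeeping: I would need to (i) carefully verify the unified formula against the two explicit cases in Definition \ref{d:represent}, and (ii) keep track of which side of composition is applied first, since the same telescoping computation yields $L_\sigma L_\tau = L_{\tau\sigma}$ under the opposite convention. Once these are pinned down, the argument reduces to the single telescoping identity $\sum_{j=1}^{m-1}(z_{\tau(j+1)} - z_{\tau(j)}) = z_{\tau(m)} - z_{\tau(1)}$. Invertibility (so that each $L_\pi$ lies in $GL_n(\C)$) then comes for free: $L_{\mathrm{id}}$ is clearly the identity matrix, and the homomorphism identity forces $L_{\pi^{-1}} = L_\pi^{-1}$.
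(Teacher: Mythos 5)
Your argument is correct, and it is worth noting that the paper itself gives no proof here: Lemma \ref{l:represent} is simply cited from Martinez's thesis. Your reformulation $L_\pi(x)_i = z_{\pi(i+1)} - z_{\pi(i)}$ is exactly right (it matches both cases of Definition \ref{d:represent}, since $z_{b} - z_{a} = x_a + \cdots + x_{b-1}$ for $a<b$), and the telescoping step $w_m = z_{\tau(m)} - z_{\tau(1)}$ with the subsequent cancellation of $z_{\tau(1)}$ is the whole content of the multiplicativity claim. This is arguably the conceptually correct proof: it says that $L_\pi$ is conjugate (via the steps-to-positions map) to the permutation action on walk positions, modulo the overall additive constant that cancels when taking differences. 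The one point you flag deserves emphasis rather than reassurance: your computation gives $L_\sigma L_\tau = L_{\rho}$ with $\rho(i) = \tau(\sigma(i))$, so under the standard left-composition convention $(\sigma\tau)(i)=\sigma(\tau(i))$ the map $L$ is an \emph{anti}-homomorphism; one can check this concretely in $S_3$, where $L_{s_1}L_{s_2} = L_{s_2 \circ s_1} \neq L_{s_1 \circ s_2}$. The lemma as stated is therefore correct only under the opposite (right-action) composition convention, which neither this paper nor your write-up pins down explicitly; none of the downstream uses in the paper are affected, since they only need that the image is a group, that $L_{\pi^{-1}} = L_\pi^{-1}$, and multiplicativity along products of generators. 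Your closing remark that invertibility follows from $L_{\mathrm{id}} = I$ together with multiplicativity is also correct and gives Lemma \ref{l:det} short of the determinant value.
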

\begin{lemma}
\label{l:image}
(Martinez \cite[Lemma 2.1.4]{martinez})
Let $\pi \in S_{n+1}$. Then
\[
L_\pi\left(\R_{> 0}^n\right) = D_\pi.
\]
\end{lemma}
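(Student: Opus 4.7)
The plan is to rewrite the linear map $L_\pi$ in walk coordinates and read off both containments from the ordering of the walk positions. The key observation, which I would establish first, is the identity
\[
L_\pi(x_1,\ldots,x_n)_i \;=\; z_{\pi(i+1)} - z_{\pi(i)},
\]
where $z_k = x_1 + \cdots + x_{k-1}$ is the walk generated by $\bm{x}$. This follows immediately from Definition \ref{d:represent} by splitting into the cases $\pi(i) < \pi(i+1)$ and $\pi(i+1) < \pi(i)$ and telescoping the sum of consecutive steps; both cases give the same expression.

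For the containment $L_\pi(\R_{>0}^n) \subseteq D_\pi$, I would take $\bm{x} \in \R_{>0}^n$, so that $z_1 < z_2 < \cdots < z_{n+1}$, and set $\bm{y} := L_\pi(\bm{x})$ with walk coordinates $w_1,\ldots,w_{n+1}$. A second telescoping, using the identity above, gives $w_i = z_{\pi(i)} - z_{\pi(1)}$. Translation by the constant $-z_{\pi(1)}$ preserves ordinal patterns, so the rank of $w_i$ among $w_1,\ldots,w_{n+1}$ equals the rank of $z_{\pi(i)}$ among $z_1,\ldots,z_{n+1}$, which is $\pi(i)$ since the $z_k$ are strictly increasing. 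Hence $p(\bm{y}) = \pi$, i.e., $\bm{y} \in D_\pi$.

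For the reverse containment, given $\bm{y} \in D_\pi$ with walk coordinates $\bm{w}$, I would define $\bm{x} := L_{\pi^{-1}}(\bm{y})$. By Lemma \ref{l:represent}, $L_\pi \bm{x} = L_\pi L_{\pi^{-1}} \bm{y} = \bm{y}$, so it only remains to check $\bm{x} \in \R_{>0}^n$. Applying the same walk-coordinate identity to the permutation $\pi^{-1}$ and the input $\bm{y}$ yields
\[
x_k \;=\; L_{\pi^{-1}}(\bm{y})_k \;=\; w_{\pi^{-1}(k+1)} - w_{\pi^{-1}(k)}.
\]
Since $p(\bm{y}) = \pi$ means $w_j$ has rank $\pi(j)$, the sequence $w_{\pi^{-1}(1)} < w_{\pi^{-1}(2)} < \cdots < w_{\pi^{-1}(n+1)}$ is strictly increasing, so each $x_k$ is positive.

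The main obstacle is purely notational: keeping the direction of the index substitution straight (stepping from index $k$ to rank $\pi(k)$ versus from rank $j$ to index $\pi^{-1}(j)$) and verifying that the case split in Definition \ref{d:represent} really does collapse to the single telescoping formula $z_{\pi(i+1)} - z_{\pi(i)}$. Once that bookkeeping is clean, both directions are immediate from the monotonicity of the walk generated by positive steps and the definition of the ordinal pattern.
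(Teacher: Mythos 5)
Your argument is correct. The paper itself gives no proof of this lemma---it is quoted from Martinez \cite[Lemma 2.1.4]{martinez}---but your proof is the natural one and matches the interpretation the paper itself describes (``$p$ maps a tuple of steps to the ordinal pattern of the generated walk positions''): the identity $L_\pi(\bm{x})_i = z_{\pi(i+1)} - z_{\pi(i)}$ is exactly the telescoped form of Definition \ref{d:represent}, the forward containment follows from $w_i = z_{\pi(i)} - z_{\pi(1)}$ and translation-invariance of ranks, and the reverse containment correctly uses the homomorphism property of $L$ from Lemma \ref{l:represent} together with the same identity applied to $\pi^{-1}$. The only point worth making explicit is that $\bm{y} \in D_\pi$ guarantees the walk positions $w_j$ are distinct (since $p$ is only defined off the set $Z$), which is what makes the inequalities $w_{\pi^{-1}(k)} < w_{\pi^{-1}(k+1)}$ strict and hence each $x_k$ strictly positive.
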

\begin{lemma}
\label{l:det}
(Martinez \cite[Lemma 2.1.3]{martinez})
Let $\pi \in S_{n+1}$. Then $\mathrm{det}(L_\pi) = \pm 1$.
\end{lemma}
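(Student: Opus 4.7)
The plan is to exploit the homomorphism property already established in Lemma \ref{l:represent}. Composition with the determinant yields a group homomorphism $\det \circ L : S_{n+1} \to \C^*$, so to pin down its values it suffices to evaluate it on a generating set, for which I would use the adjacent transpositions $s_1, \ldots, s_n$.

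First I would read off $L_{s_i}$ directly from Definition \ref{d:represent}. Since $s_i$ fixes every letter outside $\{i, i+1\}$, every row of $L_{s_i}$ indexed by $k \notin \{i-1, i, i+1\}$ agrees with the corresponding row of the identity matrix. The (at most) three remaining rows are supported in columns $\{i-1, i, i+1\}$ and are completely determined by the local configuration of $i, i+1$ (with minor modifications at the boundaries $i = 1$ and $i = n$). Expanding the determinant along the identity rows reduces the computation to that of a small $3 \times 3$ (or $2 \times 2$) block, which a direct calculation shows equals $-1$ in every case.

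Once $\det(L_{s_i}) = -1$ is verified for every $i$, the homomorphism property of $L$ forces $\det(L_\pi) = (-1)^{\ell(\pi)} = \mathrm{sgn}(\pi)$ for any reduced factorization $\pi = s_{i_1} \cdots s_{i_{\ell(\pi)}}$, and in particular $\det(L_\pi) \in \{\pm 1\}$. As a bonus this identifies $\det \circ L$ as the sign character, which may be useful elsewhere.

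There is no serious obstacle in this argument; the only point requiring care is keeping the row and column indexing straight at the boundaries $i = 1$ and $i = n$, and checking that the small block determinant really does come out to $-1$ rather than $+1$. An even shorter route, for readers willing to forgo the identification with the sign character, is to observe that $\det \circ L$ is a homomorphism from $S_{n+1}$ into the abelian group $\C^*$, hence factors through the abelianization $S_{n+1}^{\mathrm{ab}} \cong \Z/2\Z$; this alone forces $\det(L_\pi) \in \{\pm 1\}$ without identifying which sign occurs.
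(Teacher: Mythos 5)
Your proposal is correct: the row-by-row computation of $L_{s_i}$ is accurate (the only column differing from the identity is column $i$, with entries $1,-1,1$ in rows $i-1,i,i+1$, giving determinant $-1$), and combining this with the homomorphism property of Lemma \ref{l:represent} yields $\det(L_\pi)=\mathrm{sgn}(\pi)=\pm 1$; the abelianization shortcut is also valid. The paper itself only cites Martinez for this lemma, but your generator computation is essentially the same one the paper carries out later in the proof of Lemma \ref{l:coxeterrep}, so this is the intended argument.
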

Since steps are given by independent and identically distributed continuous random variables, the probability that an ordinal pattern $\pi$ occurs depends only on $\pi$ and the associated probability density function $f:\R \ra \R$. The associated joint density function for a walk of $n$ steps is always the function $g:\R^n \ra \R$ defined by $g(x_1,\ldots,x_n) = \prod_{i=1}^n f(x_i)$.
Suppose $f:\R \ra \R$ is a density function. Denote the probability that an ordinal pattern $\pi \in S_{n+1}$ occurs in a random walk of $n$ steps by $\PRB(f,\pi)$, where $f$ is the density function for the steps. Thus,
\begin{equation}
\label{e:density}
\PRB(f,\pi) = \int_{D_\pi} f(x_1) f(x_2) \cdots f(x_n) \; \D x_1 \D x_2 \cdots \D x_n.
\end{equation}
Since $L_\pi$ is invertible and linear, the change-of-variables theorem applies. Since the determinant of $L_\pi$ is $\pm 1$, the absolute value of the Jacobian is always $1$.
\begin{lemma}
\label{l:general}
Let $\pi \in S_{n+1}$. Let $f:\R \ra \R$ be a density function for a continuous distribution. Let $g:\R^n \ra \R$ be the joint density function for the independent steps produced by $f$ defined by $g(x_1,\ldots,x_n) = \prod_{i=1}^n f(x_i)$. Then
\[
\PRB(f,\pi) = \INT_{D_\pi} g (\bm{x}) \; \D \bm{x} = \INT_{\R_{> 0}^n} g(L_\pi(\bm{x})) \; \D \bm{x} = \INT_{\R_{> 0}^n} \prod_{i=1}^n f(L_\pi(\bm{x})_i) \; \D \bm{x}, 
\]
where $L_\pi(\bm{x})_i$ is the $i$-th coordinate of $L_\pi(\bm{x})$.
\end{lemma}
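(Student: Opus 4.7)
The plan is to unfold the three equalities in sequence, each essentially a bookkeeping step supported by a lemma already stated in the paper.

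First I would establish the leftmost equality, $\PRB(f,\pi) = \int_{D_\pi} g(\bm{x})\, \D\bm{x}$. This is immediate: equation~\eqref{e:density} gives $\PRB(f,\pi) = \int_{D_\pi} \prod_{i=1}^n f(x_i)\, \D x_1\cdots \D x_n$, and by the definition of the joint density $g(\bm{x}) = \prod_{i=1}^n f(x_i)$ this integrand is exactly $g(\bm{x})$. So the first equality is a definitional rewrite.

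For the middle equality I would apply the change-of-variables theorem to the linear map $L_\pi:\R^n \to \R^n$. By Lemma~\ref{l:image}, $L_\pi(\R_{>0}^n) = D_\pi$, and $L_\pi$ is invertible (it lies in $GL_n(\C)$, and in fact has real integer entries), so it is a bijection between the open sets $\R_{>0}^n$ and $D_\pi$ up to a measure-zero boundary that does not affect the integral. The Jacobian of a linear map is the constant $\det L_\pi$, and Lemma~\ref{l:det} gives $|\det L_\pi| = 1$. The change-of-variables formula then yields
\[
\int_{D_\pi} g(\bm{y})\, \D\bm{y} \;=\; \int_{\R_{>0}^n} g(L_\pi(\bm{x}))\, |\det L_\pi|\, \D\bm{x} \;=\; \int_{\R_{>0}^n} g(L_\pi(\bm{x}))\, \D\bm{x}.
\]
The rightmost equality is then obtained by unpacking $g$ one more time: $g(L_\pi(\bm{x})) = \prod_{i=1}^n f(L_\pi(\bm{x})_i)$, where $L_\pi(\bm{x})_i$ denotes the $i$-th coordinate of the image vector, as stated.

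I do not anticipate any real obstacle here; the lemma is a routine assembly of Lemma~\ref{l:image} (identifying the image region), Lemma~\ref{l:det} (keeping the Jacobian factor equal to $1$), and the definition of the joint density. The only minor care needed is to remark that the measure-zero set $Z$ on which $p$ is undefined, and the boundary $\partial \R_{>0}^n$, are both negligible for the Lebesgue integrals involved, so the bijection $L_\pi:\R_{>0}^n \to D_\pi$ can be used directly in the change-of-variables formula.
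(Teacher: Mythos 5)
Your argument is correct and matches the paper's proof, which likewise obtains the middle equality from Lemma \ref{l:image}, Lemma \ref{l:det}, and the change-of-variables theorem, with the outer equalities being definitional rewrites of $g$. Your added remark about the measure-zero sets is a harmless bit of extra care that the paper leaves implicit.
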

\begin{proof}
The second equality follows from Lemma \ref{l:image}, Lemma \ref{l:det}, and the change-of-variables theorem.
\end{proof}
\begin{example}
\label{ex:generalintegral}
Suppose $f$ is the density function of a continuous distribution. Lemma \ref{l:general} implies
\[
\PRB(f,2413) = \INT_0^\infty \INT_0^\infty \INT_0^\infty f(y + z) \cdot f(-x - y - z) \cdot f(x + y) \; \D x \; \D y \; \D z.
\]
\end{example}
In principle, Lemma \ref{l:general} allows for the calculation of $\PRB(f,\pi)$ for any suitable density function $f$. However, evaluation of the integral is probably computationally infeasible in general. The main reason to use the second integral of Lemma \ref{l:general} instead of the first is that the integration always occurs over $\R_{>0}^n$. 

Since all probability distributions in this paper are assumed to be continuous, the existence of a (not necessarily continuous) density function is guaranteed. Also, walk positions overlap with probability zero, which implies that ordinal pattern probabilities add to $1$. This is also true of many discrete distributions, but we do not address the discrete distribution case in this paper.

Ordinal pattern probabilities are invariant under changes in scale. If $\bm{x} \in D_\pi$, then $c \bm{x} \in D_\pi$ for any $c > 0$. Thus, the region of integration in \eqref{e:density} does not change under the substitution of $c\bm{x}$ for $\bm{x}$, which proves the next lemma. This property is called \emph{scale invariance}.
\begin{lemma}
\label{l:scale-invariance}
Let $f:\R \ra \R$ be a density function of a continuous probability distribution. Let $c \in \R_{>0}$ and let $h:\R \ra \R$ be defined by $h(x) = c f(cx)$. Then $h$ is also a density function and $\PRB(f,\pi) = \PRB(h,\pi)$ for all $\pi \in S_{n+1}$. 
\end{lemma}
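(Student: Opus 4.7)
The plan is to verify two separate claims: that $h$ is a genuine probability density function, and that plugging $h$ into the integral formula \eqref{e:density} gives the same value as $f$.

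First I would check that $h$ is a density. Since $c > 0$ and $f \geq 0$ pointwise, clearly $h \geq 0$. For the normalization, the substitution $u = cx$, $\D u = c \D x$, gives
\[
\INT_{\R} h(x) \, \D x = \INT_{\R} c f(cx) \, \D x = \INT_{\R} f(u) \, \D u = 1,
\]
so $h$ integrates to $1$. This step is routine.

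For the equality of probabilities, I would start from the definition
\[
\PRB(h,\pi) = \INT_{D_\pi} \PROD_{i=1}^n h(x_i) \, \D x_1 \cdots \D x_n = \INT_{D_\pi} c^n \PROD_{i=1}^n f(cx_i) \, \D x_1 \cdots \D x_n,
\]
and apply the linear change of variables $u_i = c x_i$. The Jacobian factor is $c^n$, which exactly cancels the $c^n$ already present in the integrand. The essential geometric fact, stated in the paragraph just before the lemma, is that $D_\pi$ is scale invariant: if $\bm{x} \in D_\pi$, then the walk positions generated by $c\bm{x}$ are simply $c$ times the walk positions generated by $\bm{x}$, hence they share the same ordinal pattern. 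Therefore the image of $D_\pi$ under $\bm{x} \mapsto c \bm{x}$ is again $D_\pi$, and the substitution yields
\[
\PRB(h,\pi) = \INT_{D_\pi} \PROD_{i=1}^n f(u_i) \, \D u_1 \cdots \D u_n = \PRB(f,\pi),
\]
as desired.

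There is no serious obstacle here; the argument is essentially bookkeeping on top of the scale invariance of $D_\pi$ observed immediately before the statement. The only point deserving a moment of care is recording that the scale factor $c^n$ arising from the definition of $h$ and the scale factor $c^{-n}$ arising from the Jacobian of the substitution are inverses of one another, so that no stray constants remain.
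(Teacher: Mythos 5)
Your argument is correct and is essentially the paper's own: the paper proves this lemma in the paragraph immediately preceding it by observing that $D_\pi$ is invariant under $\bm{x} \mapsto c\bm{x}$, so the substitution leaves the region of integration unchanged while the Jacobian $c^n$ cancels the $c^n$ in the definition of $h$. Your additional check that $h$ integrates to $1$ is routine and consistent with what the paper leaves implicit.
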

\section{Pattern probabilities when steps are from Laplace densities}
\label{s:laplace}
The Laplace distribution, also called the double exponential distribution, has density function $f:\R \ra \R$ given by
\[
f(x) = \frac{1}{2b}\EXP{-\frac{|x - \mu|}{b}},
\]
where $\mu$ is the mean and $b$ is a scale parameter. In this section, we restrict our attention to the mean zero case. A mean zero Laplace distribution arises as the distribution for a random variable expressed as the difference of two identically distributed exponential random variables. 

By Lemma \ref{l:scale-invariance}, ordinal pattern probabilities are scale invariant. Thus, we lose no generality by restricting our attention to the choice $b = 1$. For the remainder of the section, the density function $f$ is defined by 
\[
f(x) = \frac{1}{2}\EXP{-|x|}.
\]
\begin{definition}
\label{d:levelcount}
Let $\pi \in S_{n+1}$. Denote the number of $i \in [n]$ such that $\pi(i) \leq j < \pi(i+1)$ or $\pi(i+1) \leq j < \pi(i)$ by $\lev(\pi)_j$. We call the tuple $(\lev(\pi)_1,\ldots,\lev(\pi)_n)$ the \emph{level count of $\pi$}.  
\end{definition}
Note that $\lev(\pi)_j$ counts the number of times level $\ubar{j}$ is contained in an edge of the edge diagram of $\pi$. (See figure \ref{f:edgediagram}.) Alternativly, it is the sum of the absolute values of the entries in column $j$ of $L_\pi$.
\begin{example}
Let $\pi = 315624$. Then $\lev(\pi) = (2,4,3,2,2)$ as shown in Figure \ref{f:edgediagram}.
\end{example}
Recall from Lemma \ref{l:general} that $\PRB(f,\pi) = \int_{\R_{>0}^n} \prod_{i=1}^n f(L_\pi(\bm{x})_i) \D \bm{x}$.
\begin{theorem}
\label{t:laplace}
Let $\pi \in S_{n+1}$ and let $f$ be the density function for a mean zero Laplace distribution. Then
\[
\PRB(f,\pi) = \frac{1}{2^n \prod_{j=1}^n \lev(\pi)_j}.
\]
\end{theorem}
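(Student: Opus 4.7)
The plan is to plug the Laplace density into the integral formula from Lemma \ref{l:general} and exploit the fact that, for $\bm{x} \in \R_{>0}^n$, each coordinate $L_\pi(\bm{x})_i$ is a signed sum of positive terms whose signs all agree, so the absolute value $|L_\pi(\bm{x})_i|$ is literally the row-sum $\sum_j |(L_\pi)_{ij}| x_j$. This reduces everything to a product of one-variable exponential integrals.

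Concretely, I would first write
\[
\PRB(f,\pi) = \INT_{\R_{>0}^n} \prod_{i=1}^n \tfrac{1}{2} e^{-|L_\pi(\bm{x})_i|}\; \D \bm{x} = \frac{1}{2^n}\INT_{\R_{>0}^n} \exp\!\left(-\sum_{i=1}^n |L_\pi(\bm{x})_i|\right) \D \bm{x}.
\]
Then I would verify the key observation: by Definition \ref{d:represent}, each row of $L_\pi$ has all nonzero entries of one sign. Consequently, when $\bm{x} \in \R_{>0}^n$, the sum $L_\pi(\bm{x})_i = \sum_j (L_\pi)_{ij} x_j$ has all its nonzero summands of one sign, so
\[
|L_\pi(\bm{x})_i| = \sum_{j=1}^n |(L_\pi)_{ij}|\, x_j.
\]

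Next I would swap the order of summation and invoke Definition \ref{d:levelcount}, which identifies $\lev(\pi)_j$ with the sum of absolute values of the entries in column $j$ of $L_\pi$:
\[
\sum_{i=1}^n |L_\pi(\bm{x})_i| = \sum_{j=1}^n x_j \sum_{i=1}^n |(L_\pi)_{ij}| = \sum_{j=1}^n \lev(\pi)_j\, x_j.
\]
The integrand now factors, so Fubini gives
\[
\PRB(f,\pi) = \frac{1}{2^n}\prod_{j=1}^n \INT_0^\infty e^{-\lev(\pi)_j\, x_j}\; \D x_j = \frac{1}{2^n \prod_{j=1}^n \lev(\pi)_j},
\]
using that each $\lev(\pi)_j \geq 1$ (otherwise some column of $L_\pi$ would be zero, contradicting Lemma \ref{l:det}).

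There is no real obstacle here; the whole argument rests on the sign-consistency of each row of $L_\pi$, which is built into Definition \ref{d:represent}. The only thing I would double-check is the reinterpretation of the column-sums of $|L_\pi|$ as the level counts, which is the content of the remark following Definition \ref{d:levelcount} and is already highlighted in Figure \ref{f:edgediagram}.
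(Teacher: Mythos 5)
Your proposal is correct and follows essentially the same route as the paper: plug the Laplace density into Lemma \ref{l:general}, use the sign-consistency of each row of $L_\pi$ to drop the absolute values over $\R_{>0}^n$, regroup by column to identify the coefficient of $x_j$ as $\lev(\pi)_j$, and factor into one-variable exponential integrals. Your added check that $\lev(\pi)_j \geq 1$ (via invertibility of $L_\pi$) is a small point the paper leaves implicit, but otherwise the arguments coincide.
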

\begin{proof}
Every factor of the last integrand in Lemma \ref{l:general} has the form $f(\pm(x_a + \cdots + x_b))$, where each $x_k > 0$. Thus,
\[
f(\pm(x_a + \cdots + x_b)) = \EXP{-\left|\pm(x_a + \cdots + x_b)\right|} = \EXP{-(x_a + \cdots + x_b)}.
\]
The term $-x_j$ appears in the above sum whenever $\pi(i) \leq j < \pi(i+1)$ or $\pi(i+1) \leq j < \pi(i)$. 
Thus, by Definition \ref{d:levelcount}, there are $\lev(\pi)_j$ factors of $\prod_{i=1}^n f(L_\pi(\bm{x})_i)$ contributing $-x_j$ inside the exponential for the overall product. By Lemma \ref{l:general},
\begin{align*}
\PRB(f,\pi) &= \INT_{\R_{> 0}^n} \prod_{i=1}^n f(L_\pi(\bm{x})_i) \; \D \bm{x}\\
&= \INT_{\R_{> 0}^n} \frac{1}{2^n}\PROD_{j=1}^n \EXP{-\lev(\pi)_j \; x_j} \; \textrm{d} x_1 \cdots \textrm{d} x_n\\
&= \frac{1}{2^n} \PROD_{j = 1}^n \INT_0^{\infty} \EXP{-\lev(\pi)_j \; x_j} \; \textrm{d} x_j\\
&= \frac{1}{2^n \PROD_{j = 1}^n \lev(\pi)_j}. 
\end{align*}
\end{proof}
\section{Universal pattern probabilities for symmetric step densities} 
\label{s:universal}
Martinez \cite[Section 5]{martinez} introduced a hyperplane arrangement $\mathcal{D}_n$ in $\R^n$ such that for any $\pi \in S_{n+1}$, the set $D_\pi$ is a region of $\mathcal{D}_n$. Furthermore, in \cite[Lemma 5.1.2]{martinez} it was shown that the walls of $D_\pi$ are defined by the row vectors of $L_{\pi^{-1}}$. This allows us to show that for certain $\pi$, we may express $D_\pi$ as a union of cells of the type $B$ Coxeter arrangement, which establishes the main result of this section. Namely, permutations whose consecutive values are at most two positions apart have the same ordinal pattern probabilities as the Laplace distribution. Thus, when $\pi$ is such a permutation, we have $\PRB(f,\pi) = \frac{1}{2^n \PROD_{j=1}^n \lev(\pi)_j}$, \emph{regardless of choice of symmetric density function} for the steps in the random walk. 
\subsection{Hyperplane arrangement preliminaries, notation, and terminology}
\label{ss:hyperplaneterminology}
The hyperplane arrangement notation and terminology we use in this section is similar to that found in  \cite[Section 1.4]{abramenko-brown} or  \cite[Chapter 2]{bjorner-etal}. In particular, a \emph{hyperplane arrangement} is a set $\HYP = \{H_i\}_{i \in I}$ of finitely many hyperplanes. In this section, the arrangements under consideration are \emph{central}, which means they pass through the origin. Thus, associated to each $H_i$ is a linear function $f_i:\R^n \ra \R$ such that $H_i = \{\bm{x} \in \R^n :\;\; f_i(\bm{x}) = 0\}$. For each $H_i \in \HYP$, let
\begin{align*}
H_i^+ &:= \{\bm{x} \in \R^n : \;\;f_i(\bm{x}) > 0\};\\
H_i^0 &:= \{\bm{x} \in \R^n : \;\;f_i(\bm{x}) = 0\}; \text{ and}\\
H_i^- &:= \{\bm{x} \in \R^n : \;\;f_i(\bm{x}) < 0\}.
\end{align*}
A \emph{cell with respect to $\HYP$} is a nonempty set $C$ obtained by choosing for each $i \in I$ a sign $\sigma_i \in \{+,0,-\}$ such that $\bm{x} \in H_i^{\sigma_i}$ for all $\bm{x} \in C$. The sequence $(\sigma_1,\ldots,\sigma_n)$ is called the \emph{sign sequence for $C$}. The cell $C$ is represented by
\begin{equation}
\label{e:halfspaces}
C = \bigcap_{i \in I} H_i^{\sigma_i}.
\end{equation}
The intersection may be redundant. Cells such that $\sigma_i \neq 0$ for all $i \in I$ are called \emph{regions}. The regions of $\HYP$, denoted $\REG{\HYP}$, are the nonempty convex open subsets that partition $\R^n \setminus \cup_{i \in I} H_i$. Note that the collection of all cells partition $\R^n$. However, the cells that are not regions have measure zero and thus contribute nothing to the probability calculations of this section.
\subsection{A hyperplane arrangement for steps of a random walk}
\label{ss:braidarrangement}
Let $H_{i,j}$ be the hyperplane defined by
\[
H_{i,j} = \left\{(x_1,\ldots,x_n) \in \R^n : \;\; x_i + x_{i+1} + \cdots + x_{j-1} = 0\right\}.
\]
Let
\[
\CS_n := \left\{ H_{i,j} : \;\; i,j \in [n] \text{ and } i < j \right\}
\]
be the hyperplane arrangement defined in \cite[Section 5.1]{martinez}. 

As noted in \cite[Section 5.1]{martinez}, the arrangement $\CS_n$ is obtained from the standard braid arrangement via a linear substitution. The arrangements have the same face poset and the same number of regions. However, since the geometry is different and the calculation of $\PRB(f,\pi)$ is not always uniform across regions, we distinguish between the two arrangements in this paper. The next lemma motivates the choice of the arrangement $\CS_n$.
\begin{lemma}
\label{l:regions}
(Martinez \cite[Lemma 5.1.1]{martinez})
The set of regions of $\CS_n$ is $\{D_\pi : \;\;\pi \in S_{n+1}\}$.
\end{lemma}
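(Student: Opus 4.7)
The plan is to identify each hyperplane $H_{i,j} \in \CS_n$ with the locus where two walk positions coincide, and then to read the regions off directly from the map $p$. Since $z_k = x_1 + \cdots + x_{k-1}$, we have
\[
z_j(\bm{x}) - z_i(\bm{x}) = x_i + x_{i+1} + \cdots + x_{j-1}
\]
whenever $i < j$, so $H_{i,j} = \{\bm{x} \in \R^n : z_i(\bm{x}) = z_j(\bm{x})\}$. Consequently the complement $\R^n \setminus \bigcup_{H \in \CS_n} H$ is exactly the set $\R^n \setminus Z$ on which $p$ is defined in Section \ref{s:ordinal}. Moreover, $p(\bm{x})$ depends only on the signs of $z_j(\bm{x}) - z_i(\bm{x})$ for $i < j$, that is, only on the sign sequence of $\bm{x}$ with respect to $\CS_n$, so $p$ is constant on every region of $\CS_n$.

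With this observation in hand I would prove the lemma via two inclusions. First, because $p$ is constant on each region, every region of $\CS_n$ is contained in some $D_\pi$. For the opposite direction, Lemma \ref{l:image} gives $D_\pi = L_\pi(\R_{>0}^n)$; since $L_\pi$ is an invertible linear map and $\R_{>0}^n$ is nonempty, open, and convex, $D_\pi$ is a nonempty, open, connected subset of the complement of $\CS_n$. A connected subset of that complement must lie in a single region, so $D_\pi$ is contained in exactly one region of $\CS_n$. Combined with the first inclusion and the fact that the sets $D_\pi$ are pairwise disjoint (different permutations specify different orderings of $z_1,\ldots,z_{n+1}$), this forces each $D_\pi$ to be a full region and establishes $\REG{\CS_n} = \{D_\pi : \pi \in S_{n+1}\}$.

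No step here is genuinely subtle; the main thing to verify carefully is the bookkeeping that the indexing of $\CS_n$ does in fact produce every hyperplane $\{z_i = z_j\}$ with $1 \leq i < j \leq n+1$, so that the complement of the arrangement coincides with the domain of $p$. Once that is confirmed, the result follows by combining the sign-sequence observation with Lemma \ref{l:image} and the elementary fact that a nonempty open connected subset of the complement of a central hyperplane arrangement lies in a single region.
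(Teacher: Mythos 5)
Your proposal is correct, and since the paper itself offers no proof of this lemma (it is cited verbatim from Martinez's thesis), there is nothing to compare against except the reference; your argument is a reasonable reconstruction of the standard one. The two-inclusion structure is sound: $p$ is determined by the signs of the differences $z_j - z_i$, hence is constant on regions, so every region sits inside some $D_\pi$; and $D_\pi = L_\pi(\R_{>0}^n)$ is a nonempty open connected subset of the complement, hence sits inside a single region, so disjointness of the $D_\pi$ forces equality. One remark: the second inclusion can be shortened, since $D_\pi = p^{-1}(\pi)$ \emph{is} by definition the set of points with a prescribed everywhere-nonzero sign sequence, i.e.\ a region of $\CS_n$ as soon as it is nonempty, and Lemma \ref{l:image} is then needed only for nonemptiness rather than for connectedness. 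The bookkeeping point you flag at the end is genuinely the crux, and you are right to be suspicious: as printed, Definition of $\CS_n$ takes $i,j \in [n]$, which yields only $\binom{n}{2}$ hyperplanes and omits the loci $z_i = z_{n+1}$; with that literal reading the arrangement has $n!$ regions and the lemma is false. The intended range is $1 \leq i < j \leq n+1$ (consistent with Lemma \ref{l:walls}, where the subscripts $\pi^{-1}(i)$ take values in $[n+1]$), and with that correction your argument goes through completely.
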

We say a cell is a \emph{face} of the region $R$ if the cell's sign sequence matches $R$'s sign sequence except for one hyperplane $H$ whose sign is $0$. In this case, we say that $H$ is a \emph{wall} of $R$. For convenience, let $H_{j,i} = H_{i,j}$ when $j > i$. 
\begin{lemma} \label{l:walls} (Martinez \cite[Lemma 5.1.2]{martinez})
Let $\pi \in S_{n+1}$. The set of walls of $D_\pi$ is
\[ 
\left\{ H_{\pi^{-1}(i), \pi^{-1}(i + 1)} \in \CS_n: \;\;i \in [n]\right\}. 
\]
\end{lemma}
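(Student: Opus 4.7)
The plan is to exploit Lemma \ref{l:image} together with Lemma \ref{l:det} to transport the facet structure of the positive orthant $\R_{\geq 0}^n$ over to $\overline{D_\pi}$ via the linear isomorphism $L_\pi$. Lemma \ref{l:image} states $D_\pi = L_\pi(\R_{>0}^n)$, and Lemma \ref{l:det} ensures that $L_\pi$ is a linear bijection, so taking closures gives $\overline{D_\pi} = L_\pi(\R_{\geq 0}^n)$. The cone $\R_{\geq 0}^n$ is simplicial with exactly $n$ facets, $F_j := \{\bm y \in \R_{\geq 0}^n : y_j = 0\}$ for $j \in [n]$, each lying on the coordinate hyperplane $\{y_j = 0\}$. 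Because a linear isomorphism preserves the face lattice of a polyhedral cone, the facets of $\overline{D_\pi}$ are precisely $L_\pi(F_j)$ for $j \in [n]$, each supported on the image hyperplane $L_\pi(\{y_j = 0\})$.

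Next I would identify each supporting hyperplane explicitly as a member of $\CS_n$. Using Lemma \ref{l:represent}, which says that $L$ is a group homomorphism, I have $L_\pi^{-1} = L_{\pi^{-1}}$, and so
\[
L_\pi(\{y_j = 0\}) = \{\bm x \in \R^n : (L_{\pi^{-1}}\bm x)_j = 0\}.
\]
Applying Definition \ref{d:represent} to $\pi^{-1}$, the equation $(L_{\pi^{-1}}\bm x)_j = 0$ becomes $\pm(x_a + x_{a+1} + \cdots + x_{b-1}) = 0$, where $a = \min(\pi^{-1}(j), \pi^{-1}(j+1))$ and $b = \max(\pi^{-1}(j), \pi^{-1}(j+1))$. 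This cuts out exactly the hyperplane $H_{a,b}$, which equals $H_{\pi^{-1}(j), \pi^{-1}(j+1)}$ under the convention $H_{i,j} = H_{j,i}$. Relabeling the index $j$ as $i$ produces the claimed set.

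Finally, to confirm that each of these hyperplanes is genuinely a wall of $D_\pi$ rather than a redundant or lower-dimensional incidence, I would observe that face-lattice preservation under the linear bijection $L_\pi$ guarantees that $L_\pi(F_j)$ is a codimension-one boundary piece of $\overline{D_\pi}$, so the hyperplane supporting it must separate $D_\pi$ from an adjacent region of $\CS_n$ and hence qualifies as a wall in the arrangement sense. The main bookkeeping hurdle is the case $\pi^{-1}(j) > \pi^{-1}(j+1)$, where the sign in $L_{\pi^{-1}}$ flips; the $H_{i,j} = H_{j,i}$ convention absorbs this flip so that both orientations yield the same hyperplane and the two cases collapse to the single description in the lemma.
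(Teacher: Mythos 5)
Your argument is correct, and it is self-contained, whereas the paper itself offers no proof of this lemma: it is quoted verbatim from Martinez's thesis (\cite[Lemma 5.1.2]{martinez}), so there is nothing in the text to compare against line by line. The route you take --- transporting the facet structure of the simplicial cone $\R_{\geq 0}^n$ through the linear isomorphism $L_\pi$, using $L_\pi^{-1} = L_{\pi^{-1}}$ to read the supporting hyperplane of the $j$-th facet off the $j$-th row of $L_{\pi^{-1}}$, and letting the convention $H_{j,i}=H_{i,j}$ absorb the sign --- is the natural one and is essentially the mechanism the paper relies on when it says ``the walls of $D_\pi$ are defined by the row vectors of $L_{\pi^{-1}}$.'' Two small points deserve one more sentence each if you write this up. First, ``wall'' here is defined via sign sequences of cells of $\CS_n$, so to see that each $L_\pi(F_j)$ really yields a wall you should note that its relative interior is open in the hyperplane $H_{\pi^{-1}(j),\pi^{-1}(j+1)}$ and hence a generic point of it lies on no other hyperplane of $\CS_n$, giving a nonempty cell whose sign vector agrees with that of $D_\pi$ except for a single $0$. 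Second, the converse inclusion (no other hyperplane of $\CS_n$ is a wall) needs the observation that a wall-face is necessarily $(n-1)$-dimensional and sits inside $\overline{D_\pi}\cap H$, which has dimension at most $n-2$ unless $H$ supports one of the $n$ facets; your phrase ``the facets are precisely $L_\pi(F_j)$'' carries this but it is worth making explicit.
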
 
Suppose $\HYP$ and $\HYP'$ be hyperplane arrangements such that $\HYP \subseteq \HYP'$.  Then, the cells of $\HYP$ may be written as a union of cells of $\HYP'$. The next lemma follows from the fact that the collection of walls $\HYP_R$ of a region $R$ forms a hyperplane arrangement in its own right. We use this in Section \ref{ss:typeb} to express $D_\pi$ as a union of cells from the type $B$ Coxeter arrangement.
\begin{lemma}
\label{l:chamberunion}
Let $\HYP_R$ be the collection of walls for a region $R$ of a hyperplane arrangement $\HYP$. If $\HYP'$ is any hyperplane arrangement such that $\HYP_R \subseteq \HYP'$, then
\[
R = \bigcup_{j = 1}^k C_j,
\]
for some collection of cells $C_1, \ldots, C_k$ of $\HYP'$. 
\end{lemma}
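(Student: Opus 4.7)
The plan is to show that every cell of $\HYP'$ sits either entirely inside $R$ or entirely outside $R$, so that $R$ must be the finite union of those cells of $\HYP'$ contained in it.

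First I would establish the standard fact that a region is already cut out by its walls alone. That is, writing $\sigma_H \in \{+,-\}$ for the sign of $R$ with respect to each wall $H \in \HYP_R$,
\[
R = \bigcap_{H \in \HYP_R} H^{\sigma_H}.
\]
The inclusion $\subseteq$ is immediate from the definition of a region. For $\supseteq$, suppose $\bm{x}$ lies on the correct side of every wall but not in $R$. Pick a reference point $\bm{y} \in R$ and consider the segment from $\bm{y}$ to $\bm{x}$; since $\bm{x}\notin R$, the segment must first cross some hyperplane $H \in \HYP$, and one checks using the convexity of $\bar{R}$ that $H$ contributes a codimension-one face to $\bar{R}$, so $H \in \HYP_R$, contradicting that $\bm{x}$ is on the correct side of every wall.

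Next, fix any cell $C$ of $\HYP'$, with its (constant on $C$) sign sequence $(\sigma'_H)_{H \in \HYP'}$. Since $\HYP_R \subseteq \HYP'$, each $H \in \HYP_R$ assigns a well-defined sign $\sigma'_H$ to $C$. If $\sigma'_H = \sigma_H$ for every $H \in \HYP_R$, then every point of $C$ satisfies each of the defining strict inequalities of $R$ from the previous display, so $C \subseteq R$. Otherwise, there is some $H \in \HYP_R$ with $\sigma'_H \neq \sigma_H$, in which case every point of $C$ lies in $H$ or in $H^{-\sigma_H}$, both of which are disjoint from $R \subseteq H^{\sigma_H}$; hence $C \cap R = \emptyset$.

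Finally, since $\HYP'$ has finitely many cells, I would let $C_1,\ldots,C_k$ enumerate the cells whose sign sequence matches $\sigma_H$ on every $H \in \HYP_R$. The dichotomy above shows that each point of $R$ lies in exactly one of them, giving $R = \bigcup_{j=1}^k C_j$. The main obstacle is the preliminary fact that $R$ is already determined by its walls; this is a standard polyhedral fact that could alternatively be cited from a reference such as \cite{bjorner-etal}, but the short path argument sketched above makes it self-contained.
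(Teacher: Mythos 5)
Your proof is correct and follows essentially the same route the paper takes: the paper merely asserts that the lemma ``follows from the fact that the collection of walls $\HYP_R$ of a region $R$ forms a hyperplane arrangement in its own right'' (i.e., that $R$ is cut out by its walls as a region of $\HYP_R$), and your argument simply supplies the details of that standard fact together with the resulting containment/disjointness dichotomy for cells of $\HYP'$. The only point to polish is the segment argument for $R = \bigcap_{H \in \HYP_R} H^{\sigma_H}$, where the first crossing point must be arranged to lie on exactly one hyperplane (a generic choice of $\bm{y}$ suffices), but as you note this is a standard fact available in \cite{abramenko-brown} or \cite{bjorner-etal}.
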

\subsection{The type \texorpdfstring{$B$}{B} Coxeter arrangement}
\label{ss:typeb}
We represent a signed permutation on $[n]$ as a pair $(\omega,\epsilon)$, where $\omega \in S_n$ is a permutation on $[n]$ and $\epsilon \in \{-1,+1\}^n$ is a choice of sign for each position. A signed permutation $(\omega,\epsilon)$ acts on $\R^n$ by mapping $(x_1,\ldots,x_n)$ to $\left(\epsilon_1 x_{\omega(1)},\ldots,\epsilon_n x_{\omega(n)}\right)$.
The \emph{type $B$ Coxeter arrangement} is defined by the following hyperplanes:
\begin{align}
\label{e:ijplus}
X_{i,j,+} = \{ (x_1,\ldots,x_n) \in \R^n : x_i + x_j &= 0\};\\
\label{e:ijminus}
X_{i,j,-} = \{ (x_1,\ldots,x_n) \in \R^n : x_i - x_j &= 0\}; \text{ and}\\
\label{e:coords}
X_i = \{ (x_1,\ldots,x_n) \in \R^n : x_i &= 0\}, 
\end{align}
where $i,j \in [n]$ and $i < j$. It is known that the group $B_n$ of all signed permutations acts simply transitively on the regions of the type $B$ hyperplane arrangement, which implies that there are $2^n n!$ regions. See \cite[Section 7]{bjorner-wachs} or \cite[Section 1.15]{humphreys}, for example. Furthermore, the group $B_n$ is generated by reflections so that every group element can be represented as a matrix with determinant $\pm 1$.

For a symmetric density function $f:\R \ra \R$, we have $f(-x) = f(x)$ for all $x \in \R$. Let $g:\R^n \ra \R$ be the joint density function defined by $g(x_1,\ldots,x_n) = \prod_{i=1}^n f(x_i)$. Since $f$ is symmetric and products are invariant under permutations, we have $g(w \cdot \bm{x}) = g(\bm{x})$ for any signed permutation $w \in B_n$. 
\begin{lemma}
\label{l:probregions}
Let $f:\R \ra \R$ be a symmetric density function of a continuous probability distribution. Let $g:\R^n \ra \R$ be the joint density for the random walk of $n$ steps given by $g(x_1,\ldots,x_n) = \prod_{i=1}^n f(x_i)$. Then, for any signed permutation $w \in B_n$, and any region $R$ of the type $B$ hyperplane arrangement, we have
\[
\int_{R} g(\bm{x}) \D \bm{x} = \frac{1}{2^n n!}.
\]
\end{lemma}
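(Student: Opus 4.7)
The plan is to combine three ingredients already assembled in the section: the simply transitive action of $B_n$ on the $2^n n!$ regions of the type $B$ Coxeter arrangement, the $B_n$-invariance of the joint density $g$ coming from symmetry of $f$, and the fact that $g$ integrates to $1$ over $\R^n$.

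First I would fix a reference region, for instance the fundamental chamber $R_0 = \{x_1 > x_2 > \cdots > x_n > 0\}$. Because the hyperplanes \eqref{e:ijplus}, \eqref{e:ijminus}, and \eqref{e:coords} are exactly the reflecting hyperplanes of $B_n$, simple transitivity supplies a unique $w \in B_n$ with $R = w \cdot R_0$. Since $w$ is represented by a signed permutation matrix whose determinant is $\pm 1$, the change of variables $\bm{y} = w^{-1}\bm{x}$ carries $R$ bijectively onto $R_0$ with Jacobian of absolute value $1$.

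Next I would invoke the identity $g(w \cdot \bm{x}) = g(\bm{x})$ noted in the paragraph preceding the lemma, which holds because $f$ is symmetric and the product defining $g$ is invariant under the coordinate permutation and sign changes constituting $w$. Combining this with the change of variables gives
\[
\int_R g(\bm{x})\,\D\bm{x} = \int_{R_0} g(w \cdot \bm{y})\,\D\bm{y} = \int_{R_0} g(\bm{y})\,\D\bm{y},
\]
so all $2^n n!$ regional integrals share a common value $I$.

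Finally, the union of the hyperplanes in the type $B$ arrangement has Lebesgue measure zero, so the disjoint union of the open regions covers $\R^n$ up to a null set. Summing the equal regional integrals against $\int_{\R^n} g = 1$ yields $2^n n! \cdot I = 1$, hence $I = \frac{1}{2^n n!}$. There is no real obstacle here: the argument is a standard symmetry and transitivity computation, and the only routine subtlety is the measure-zero remark that lets the open regions be treated as a partition of $\R^n$ for integration purposes.
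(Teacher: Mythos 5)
Your proposal is correct and follows essentially the same route as the paper: simple transitivity of $B_n$ on the regions, the determinant-$\pm 1$ change of variables, and the $B_n$-invariance of $g$ to equate all regional integrals, then dividing the total mass $1$ by the $2^n n!$ regions. The only cosmetic difference is that you compare every region to a fixed fundamental chamber while the paper compares two arbitrary regions directly, and you spell out the measure-zero step that the paper leaves implicit.
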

\begin{proof}
Let $R_i$ and $R_j$ be arbitrary regions. Since $B_n$ acts simply transitively on regions, there exists $w \in B_n$ such that $w(R_i) = R_j$. Since the absolute value of the Jacobian for $w$ is $1$, the fact that $g(w \cdot \bm{x}) = g(\bm{x})$ and the change-of-variables theorem imply 
\[
\int_{R_i} g(\bm{x}) \D \bm{x} = \int_{R_i} g(w \cdot \bm{x}) \D \bm{x} = \int_{R_j} g(\bm{x}) \D \bm{x}.
\] 
Since there are $2^n n!$ regions, the result follows.
\end{proof}
Recall Lemma \ref{l:chamberunion}: If the walls of a region $R$ lie in an arrangement $\HYP'$ distinct from the one that defined $R$, then we can write $R$ as a union of cells from $\HYP'$. 
Thus, if the walls of a region $D_\pi \in \CS_n$ are type $B$ hyperplanes, the value $\PRB(f,\pi)$ can be calculated by counting type B regions contained in $D_\pi$.
\begin{lemma}
\label{l:countregions}
Suppose $f:\R \ra \R$ is a symmetric density function. Suppose the walls of $D_\pi$ are hyperplanes in the type $B$ Coxeter arrangement. Then
\[
\PRB(f,\pi) = \frac{1}{2^n \PROD_{i=1}^n \lev(\pi)_i}.
\]
\end{lemma}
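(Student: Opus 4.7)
The plan is to express $D_\pi$, up to a measure zero set, as a disjoint union of regions of the type $B$ Coxeter arrangement, integrate termwise using Lemma \ref{l:probregions}, and then pin down the number of regions by comparing against the known Laplace formula of Theorem \ref{t:laplace}.

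First I would invoke Lemma \ref{l:chamberunion}. By hypothesis, every wall of $D_\pi$ lies in the type $B$ Coxeter arrangement $\HYP'$, so Lemma \ref{l:chamberunion} yields a decomposition of $D_\pi$ as a union of cells of $\HYP'$. Any cell of $\HYP'$ that is not a region is contained in one of the defining hyperplanes and hence has Lebesgue measure zero, so only the full-dimensional cells contribute to the integral \eqref{e:density}. Let $R_1, \ldots, R_N$ denote the type $B$ regions of $\HYP'$ contained in $D_\pi$. Since $f$ is symmetric, Lemma \ref{l:probregions} applies to each $R_j$, and summing gives
\[
\PRB(f,\pi) = \INT_{D_\pi} g(\bm{x}) \, \D \bm{x} = \SUM_{j=1}^N \INT_{R_j} g(\bm{x}) \, \D \bm{x} = \frac{N}{2^n \, n!}.
\]

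Next I would observe that $N$ is a purely geometric quantity depending only on $\pi$ and on the combinatorics of $\HYP'$, and in particular does not depend on the choice of symmetric density $f$. To evaluate $N$, I apply the displayed identity to the mean zero Laplace density $f_L$, which is symmetric. By Theorem \ref{t:laplace},
\[
\frac{N}{2^n \, n!} = \PRB(f_L,\pi) = \frac{1}{2^n \PROD_{j=1}^n \lev(\pi)_j},
\]
which forces $N = \frac{n!}{\PROD_{j=1}^n \lev(\pi)_j}$. Substituting this value of $N$ back into the general formula for arbitrary symmetric $f$ produces the claimed expression.

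The only subtle point in this argument is justifying the passage from the cell decomposition supplied by Lemma \ref{l:chamberunion} to a decomposition of $D_\pi$ by full-dimensional type $B$ regions, but this reduces to the standard fact that the non-region cells of any hyperplane arrangement are contained in hyperplanes and thus contribute nothing to the Lebesgue integral. Everything else is an assembly of results already established: the wall identification from Lemma \ref{l:walls} provides the hypothesis on walls, Lemma \ref{l:probregions} equidistributes the integral across type $B$ regions, and Theorem \ref{t:laplace} supplies the one numerical computation needed to identify $N$.
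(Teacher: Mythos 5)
Your argument is correct and follows the paper's own proof essentially step for step: decompose $D_\pi$ into type $B$ cells via Lemma \ref{l:chamberunion}, discard the measure zero non-region cells, apply Lemma \ref{l:probregions} to get $\PRB(f,\pi) = N/(2^n n!)$, and identify $N$ by specializing to the Laplace density and invoking Theorem \ref{t:laplace}. No gaps; the observation that the region count $N$ is independent of the symmetric density $f$ is exactly the pivot the paper uses as well.
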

\begin{proof}
The hypothesis and Lemma \ref{l:chamberunion} imply that
\[
D_\pi = \bigcup_{i = 1}^k R_i \; \cup \; \bigcup_{j=1}^{\ell} C_j,
\]
where each $R_i$ is a region of the type $B$ Coxeter arrangement and each cell $C_j$ is a measure $0$ cell of the arrangement. Thus,
\[
\PRB(f,\pi) = \int_{D_\pi} f(x_1)\cdots f(x_n) \D x_1 \cdots \D x_n = \sum_{i=1}^k \int_{R_i} f(x_1)\cdots f(x_n) \D x_1 \cdots \D x_n.
\]
Since $f$ is symmetric, the joint density function $g(x_1,\ldots,x_n) = \prod_{i=1}^n f(x_i)$ is invariant under the action of $B_n$. 
By Lemma \ref{l:probregions}, the hypotheses imply $\PRB(f,\pi) = \frac{k}{2^n n!}$, where $k$ is the number of type $B$ regions contained in $D_\pi$. Since $k$ depends only on $\pi$, not on the choice of symmetric density function, we may choose $f$ to be the Laplace distribution. The result then follows from Theorem \ref{t:laplace}.
\end{proof}
\subsection{Almost consecutive permutations}
\label{ss:almostconsecutive}
It remains to identify the permutations $\pi \in S_{n+1}$ such that the walls of $D_\pi$ are hyperplanes of the type $B$ Coxeter arrangement. Recall from Lemma \ref{l:walls} that the set of walls for $D_\pi$ is the set of all $H_{\pi^{-1}(i),\pi^{-1}(i+1)}$ such that $i \in [n]$. In Lemma \ref{l:acwalls}, we show the walls of $D_\pi$ are type $B$ hyperplanes if $\pi$ is a permutation whose consecutive values occur no more than two positions apart in its $1$-line notation. These permutations (or their inverses) are called \emph{key permutations} in \cite{page} and \emph{3-determined permutations} in \cite{avgustinovich-kitaev}. In both papers it is shown that the counting sequence for these permutations, which is sequence \href{http://oeis.org/A003274/}{A003274} of the OEIS, grows asymptotically like $(1.4655\ldots)^n$.
\begin{definition}
\label{d:almostconsecutive}
We say $\pi \in S_{n+1}$ is \emph{almost consecutive} if $|\pi^{-1}(i+1) - \pi^{-1}(i)| \leq 2$ for all $i \in [n]$. 
\end{definition}
\begin{example}
Let $\pi = 1423$. Then $\pi$ is almost consecutive since all instances of consecutive values are at most two positions apart in the $1$-line notation. By contrast, the permutation $2413$ is not almost consecutive, since the values $2$ and $3$ are three positions apart.
\end{example}
\begin{lemma}
\label{l:acwalls}
Let $\pi \in S_{n+1}$ be an almost consecutive permutation. Then the walls of $D_\pi$ are hyperplanes of the type $B$ Coxeter arrangement.
\end{lemma}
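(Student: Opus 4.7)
The plan is to read the walls of $D_\pi$ off of Lemma \ref{l:walls} and then check, one index at a time, that each wall coincides with a hyperplane listed in \eqref{e:ijplus}--\eqref{e:coords}. By Lemma \ref{l:walls}, the walls of $D_\pi$ are exactly the hyperplanes $H_{\pi^{-1}(i), \pi^{-1}(i+1)}$ for $i \in [n]$, where we use the convention $H_{j,i} = H_{i,j}$ so that we may freely assume the first index is smaller than the second.

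First I would fix $i \in [n]$ and set $a = \min\{\pi^{-1}(i), \pi^{-1}(i+1)\}$ and $b = \max\{\pi^{-1}(i), \pi^{-1}(i+1)\}$, so that the corresponding wall is $H_{a,b} = \{\bm{x} : x_a + x_{a+1} + \cdots + x_{b-1} = 0\}$. Because $\pi$ is a permutation, $a \neq b$, and because $\pi$ is almost consecutive, Definition \ref{d:almostconsecutive} gives $b - a \in \{1,2\}$.

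Next I would split into the two possible values of $b - a$. If $b - a = 1$, then the defining equation for $H_{a,b}$ collapses to $x_a = 0$, so $H_{a,b}$ is the coordinate hyperplane $X_a$ from \eqref{e:coords}. If $b - a = 2$, the defining equation becomes $x_a + x_{a+1} = 0$, so $H_{a,b}$ is the hyperplane $X_{a, a+1, +}$ from \eqref{e:ijplus}. In either case the wall lies in the type $B$ Coxeter arrangement, and since $i$ was arbitrary, every wall of $D_\pi$ does.

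This argument is essentially just parsing definitions, so I do not expect any real obstacle. The only point that requires a small amount of care is the convention $H_{j,i} = H_{i,j}$, which allows us to always normalize to $a < b$ before unpacking the sum $x_a + \cdots + x_{b-1}$; without this normalization one would have to separately write down the mirror cases $\pi^{-1}(i) < \pi^{-1}(i+1)$ and $\pi^{-1}(i) > \pi^{-1}(i+1)$, but the resulting hyperplane is the same.
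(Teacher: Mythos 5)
Your proof is correct and follows the same route as the paper: apply Lemma \ref{l:walls}, use the almost consecutive condition to reduce each wall to the form $H_{k,k+1}$ or $H_{k,k+2}$, and identify these with the hyperplanes $x_k=0$ and $x_k+x_{k+1}=0$ of types \eqref{e:coords} and \eqref{e:ijplus}. Your explicit handling of the normalization $H_{j,i}=H_{i,j}$ is a nice touch but does not change the argument.
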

\begin{proof}
By Lemma \ref{l:walls}, the walls of $D_\pi$ are $H_{\pi^{-1}(i), \pi^{-1}(i+1)}$, where $i \in [n]$. Definition \ref{d:almostconsecutive} then implies that every wall of $D_\pi$ has the form $H_{k,k+1}$ or $H_{k,k+2}$ for some $k$. Thus, a given wall of $D_\pi$ is defined by an equation of the form $x_k = 0$ or $x_k + x_{k+1} = 0$, which is a hyperplane of the form~\eqref{e:coords} or~\eqref{e:ijplus}. In either case, a wall of $D_\pi$ is a hyperplane in the type $B$ Coxeter arrangement.
\end{proof}
\begin{theorem}
\label{t:acformula}
Let $\pi \in S_{n+1}$ be an almost consecutive permutation. Let $f:\R \ra \R$ be a symmetric density function. Then
\[
\PRB(f,\pi) = \frac{1}{2^n \PROD_{i=1}^n \lev(\pi)_i}.
\]
\end{theorem}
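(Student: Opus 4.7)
The plan is that this theorem is essentially a one-line corollary of the two preceding lemmas, so the work has already been done in the setup. The structure is: Lemma \ref{l:acwalls} verifies the hypothesis of Lemma \ref{l:countregions} for the class of almost consecutive permutations, and Lemma \ref{l:countregions} gives exactly the claimed formula.

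More concretely, first I would invoke Lemma \ref{l:acwalls}: since $\pi$ is almost consecutive, every wall of $D_\pi$ is a hyperplane $H_{k,k+1}$ or $H_{k,k+2}$, and these coincide with the coordinate hyperplanes $X_k$ (equation \eqref{e:coords}) or the positive type $B$ hyperplanes $X_{k,k+1,+}$ (equation \eqref{e:ijplus}). In particular, each wall of $D_\pi$ belongs to the type $B$ Coxeter arrangement. This is exactly the hypothesis needed to apply Lemma \ref{l:countregions}, whose conclusion is the desired formula
\[
\PRB(f,\pi) = \frac{1}{2^n \PROD_{i=1}^n \lev(\pi)_i}.
\]

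There is no real obstacle here, because the conceptual content has been absorbed into the two earlier lemmas. The substantive ideas, namely that (i) symmetric joint densities assign equal mass $1/(2^n n!)$ to each type $B$ region by the simply transitive action of $B_n$ (Lemma \ref{l:probregions}), (ii) Lemma \ref{l:chamberunion} allows $D_\pi$ to be decomposed into type $B$ cells once its walls are type $B$ hyperplanes, and (iii) the Laplace calculation of Theorem \ref{t:laplace} pins down the count of those cells as $n!/\prod_i \lev(\pi)_i$ via a density-free comparison, are all in place. The proof of the theorem itself therefore only needs to cite Lemma \ref{l:acwalls} and Lemma \ref{l:countregions} in sequence.
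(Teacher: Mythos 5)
Your proposal is correct and matches the paper's proof exactly: the paper also proves Theorem \ref{t:acformula} by citing Lemma \ref{l:acwalls} to verify that the walls of $D_\pi$ lie in the type $B$ Coxeter arrangement and then applying Lemma \ref{l:countregions}. Your additional commentary on how the earlier lemmas carry the conceptual weight is accurate and consistent with the paper's development.
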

\begin{proof}
The result follows from Lemma \ref{l:acwalls} and Lemma \ref{l:countregions}.
\end{proof}
\section{Uniform random walk patterns and Affine \texorpdfstring{$A$}{A}} \label{s:affineA}
Throughout this section, the only density function $f:\R \ra \R$ under consideration is the uniform density function on $[-1,1]$. It is defined by $f(x) = \frac{1}{2}$ for $x \in [-1,1]$ and $f(x) = 0$ otherwise. 

In Section \ref{ss:polytope}, we show that $\PRB(f,\pi)$ is related to the volume of a rational polytope derived from $\pi$. This rational polytope turns out to be an alcoved polytope, which is a union of the regions (called alcoves) of the {\typeA}. A lot is known about the {\typeA} and the {\affWeyl} that acts upon its regions. Thus, the early sections of the chapter are devoted to translating everything into the language of type $A$ root systems. The main result, Theorem \ref{t:weakordertheorem}, states that $\PRB(f,\pi)$ can be computed by counting the number of elements of a weak order interval of $\A_n$. 
\subsection{The polytope of steps that generate \texorpdfstring{$\pi$}{p}}
\label{ss:polytope}
We now define a rational polytope $P_\pi$ that is used to reduce the problem of calculating $\PRB(f,\pi)$ to the problem of calculating the volume of $P_\pi$. 
\begin{definition}
\label{d:rationalpolytope}
Let $\pi \in S_{n+1}$. Let $m_i = \mathrm{min}\{\pi(i),\pi(i+1)\}$ and $M_i = \mathrm{max}\{\pi(i),\pi(i+1)\}$. We call the rational polytope $P_\pi$ satisfying 
\begin{align}
\label{e:rationalcoords}
&x_i \geq 0,\\
\label{e:linearsystem}
0 \leq &x_{m_i} + \cdots + x_{M_i - 1} \leq 1,
\end{align}
for all $i \in [n]$ the \emph{polytope of steps for $\pi$}. 
\end{definition}
\begin{example}
\label{ex:linearsystem}
Let $\pi = 2413$. The system of inequalities defining $P_\pi$ is given by
\begin{align*}
0 \leq x_2 + x_3 &\leq 1,\\
0 \leq x_1 + x_2 + x_3 &\leq 1, \\
0 \leq x_1 + x_2 &\leq 1, \text{ and}\\
x_1,x_2,x_3  &\geq 0.
\end{align*}
\end{example}
Recall that Lemma \ref{l:general} expresses $\PRB(f,\pi)$ as $\int_{\R_{>0}^n} \prod_{i=1}^n f(L_\pi(\bm{x})_i) \D \bm{x}$. Also recall Definition \ref{d:represent}, which expresses the $i$-th coordinate of $L_\pi(x_1,\ldots,x_n)$ as
\[
L_\pi(x_1,\ldots,x_n)_i = 
\begin{cases} 
x_{\pi(i)} + \cdots + x_{\pi(i+1) - 1}, & \;\; \text{if } \pi(i) < \pi(i+1),\\
-(x_{\pi(i+1)} + \cdots + x_{\pi(i) - 1}), & \;\; \text{if } \pi(i+1) < \pi(i).
\end{cases} 
\]
\begin{lemma} 
\label{l:volumepolytope}
Let $f = \frac{1}{2} \Chi_{[-1,1]}$ be the uniform density function on $[-1,1]$. Let $\pi \in S_{n+1}$. Let $P_\pi$ be the polytope of steps for $\pi$. Then
\[
\PRB(f,\pi) = \frac{1}{2^n} \cdot \mathrm{volume}(P_\pi).
\]
\end{lemma}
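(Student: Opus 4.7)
The plan is to use Lemma \ref{l:general} to reduce $\PRB(f,\pi)$ to an integral over $\R_{>0}^n$, and then observe that for the specific density $f = \tfrac{1}{2}\Chi_{[-1,1]}$, the integrand is a constant $1/2^n$ on a set that exactly coincides (up to a measure-zero boundary) with the polytope $P_\pi$. The whole argument is really just an unwinding of the definitions, so the task is to check this identification cleanly.

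First, I would write
\[
\PRB(f,\pi) \;=\; \INT_{\R_{>0}^n} \prod_{i=1}^n f\bigl(L_\pi(\bm{x})_i\bigr) \; \D \bm{x}
\]
by Lemma \ref{l:general}. Since $f(y) = \tfrac{1}{2}$ for $y \in [-1,1]$ and $f(y) = 0$ otherwise, the integrand equals $1/2^n$ at those $\bm{x}$ satisfying $|L_\pi(\bm{x})_i| \leq 1$ for every $i \in [n]$, and vanishes elsewhere. Hence
\[
\PRB(f,\pi) \;=\; \frac{1}{2^n} \cdot \mathrm{vol}\bigl(\{\bm{x} \in \R_{>0}^n : |L_\pi(\bm{x})_i| \leq 1 \text{ for all } i\}\bigr).
\]

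Next I would match that region with $P_\pi$. By Definition \ref{d:represent}, with $m_i = \min\{\pi(i),\pi(i+1)\}$ and $M_i = \max\{\pi(i),\pi(i+1)\}$, one has $L_\pi(\bm{x})_i = \pm(x_{m_i} + \cdots + x_{M_i - 1})$. For $\bm{x} \in \R_{>0}^n$ the sum $x_{m_i} + \cdots + x_{M_i - 1}$ is nonnegative, so $|L_\pi(\bm{x})_i| \leq 1$ is equivalent to $0 \leq x_{m_i} + \cdots + x_{M_i - 1} \leq 1$. Combining this with $x_j \geq 0$ for all $j$ recovers exactly the defining inequalities \eqref{e:rationalcoords} and \eqref{e:linearsystem} of $P_\pi$. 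The difference between $\R_{>0}^n$ and $\R_{\geq 0}^n$ is measure zero, so the integration region equals $P_\pi$ up to a null set, giving the claimed formula.

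There is no real obstacle here; the only point to be careful about is that the nonnegativity constraints built into $\R_{>0}^n$ make the lower bound $0 \leq x_{m_i} + \cdots + x_{M_i-1}$ in Definition~\ref{d:rationalpolytope} automatic rather than an additional constraint, which is what ensures the region of positive integrand coincides with $P_\pi$ and not with some strictly smaller set.
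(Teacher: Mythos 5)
Your proof is correct and follows essentially the same route as the paper's: apply Lemma \ref{l:general}, pull out the constant $1/2^n$, and identify the support of the integrand with the defining inequalities of $P_\pi$ via the explicit form of $L_\pi(\bm{x})_i$. Your added remarks on the measure-zero boundary and on the lower bound being automatic for $\bm{x} \in \R_{>0}^n$ are fine refinements of the same argument.
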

\begin{proof}
Let $m_i = \mathrm{min}\{\pi(i),\pi(i+1)\}$ and $M_i = \mathrm{max}\{\pi(i),\pi(i+1)\}$.
By Lemma \ref{l:general}, we have
\begin{equation}
\label{e:integralpolytope}
\PRB(f,\pi) = \INT_{\R_{>0}^n} \prod_{i=1}^n \frac{1}{2} \Chi_{[-1,1]}(L_\pi(\bm{x})_i) \; \D \bm{x} = \frac{1}{2^n} \INT_{\R_{>0}^n} \prod_{i=1}^n \Chi_{[-1,1]}(L_\pi(\bm{x})_i) \;\D \bm{x}.
\end{equation}
Let $\bm{x} = (x_1,\ldots,x_n) \in \R_{\geq 0}^n$. Then $x_j \geq 0$ for all $j \in [n]$. Thus $L_\pi(x_1,\ldots,x_n)_i \in [-1,1]$ if and only if
\[
0 \leq x_{m_i} + \cdots + x_{M_i - 1} \leq 1.
\]
The last integrand of \eqref{e:integralpolytope} is $1$ if the system of inequalities defining $P_\pi$ in Definition \ref{d:rationalpolytope} is satisfied, and $0$ otherwise. Thus, the last integral of \eqref{e:integralpolytope} calculates the volume of $P_\pi$.
\end{proof}
\begin{remark}
A consequence of the coordinate inequalities $x_i \geq 0$ and those that have the form $0 \leq x_{m_i} + \cdots + x_{M_i - 1} \leq 1$ is that $x_a + \cdots + x_b \leq 1$ for any $a,b \in [m_i, M_i)$.
In particular, it is a consequence of Lemma \ref{l:existcoordinate} that $x_i \leq 1$ for all $i \in [n]$, which implies $P_\pi \subseteq [0,1]^n$.
\end{remark}
\subsection{Type \texorpdfstring{$A$}{A} root system preliminaries}
Let $\epsilon_1,\ldots,\epsilon_{n+1}$ be the standard basis of $\R^{n+1}$. Let $(\cdot,\cdot)$ be the standard inner product on $\R^{n+1}$. Let
\[ 
V = \left\{\lambda \in \R^{n+1} : \; \;(\lambda, \epsilon_1 + \cdots + \epsilon_{n+1}) = 0 \right\}.
\]
The set
\[
\Phi = \left\{\epsilon_i - \epsilon_j \in V : \; i,j \in [n+1] \text{ and } i < j\right\}
\]
is called the \emph{root system of type $A_n$}. The sets
\begin{align*}
\Phi^+ &= \{\epsilon_i - \epsilon_j \in V : \; i,j \in [n+1] \text{ and } i < j\} \text{ and}\\
\Phi^- &= \{-\lambda \in V : \; \lambda \in \Phi^+\},
\end{align*}
respectively, are called the set of \emph{positive roots} and the set of \emph{negative roots}, respectively. 
\begin{notation*}
We often abbreviate $\epsilon_i - \epsilon_j \in \Phi^+$ by $(i,j) \in \Phi^+$.
\end{notation*}
Let $\alpha_i = \epsilon_i - \epsilon_{i+1}$. Then
\[
\Delta = \{ \epsilon_i - \epsilon_{i+1} \in V: \;\; i \in [n]\} = \{ \alpha_i \in V: \;\; i \in [n]\}
\]
is a basis for $V$. The vectors $\alpha_1,\ldots,\alpha_n$ contained in $\Delta$ are called \emph{simple roots}. There is a dual basis to $\Delta$ consisting of vectors $\omega_1,\ldots,\omega_n$ satisfying $(\omega_i, \alpha_j) = \delta_{ij}$. The dual basis is called the basis of \emph{fundamental coweights}.

The \emph{\Weyl} is the group generated by reflections about the hyperplanes orthogonal to the simple roots. Explicitly, the reflection $s_i$ about the hyperplane orthogonal to $\alpha_i$ is given by
\begin{equation}
\label{e:reflection}
s_i (\lambda) = \lambda - (\lambda, \alpha_i) \alpha_i.
\end{equation}
The map that sends the adjacent transposition $(i \;\; i+1) \in S_{n+1}$ to the reflection $s_i \in A_n$ is called the \emph{geometric representation}. It is a faithful representation of the symmetric group as a Coxeter group. See \cite[Section 4.2]{bjorner-brenti2}, for example.

The representation $L$ given in Definition \ref{d:represent} is closely related to the geometric representation of $S_{n+1}$ as the {\Weyl}.
\begin{lemma}
\label{l:matrixgeometric}
The matrix representation of $s_i$ in the basis of simple roots is $I + M$, where $I$ is the identity matrix, and $M$ is the matrix whose only nonzero entries $A$ are given by $M_{i,i-1} = 1$, $M_{i,i} = -2$, and $M_{i,i+1} = 1$.
\end{lemma}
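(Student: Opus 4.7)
The plan is a direct computation using the reflection formula \eqref{e:reflection}. To determine the matrix of $s_i$ in the basis $\Delta = \{\alpha_1, \ldots, \alpha_n\}$, I would evaluate $s_i(\alpha_j)$ for each $j \in [n]$ and re-express each image in terms of the simple roots; the resulting coefficients then populate the $j$-th column of the matrix.

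First I would record the relevant Gram pairings. Using $\alpha_k = \epsilon_k - \epsilon_{k+1}$ together with the standard inner product on $\R^{n+1}$, one computes $(\alpha_i, \alpha_i) = 2$, $(\alpha_i, \alpha_j) = -1$ when $|i-j| = 1$, and $(\alpha_i, \alpha_j) = 0$ when $|i - j| \geq 2$. These are the off-diagonal entries of the Cartan matrix of type $A_n$. Substituting into $s_i(\alpha_j) = \alpha_j - (\alpha_j, \alpha_i)\alpha_i$ yields $s_i(\alpha_i) = -\alpha_i$, and $s_i(\alpha_{i-1}) = \alpha_{i-1} + \alpha_i$ when $i \geq 2$, and $s_i(\alpha_{i+1}) = \alpha_{i+1} + \alpha_i$ when $i \leq n - 1$, and $s_i(\alpha_j) = \alpha_j$ otherwise.

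Finally I would read off the columns. For $j \notin \{i-1, i, i+1\}$, column $j$ of $[s_i]_\Delta$ equals the $j$-th standard basis vector, which agrees with column $j$ of $I$, so $M$ vanishes in these columns. Column $i$ consists of a single $-1$ in row $i$, which differs from column $i$ of $I$ by subtracting $2$ in row $i$, giving $M_{i,i} = -2$. Columns $i-1$ and $i+1$ agree with $I$ except for an extra $+1$ in row $i$, giving $M_{i,i-1} = 1$ and $M_{i,i+1} = 1$. The boundary cases $i = 1$ and $i = n$ fall out of the same calculation: the would-be column $i-1$ or $i+1$ simply does not appear, so the corresponding entry of $M$ is absent.

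There is no substantive obstacle; the entire content is the Cartan-matrix computation. The only thing to be careful about is the convention that the $j$-th column of the matrix is the coordinate vector of the image of $\alpha_j$, so that the $+1$ contributions from $s_i(\alpha_{i\pm 1})$ land in row $i$ rather than in row $i \pm 1$, which is exactly what produces the off-diagonal pattern in $M$.
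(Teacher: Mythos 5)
Your proposal is correct and is essentially the paper's own argument: the paper simply asserts that the lemma "follows directly from \eqref{e:reflection}" and cites the proof of Proposition 4.2.1 in Bj\"orner--Brenti, which is exactly the Cartan-pairing computation you carry out explicitly. Your column-convention remark (that the image of $\alpha_j$ fills column $j$, so all the corrections land in row $i$) is the right point of care and matches the indexing $M_{i,i-1}=1$, $M_{i,i}=-2$, $M_{i,i+1}=1$ in the statement.
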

\begin{proof}
This follows directly from \eqref{e:reflection} and appears in the proof of \cite[Proposition 4.2.1]{bjorner-brenti2}.
\end{proof}
\begin{lemma}
\label{l:coxeterrep}
Let $\pi \in S_{n+1}$. The matrix representation of $\pi$ in the basis of simple roots is $L_\pi^T$. Consequently, the matrix $L_\pi$ is the matrix representation of $\pi$ in the basis of fundamental coweights.
\end{lemma}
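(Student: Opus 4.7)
The plan is to check both identities on the Coxeter generators $s_1,\ldots,s_n$ and then extend to arbitrary $\pi \in S_{n+1}$ by multiplicativity, using Lemma~\ref{l:represent} together with the fact that the geometric representation is itself a group homomorphism.

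First I would compute $L_{s_i}$ directly from Definition~\ref{d:represent}. Since $s_i$ fixes every index outside $\{i,i+1\}$ and transposes these two, only the three rows indexed by $i-1, i, i+1$ differ from the identity: row $i-1$ has entries $1,1$ in columns $i-1,i$; row $i$ has a single entry $-1$ in column $i$; row $i+1$ has entries $1,1$ in columns $i,i+1$ (with the obvious truncations when $i=1$ or $i=n$). Transposing, $L_{s_i}^T$ is the identity outside row $i$, and in row $i$ the entries at columns $i-1,i,i+1$ are $1,-1,1$. This matches $I+M$ from Lemma~\ref{l:matrixgeometric} exactly, so the first claim holds for $\pi = s_i$.

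Next, I would independently verify the second claim on generators using the reflection formula \eqref{e:reflection}. For $j\neq i$, one has $s_i(\omega_j)=\omega_j$ since $(\omega_j,\alpha_i)=0$, so the $j$-th column of $[s_i]_\omega$ is the standard basis vector. For $j=i$, we get $s_i(\omega_i)=\omega_i-\alpha_i$; expressing $\alpha_i=-\omega_{i-1}+2\omega_i-\omega_{i+1}$ via the Cartan matrix (with $\omega_0=\omega_{n+1}=0$) yields $s_i(\omega_i)=\omega_{i-1}-\omega_i+\omega_{i+1}$. Matching columns with the explicit form of $L_{s_i}$ above shows $[s_i]_\omega = L_{s_i}$.

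Having verified both identities for the generators, I would extend to arbitrary $\pi$ by taking a reduced expression $\pi=s_{i_1}\cdots s_{i_k}$ and applying the homomorphism property of $L$ (Lemma~\ref{l:represent}) alongside the fact that the geometric representation is a group homomorphism of $S_{n+1}$ into $GL(V)$. Both sides of each identity are then expressed as the same product of generator matrices, yielding $L_\pi^T = [\pi]_\Delta$ and $L_\pi = [\pi]_\omega$ for all $\pi$.

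The main obstacle I expect is bookkeeping with the composition convention on $S_{n+1}$: depending on whether $\pi\tau$ means applying $\pi$ first or $\tau$ first, the matrix product order in the expansion step must be chosen compatibly with Lemma~\ref{l:represent} and with the geometric representation. Once the convention is fixed so that Lemma~\ref{l:represent} holds as written, the extension from generators is routine; the duality observation that $\{\omega_i\}$ is dual to $\{\alpha_i\}$ under the inner product then makes transparent why the simple root and coweight matrices are related by transposition for each reflection, and hence for the entire group.
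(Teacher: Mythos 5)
Your proof is correct and follows essentially the same route as the paper: reduce to the adjacent transpositions via the homomorphism property of $L$ (Lemma \ref{l:represent}), compute $L_{s_i}$ row by row from Definition \ref{d:represent}, and identify $L_{s_i}^T$ with the matrix $I+M$ of Lemma \ref{l:matrixgeometric}. Your extra direct verification of the coweight statement on generators (via $s_i(\omega_i)=\omega_i-\alpha_i$ and the Cartan-matrix expansion of $\alpha_i$) goes slightly beyond the paper, which records that claim only as a consequence, and your caution about the composition convention when passing from generators to general $\pi$ is well placed, since $\pi\mapsto L_\pi^T$ is a priori an anti-homomorphism.
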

\begin{proof}
Recall from Lemma \ref{l:represent} that the function $L:S_{n+1} \ra GL_n(\R)$ that maps $\pi$ to $L_\pi$ is a representation. Thus, it suffices to check the result for the adjacent transpositions. 

Let $\pi$ be the adjacent transposition $(i \;\; i+1)$. We may exhaustively check that $L_\pi^T$ is the geometric representation given in Lemma \ref{l:matrixgeometric}. 

Note that $\pi(j) = j$ and $\pi(j+1) = j + 1$ except for $j \in \{i-1,i,i+1\}$. Thus all rows of $L_\pi$ match the identity matrix except rows $i-1$, $i$, and $i+1$. 

Since $\pi(i-1) = i-1$, and $\pi(i) = i + 1$, the $(i-1)$-st row of $L_\pi$, if it exists, has a $1$ in columns $i-1$ and $i$ and $0$'s in all other positions. Similarly, if row $i+1$ exists, there is a $1$ in columns $i$ and $i+1$ and $0$'s in all other positions. Since $\pi(i) = i+1$ and $\pi(i) = i$, the only nonzero entry of row $i$ is a $-1$ in column $i$.

In summary, we may wite $L_\pi$ as $I + N$, where the only nonzero entries of $N$ are given by $N_{i-1,i} = 1$, $N_{i,i} = -2$, and $N_{i+1,i} = 1$. This is the transpose of the matrix for the geometric representation given in Lemma \ref{l:matrixgeometric}.
\end{proof}
\subsection{The affine arrangement of type \texorpdfstring{$A_n$}{An} in step coordinates}
The definition of the {\typeA} and its connected components involve inner products of the form $(\lambda, \epsilon_i - \epsilon_j)$. Note that $\lambda$, expressed in the basis of fundamental coweights as $x_1 \omega_1 + \cdots + x_n \omega_n$, satisfies
\begin{align*}
(\lambda, \epsilon_i - \epsilon_j) &= (x_1 \omega_1 + \cdots + x_n \omega_n\; , \; \alpha_i + \cdots + \alpha_{j-1})\\
&= x_i + \cdots + x_{j-1}.
\end{align*}
The linear isomorphism mapping $x_1 \omega_1 + \cdots + x_n \omega_n$ to $(x_1,\ldots,x_n)$ translates results about the {\typeA} to results about $P_\pi$. We refer to the image $(x_1,\ldots,x_n) \in \R^n$ of this isomorphism as \emph{step coordinates} in reference to the steps of the random walk. Whenever it makes sense, we expand the standard results and definitions about the {\typeA} into the basis $\omega_1,\ldots,\omega_n$ in anticipation of what is needed to calculate the volume of $P_\pi$.
\begin{definition}
\label{d:consecutiveaffine}
Let $(i,j) \in \Phi^+$ and $a \in \Z$. Let
\begin{align*}
H_{ij}^a &= \{\lambda \in V : (\lambda, \epsilon_i - \epsilon_j) = a\}\\
&= \{x_1 \omega_1 + \cdots + x_n \omega_n \in V : x_i + \cdots + x_{j-1} = a\}.
\end{align*}
The collection of all hyperplanes of the form $H_{ij}^a$ is called the \emph{\typeA}. The connected components of $V \setminus \cup H_{ij}^a$ are called \emph{alcoves}. The group generated by the set of reflections about hyperplanes of the form $H_{ij}^a$ is the \emph{\affWeyl}.
\end{definition}
Let $\alc$ be an alcove of the affine walk arrangement. For any $\lambda \in \alc$, and any pair $(i,j) \in \Phi^+$, Definition \ref{d:consecutiveaffine} implies the existence of an  integer $k_\alc (i,j)$ such that $(\lambda, \epsilon_i - \epsilon_j)$ is strictly between $k_\alc (i,j)$ and $k_\alc (i,j) + 1$. 
\begin{definition}
\label{d:address}
Let $\alc$ be an alcove of the affine walk arrangement. Let $\Phi^+$ be the set of positive roots. The function $k_\alc : \Phi^+ \ra \Z$ such that
\begin{align*}
\alc &= \left\{ \lambda \in V : \; \;
k_\alc (i,j) < (\lambda, \epsilon_i - \epsilon_j) < k_\alc (i,j) + 1 \text{ for all } (i,j) \in \Phi^+ \right\}\\
&= \left\{ x_1 \omega_1 + \cdots + x_n \omega_n  \in V: \;\; k_\alc (i,j) < x_i + \cdots + x_{j-1} < k_\alc (i,j) + 1 \text{ for all } (i,j) \in \Phi^+\right\}
\end{align*}
is called the \emph{address of $\alc$}. The alcove
\begin{align*}
\alc_{\circ} &= \left\{\lambda \in V : \;\; 0 < (\lambda,\epsilon_i - \epsilon_j) < 1 \textrm{ for all } (i,j) \in \Phi^+ \right\}\\
&= \left\{x_1 \omega_1 + \cdots + x_n\omega_n \in V : \;\; 0 < x_i + \cdots + x_{j-1} < 1 \textrm{ for all } (i,j) \in \Phi^+ \right\}
\end{align*}
is called the \emph{fundamental alcove}. 
\end{definition}
Thus, the fundamental alcove is the unique alcove whose address is the constant zero function from $\Phi^+$ to $\Z$. 
\begin{example}
\label{ex:alcove}
Every point in the unit hypercube that is not in the measure zero union of hyperplanes of the affine walk arrangement lies in some alcove. For example, the point $(0.2, 0.05, 0.8, 0.4, 0.7)$ in step coordinates is in the alcove $\alc$ whose address is shown in Figure \ref{f:alcove}.
\end{example}
\begin{figure}[htb] 
	\centering
		\begin{tikzpicture}[scale=.25]
		    \foreach \i in {1,2,3,4,5}   {
		        \pgfmathsetmacro{\a}{int(\i + 1)}
		        \foreach \j in {\a,...,6} {
		            \draw (2*\i,2*\j) node{$\i \j$};
		        }   
		    }
		    \draw (14.5,9.5) node{$k_\alc$};
		    \draw[very thick,->] (13,7) .. controls (14,9.5) and (15,6) .. (17,7);
		    \draw (20, 4) node{$0$};
		    \draw (20, 6) node{$0$};
		    \draw (20, 8) node{$1$};
		    \draw (20, 10) node{$1$};
		    \draw (20, 12) node{$2$};
		    \draw (22, 6) node{$0$};
		    \draw (22, 8) node{$0$};
		    \draw (22, 10) node{$1$};
		    \draw (22, 12) node{$1$};
		    \draw (24, 8) node{$0$};
		    \draw (24, 10) node{$1$};
		    \draw (24, 12) node{$1$};
		    \draw (26, 10) node{$0$};
		    \draw (26, 12) node{$1$};
		    \draw (28, 12) node{$0$};
		\end{tikzpicture}
		\caption{The address of the alcove $\alc$ containing the point in Example \ref{ex:alcove}.}
		\label{f:alcove}
\end{figure}
The group $\A_n$ has generating set $s_1,\ldots,s_{n+1}$, where $s_1,\ldots,s_n$ are the same generators from $A_n$ that reflect about the hyperplanes $H_{i \; i+1}^0$. The generator $s_{n+1}$ reflects about the hyperplane $H_{1  n}^1$. Thus, the action of $s_i$ is to swap the $i$-th and $(i+1)$-st coordinates of elements of $V$. The action of $s_{n+1}$ is to swap the first and last coordinates, add one to the first coordinate and subtract one from the last coordinate. See \cite[page 86]{shi1} or \cite[Section 4.3]{humphreys}, for example.

The first part of the next lemma provides a correspondence between the the group $\A_n$ and the alcoves of the {\typeA}. The last part provides the link to calculating the volume of $P_\pi$. Recall that Lemma \ref{l:coxeterrep} identifies $L_\pi^T$ as the matrix representing $\pi$ in the geometric representation. Also recall from Lemma \ref{l:det} that the determinant of $L_\pi$ is $\pm 1$.
\begin{lemma}
The following are true about the {\affWeyl}.
\label{l:affineprelims} 
\begin{enumerate}[(i)]
\item
The {\affWeyl} acts simply transitively on the alcoves of the {\typeA}. 
\item 
Every element of $\A_n$ is a product of an element of $A_n$ and a translation. 
\item
Elements of $\A_n$ acting on step coordinates are volume-preserving on $\R^n$ relative to the standard inner product on $\R^n$ and Lebesgue measure.  
\end{enumerate}
\end{lemma}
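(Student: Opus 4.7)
The plan is to handle the three parts in order, using the standard structure theory of affine Weyl groups in type $A$.

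For part (i), I would invoke the classical theorem that the {\affWeyl} acts simply transitively on the alcoves, with the fundamental alcove $\alc_\circ$ serving as a strict fundamental domain (see Humphreys, Chapter 4, or Shi). Transitivity follows because any two alcoves are connected by a gallery crossing one wall at a time, and each wall-crossing is realized by a reflection lying in $\A_n$. Freeness follows because the interior of $\alc_\circ$ is disjoint from every reflecting hyperplane of the arrangement, so no nontrivial reflection-group element can fix any point of $\alc_\circ$.

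For part (ii), I would exhibit $\A_n$ as a semidirect product $A_n \ltimes T$, where $T$ is the lattice of translations contained in $\A_n$. Concretely, the affine generator $s_{n+1}$ reflects about $H_{1,n+1}^1$, while the linear reflection $s'$ about $H_{1,n+1}^0$ lies in $A_n$ (it is the reflection associated to the highest root $\epsilon_1 - \epsilon_{n+1}$). Their product $s_{n+1} s'$ is a translation by a nonzero multiple of $\epsilon_1 - \epsilon_{n+1}$. Closing this translation under conjugation by $A_n$ produces the full coroot lattice; since $\A_n$ is generated by $A_n$ together with $s_{n+1}$, every element of $\A_n$ can then be written as a finite Weyl element composed with a translation.

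For part (iii), I would combine (ii) with Lemma \ref{l:coxeterrep} and Lemma \ref{l:det}. Any $g \in \A_n$ factors as $g = \pi t$ with $\pi \in A_n$ and $t$ a translation. In step coordinates, $\pi$ acts as the matrix $L_\pi$, whose determinant is $\pm 1$ by Lemma \ref{l:det}, so its Jacobian has absolute value $1$; the translation $t$ has Jacobian $1$ trivially. The composite therefore preserves Lebesgue measure on $\R^n$.

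The main obstacle is the precise identification of the translation subgroup $T$ in part (ii): producing a single nontrivial translation from $s_{n+1} s'$ is straightforward, but confirming that $T$ is exactly the coroot lattice, and that $A_n \cdot T$ exhausts $\A_n$, requires the standard argument tracking how $\A_n$ acts on the set of alcoves. Since all three statements are well-established in the literature on affine Coxeter groups, I would proceed largely by citation rather than re-derivation.
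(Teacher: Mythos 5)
Your proposal is correct and matches the paper's proof: the paper likewise disposes of (i) and (ii) by citation to Humphreys (Theorem 4.5 and Proposition 4.2, respectively) and proves (iii) by combining the translation-times-finite-Weyl decomposition of (ii) with Lemma \ref{l:coxeterrep} and the determinant $\pm 1$ fact of Lemma \ref{l:det}. The extra detail you sketch for (i) and (ii) is the standard argument behind those citations, so nothing differs in substance.
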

\begin{proof}
Part (i) is \cite[Theorem 4.5]{humphreys}. Part (ii) is \cite[Proposition 4.2]{humphreys}. 

Lemma \ref{l:coxeterrep} shows that elements of $A_n$ expressed as matrices relative to the basis of fundamental coweights have the form $L_\pi$ for some $\pi \in S_{n+1}$. Since translation preserves volume in any basis under any inner product, part (ii) and Lemma \ref{l:det} prove part (iii). 
\end{proof}
\begin{remark}
When we convert to coordinates in $\R^n$ via the basis of fundamental coweights, we are calculating volumes and integrals with a standard Lebesgue measure on $\R^n$ equipped with the standard inner product. This is not the same inner product as the one on $V$. To see this difference in inner product visually, compare \cite[Figure 5.1]{martinez} to a standard centrally-symmetric representation of the braid arrangement in the plane.
\end{remark}
Part (i) of Lemma \ref{l:affineprelims} ensures that $w(\alc_\circ)$ in the next definition is an alcove.
\begin{definition}
\label{d:alcovelabel}
Let $w \in \A_n$. The \emph{alcove of $w$}, denoted $\alc_w$, is the alcove $w(\alc_\circ)$.
\end{definition}
\subsection{Computing the volume of \texorpdfstring{$P_\pi$}{Pp} by counting alcoves}
\begin{lemma} 
\label{l:polytopeunion}
Let $\pi \in S_{n+1}$ and let $P_\pi$ be the polytope of steps for $\pi$. Let $\alc$ be an alcove of the {\typeA} expressed in step coordinates. Then $\alc \subseteq P_\pi$ or $\alc \cap P_\pi = \emptyset$. 
\end{lemma}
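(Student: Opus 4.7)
The plan is to recognize that every hyperplane bounding $P_\pi$ lies inside the {\typeA}, and then use the fact that an alcove, being a connected component of the complement of that arrangement, cannot straddle any such hyperplane.

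First I would unpack Definition \ref{d:rationalpolytope}: the closed polytope $P_\pi$ is cut out by the coordinate inequalities $x_k \geq 0$ for $k \in [n]$ together with the slab inequalities $0 \leq x_{m_i} + \cdots + x_{M_i - 1} \leq 1$ for $i \in [n]$, where $m_i = \min\{\pi(i),\pi(i+1)\}$ and $M_i = \max\{\pi(i),\pi(i+1)\}$. Comparing with Definition \ref{d:consecutiveaffine} in step coordinates, the bounding hyperplane $x_k = 0$ is exactly $H_{k,k+1}^0$, and each bounding hyperplane $x_{m_i} + \cdots + x_{M_i-1} = a$ for $a \in \{0,1\}$ is exactly $H_{m_i, M_i}^a$. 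Since $(m_i, M_i) \in \Phi^+$ and $a \in \Z$, every facet-defining hyperplane of $P_\pi$ belongs to the {\typeA}.

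Second I would invoke connectedness. By definition, $\alc$ is a connected component of $V \setminus \bigcup H_{ij}^a$, hence an open connected set disjoint from every hyperplane in the arrangement. For each linear form $\ell$ whose zero set is a facet-defining hyperplane of $P_\pi$ (either $\ell(\bm{x}) = x_k$ or $\ell(\bm{x}) = x_{m_i} + \cdots + x_{M_i-1} - a$), $\ell$ is continuous and nowhere zero on $\alc$, so $\ell$ has constant sign throughout $\alc$.

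Finally I would conclude the dichotomy. If, for every inequality defining $P_\pi$, the corresponding linear form has the sign consistent with that inequality throughout $\alc$, then every point of $\alc$ satisfies all the defining inequalities strictly, giving $\alc \subseteq P_\pi$. Otherwise some linear form has the opposite sign throughout $\alc$, which places $\alc$ entirely in an open half-space disjoint from the closed half-space occupied by $P_\pi$; hence $\alc \cap P_\pi = \emptyset$. There is no serious obstacle here: the only care needed is the bookkeeping that matches the index ranges of Definition \ref{d:rationalpolytope} with those of Definition \ref{d:consecutiveaffine}, which is immediate because both use sums of consecutive step coordinates indexed by positive roots.
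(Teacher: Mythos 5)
Your proof is correct, and at its core it rests on the same observation as the paper's: every facet-defining hyperplane of $P_\pi$ belongs to the affine arrangement, so an alcove, lying strictly on one side of each arrangement hyperplane, either satisfies all the defining inequalities or violates one of them throughout. The paper records ``which side'' via the alcove address $k_\alc$, while you record it via sign-constancy of the linear forms on a connected component of the complement; these are equivalent bookkeeping devices. There is, however, one respect in which your version is genuinely more complete. The paper's proof only examines the slab constraints indexed by the consecutive roots $(m_i,M_i)$ and asserts that $k_\alc(m_i,M_i)=0$ for all $i$ already forces $\alc\subseteq P_\pi$; this overlooks the coordinate inequalities $x_k\geq 0$, which are part of Definition \ref{d:rationalpolytope}. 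For instance, with $\pi=2413$ the point $(0.5,-0.2,0.5)$ lies in an alcove whose address vanishes on $(1,3)$, $(2,4)$, and $(1,4)$, yet that alcove has $x_2<0$ throughout and so is disjoint from $P_\pi$ for a reason the paper's case analysis does not mention. Your argument handles this correctly by observing that $x_k=0$ is the arrangement hyperplane $H_{k,k+1}^0$ and including it in the sign analysis, so the dichotomy covers all the defining inequalities of $P_\pi$, not just the slabs. In short: same strategy, but your execution closes a small gap in the paper's own write-up.
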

\begin{proof}
The address $k_\alc: \Phi^+ \ra \Z$ for $\alc$ determines a system of inequalities where each inequality has the form $k_\alc(i,j) < x_i + \cdots + x_{j-1} < k_\alc(i,j) + 1$, for each $(i,j) \in \Phi^+$. This includes the pairs $(m_i, M_i - 1) \in \Phi^+$ in Definition \ref{d:rationalpolytope}. If $k_\alc(m_i,M_i - 1) = 0$ for all $i \in [n]$, then every $\bm{x} \in \alc$ satisfies all the inequalities that define $P_\pi$, which implies $\alc \subseteq P_\pi$. Otherwise, the sum of coordinates $x_{m_i} + \cdots + x_{M_i - 1}$ is incompatible with $P_\pi$ for some $i \in [n]$, which implies $\alc \cap P_\pi = \emptyset$.
\end{proof}
\begin{lemma}
\label{l:affinealcovecount}
Let
\begin{align*}
P &= \left\{ \lambda \in V : \;\; 0 < (\lambda, \alpha_i) < 1 \text{ for all } i \in [n]\right\}\\
&= \left\{ x_1 \omega_1 + \cdots + x_n \omega_n : \;\; 0 < x_i < 1 \text{ for all } i \in [n]\right\}.
\end{align*}
In step coordinates, the parallelepiped $P$ is the unit cube $[0,1]^n$. There are $n!$ alcoves of the {\typeA} contained in $P$.
\end{lemma}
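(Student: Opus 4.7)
The plan is to (a) identify the fundamental alcove $\alc_\circ$ in step coordinates and compute its volume, (b) show that $P$ is a disjoint union (up to measure zero) of the alcoves contained in it, and (c) divide volumes.

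For (a), the relation $(\omega_j, \alpha_i) = \delta_{ij}$ gives $(x_1\omega_1 + \cdots + x_n\omega_n, \alpha_i) = x_i$, so the description $P = [0,1]^n$ in step coordinates is immediate. By Definition \ref{d:address}, $\alc_\circ$ in step coordinates is the region cut out by $0 < x_i + \cdots + x_{j-1} < 1$ for every $1 \le i < j \le n+1$. The inequalities coming from $(i, i+1)$ read $x_k > 0$ and the one from $(1, n+1)$ reads $x_1 + \cdots + x_n < 1$; once these hold, every other constraint is automatic. Hence $\ol{\alc_\circ}$ is the standard $n$-simplex, whose volume in step coordinates is $1/n!$.

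For (b), I would observe that $\partial P$ consists of pieces of the hyperplanes $x_i = 0$ and $x_i = 1$, which in the notation of Definition \ref{d:consecutiveaffine} are exactly $H_{i,i+1}^0$ and $H_{i,i+1}^1$, both in the type $A_n$ affine arrangement. Consequently, any alcove $\alc$ that meets the open set $P$ must satisfy $k_\alc(i,i+1) = 0$ for every $i \in [n]$ (the only integer $k$ with $k < x_i < k+1$ while $0 < x_i < 1$), and this forces $\alc \subseteq P$. So no alcove straddles $\partial P$, and $P$ is the disjoint union of the alcoves contained in it together with a measure-zero set.

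Finally, by Lemma \ref{l:affineprelims}(i) the group $\A_n$ acts simply transitively on alcoves, and by part (iii) this action is volume-preserving in step coordinates, so every alcove has the same volume $1/n!$ as $\alc_\circ$. Dividing the volume of $P$, which equals $1$, by $1/n!$ yields the count of $n!$ alcoves in $P$. The main subtlety is step (b): verifying that the boundary of the cube is contained in the affine arrangement, which is what prevents alcoves from leaking out of $P$; once that is in hand the result reduces to a straightforward volume comparison.
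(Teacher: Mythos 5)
Your argument is correct, but it is a genuinely different route from the paper's, which simply defers to the literature (the proof of Theorem 4.9 in Humphreys, or Section 3 of Lam--Postnikov) rather than giving a self-contained argument. What you supply instead is a direct volume computation: you identify $\alc_\circ$ in step coordinates as the open standard simplex of volume $1/n!$ (correctly noting that the simple-root and highest-root constraints imply all the others), observe that the facets of the cube lie on the hyperplanes $H_{i,i+1}^0$ and $H_{i,i+1}^1$ of the arrangement so that no alcove straddles $\partial P$, and then divide $\mathrm{volume}(P)=1$ by the common alcove volume obtained from simple transitivity and part (iii) of Lemma \ref{l:affineprelims}. This is essentially the alcoved-polytope volume argument of Lam--Postnikov specialized to the cube, and it buys a self-contained proof that also independently establishes the fact, used in Corollary \ref{c:polytopevolume}, that every alcove has volume $1/n!$ in step coordinates; the reference to Humphreys instead goes through the group-theoretic structure of $\A_n$ (translation lattice and index computations). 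One small point of hygiene: the lemma's $P$ is the open cube $(0,1)^n$ rather than $[0,1]^n$, and your step (b) should be read as saying that $P$ equals the union of the alcoves it contains together with a measure-zero subset of the arrangement hyperplanes, which is exactly what the volume comparison needs.
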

\begin{proof}
See the proof of \cite[Theorem 4.9]{humphreys} or \cite[Section 3]{lam-postnikov2}.
\end{proof}
\begin{corollary}
\label{c:polytopevolume}
In step coordinates, each alcove of the {\typeA} has volume $1/n!$. Thus,
\[
\PRB(f,\pi) = \frac{K_\pi}{2^n n!},
\]
where $K_\pi$ is the number of alcoves contained in $P_\pi$.
\end{corollary}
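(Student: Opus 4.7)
The plan is to assemble the corollary directly from the pieces already established, in two clean steps: first pin down the volume of a single alcove in step coordinates, then express $P_\pi$ as a disjoint union of alcoves and apply Lemma \ref{l:volumepolytope}.

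For the volume claim, I would start from Lemma \ref{l:affinealcovecount}: in step coordinates the parallelepiped $P$ coincides with the unit cube $[0,1]^n$, which has Lebesgue volume $1$, and it contains exactly $n!$ alcoves of the {\typeA}. By Lemma \ref{l:affineprelims}(i), the {\affWeyl} $\A_n$ acts simply transitively on all alcoves, so any two alcoves are related by some $w \in \A_n$. By Lemma \ref{l:affineprelims}(iii), such a $w$ acts on step coordinates as a volume-preserving transformation of $\R^n$. Hence all alcoves have a common volume, and since $n!$ of them tile $P$ up to a measure-zero set of hyperplane pieces, each has volume $1/n!$.

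Next, I would use Lemma \ref{l:polytopeunion}, which guarantees that for every alcove $\alc$ of the {\typeA}, either $\alc \subseteq P_\pi$ or $\alc \cap P_\pi = \emptyset$. Since the union of all affine hyperplanes $H_{ij}^a$ has Lebesgue measure zero, $P_\pi$ agrees up to measure zero with the union of those alcoves it contains. Letting $K_\pi$ denote the number of such alcoves, the previous step gives
\[
\mathrm{volume}(P_\pi) = \frac{K_\pi}{n!}.
\]
Combining this with Lemma \ref{l:volumepolytope} yields
\[
\PRB(f,\pi) = \frac{1}{2^n}\cdot\mathrm{volume}(P_\pi) = \frac{K_\pi}{2^n n!},
\]
as claimed.

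I do not expect any real obstacle here: both inputs (the alcove count in the cube and the ``all-or-nothing'' containment of alcoves in $P_\pi$) have already been established, and the only subtle point is confirming that the measure-zero overlap between alcoves along hyperplanes does not affect the volume computation, which is routine. The step worth stating carefully is the volume-preservation argument, since it rests on passing through the fundamental coweight basis, where elements of $A_n$ act by the matrices $L_\pi$ of determinant $\pm 1$ (Lemma \ref{l:coxeterrep} together with Lemma \ref{l:det}), and translations are volume-preserving automatically.
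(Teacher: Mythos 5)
Your proposal is correct and follows essentially the same route as the paper: it deduces the common alcove volume $1/n!$ from the $n!$ alcoves tiling the unit cube (Lemma \ref{l:affinealcovecount}) together with the volume-preserving action of $\A_n$ (Lemma \ref{l:affineprelims}), then combines the all-or-nothing containment of alcoves in $P_\pi$ (Lemma \ref{l:polytopeunion}) with Lemma \ref{l:volumepolytope}. You simply spell out the transitivity and measure-zero details that the paper's terser proof leaves implicit.
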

\begin{proof}
The set of points not in any alcove has measure zero.  Thus part (iii) of Lemma \ref{l:affineprelims} and Lemma \ref{l:affinealcovecount} show that alcoves have volume $1/n!$ in step coordinates. The result then follows from Lemma \ref{l:polytopeunion} and Lemma \ref{l:volumepolytope}.
\end{proof}
Not every function from $\Phi^+$ to $\Z$ is the address of an alcove. A characterization of such functions is given by Shi's Theorem. See \cite[Lemma 6.1.3]{shi1} or \cite[Theorem 5.2]{shi2}.
\begin{theorem} 
\label{t:shi}
(Shi's Theorem) A function $k:\Phi^+ \ra \Z$ is the address of an alcove if and only if
\[
k(i,t) + k(t,j) \leq k(i,j) \leq k(i,t) + k(t,j) + 1
\]
for all $i,t,j$ satisfying $1 \leq i < t < j \leq n + 1$.
\end{theorem}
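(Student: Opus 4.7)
The forward direction is a short additivity argument; the reverse direction requires constructing an alcove point from the Shi data.

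For \emph{necessity}, I would fix any $\lambda \in \alc$ and use additivity: for $i < t < j$,
\[
(\lambda, \epsilon_i - \epsilon_j) = (\lambda, \epsilon_i - \epsilon_t) + (\lambda, \epsilon_t - \epsilon_j).
\]
Inserting the strict bounds $k(a,b) < (\lambda, \epsilon_a - \epsilon_b) < k(a,b) + 1$ for each of the three pairs shows that the length-one interval $(k(i,j), k(i,j)+1)$ and the length-two interval $(k(i,t)+k(t,j),\, k(i,t)+k(t,j)+2)$ must overlap. Since the $k$'s are integers, the overlap forces both Shi inequalities.

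For \emph{sufficiency}, suppose $k$ satisfies the hypotheses. My plan is to construct $\lambda \in V$ with $k(i,j) < (\lambda, \epsilon_i - \epsilon_j) < k(i,j)+1$ for every $(i,j) \in \Phi^+$. Writing $\lambda = \sum_i t_i \epsilon_i$ we have $(\lambda, \epsilon_i - \epsilon_j) = t_i - t_j$, and projecting along $(1, 1, \ldots, 1)$ at the end lets me ignore $\sum_i t_i = 0$. I would fix $t_1 = 0$ and set $t_j = -k(1,j) - \xi_j$ for $j \geq 2$, with $\xi_j \in (0,1)$ still free. The constraint at the pair $(1,j)$ is then automatic. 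For $1 < i < j$, writing $d_{ij} := k(1,j) - k(1,i) - k(i,j)$, the constraint reduces to $\xi_j - \xi_i \in (-d_{ij},\, 1 - d_{ij})$. Shi applied to the triple $(1,i,j)$ forces $d_{ij} \in \{0,1\}$, so this becomes $\xi_i < \xi_j$ when $d_{ij} = 0$ and $\xi_j < \xi_i$ when $d_{ij} = 1$.

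Thus I must show the resulting binary relation on $\{2, \ldots, n+1\}$ extends to a strict total order, i.e.\ is transitive. Shi for a triple $a < b < c$ translates to $d_{ab} + d_{bc} - d_{ac} \in \{0, 1\}$, constraining any three $d$-values. Verifying transitivity reduces to examining the six possible relative orderings of an arbitrary triple $i, t, j$ within $\{2, \ldots, n+1\}$: in each ordering, a single application of the appropriate Shi inequality pins down the sign of $d_{ij}$ (or of $d_{ji}$) from $d_{it}, d_{tj}$ (or their index-reversed analogues), giving the required inequality between $\xi_i$ and $\xi_j$. This sign bookkeeping across the six cases is the main obstacle --- elementary but fiddly, since which Shi inequality applies depends on how the triple is ordered. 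Once transitivity is in hand, I pick $\xi_j \in (0,1)$ realizing the total order (for instance, evenly spaced), recover $t_j$, and project to $V$ by subtracting the mean of the $t_i$'s; the resulting $\lambda$ lies in the alcove with address $k$.
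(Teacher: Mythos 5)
Your argument is correct, but note that the paper does not prove this statement at all: it is quoted as Shi's Theorem with citations to Shi's book and paper, so you have supplied a self-contained proof where the paper defers to the literature. Your necessity argument (additivity of $(\lambda,\epsilon_i-\epsilon_j)$ plus the integrality of the $k$'s forcing the two open intervals to overlap) is exactly right. Your sufficiency construction also works: anchoring at $t_1=0$, the identity $d_{ab}+d_{bc}-d_{ac}=k(a,c)-k(a,b)-k(b,c)\in\{0,1\}$ really does encode Shi for the triple $(a,b,c)$, and the constraint $d_{ij}\in\{0,1\}$ comes from Shi applied to $(1,i,j)$. For the transitivity step you dread, there is a cleaner route than six orderings: the relation $\prec$ is a tournament on $\{2,\dots,n+1\}$, and a tournament is a strict total order if and only if it has no $3$-cycles; a $3$-cycle on a naturally ordered triple $a<b<c$ forces either $d_{ab}=d_{bc}=0,\ d_{ac}=1$ (so $d_{ab}+d_{bc}-d_{ac}=-1$) or $d_{ab}=d_{bc}=1,\ d_{ac}=0$ (so the sum is $2$), both contradicting $d_{ab}+d_{bc}-d_{ac}\in\{0,1\}$. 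With that, choosing the $\xi_j$ evenly spaced in $(0,1)$ realizes all constraints since any two such $\xi$'s differ by less than $1$, and subtracting the mean lands you in $V$ without changing any $(\lambda,\epsilon_i-\epsilon_j)$; the alcove containing the resulting $\lambda$ then has address $k$ because the address is read off from $\lambda$'s inner products. So the proof is complete modulo writing out that one combinatorial observation.
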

Shi's Theorem and Corollary \ref{c:polytopevolume} provide a straightforward, though inefficient, method for computing $\PRB(f,\pi)$. This method, and an alternative one based on \cite{stanley}, is given in \cite{denoncourt}.
\begin{proposition}
\label{p:computeuniform}
Let $\pi \in S_{n+1}$. Let $f$ be the uniform density function on $[-1,1]$. Let $K_\pi$ denote the number of functions $k:\Phi^+ \ra \N$ satisfying the inequalities
\[
k(i,t) + k(t,j) \leq k(i,j) \leq k(i,t) + k(t,j) + 1,
\]
where $i < t < j$, and also satisfying the equalities $k(i,j) = 0$ whenever there exists $c$ such that $\pi(c)  \leq i < j \leq \pi(c+1)$ or $\pi(c+1) \leq i < j \leq \pi(c)$. Then
\[
\PRB(f,\pi) = \frac{K_\pi}{2^n n!}.
\] 
\end{proposition}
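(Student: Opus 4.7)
The plan is to parlay Corollary \ref{c:polytopevolume} into a bijection between alcoves contained in $P_\pi$ and the functions $k : \Phi^+ \to \N$ described in the statement. By that corollary, it suffices to show that the number of alcoves of the {\typeA} contained in $P_\pi$ equals $K_\pi$, and Shi's Theorem (Theorem \ref{t:shi}) already tells us that the Shi inequalities characterize the addresses of alcoves in $V$. So the task reduces to identifying the additional constraints on an address $k_\alc$ that single out the alcoves sitting inside $P_\pi$.

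First, I would translate the defining inequalities of $P_\pi$ into constraints on $k_\alc$. In step coordinates, $x_{m_c} + \cdots + x_{M_c - 1} = (\lambda, \epsilon_{m_c} - \epsilon_{M_c})$, so the requirement $0 \leq x_{m_c} + \cdots + x_{M_c - 1} \leq 1$ from Definition \ref{d:rationalpolytope} forces $k_\alc(m_c, M_c) = 0$ for every $c \in [n]$; likewise the coordinate inequality $x_i \geq 0$ forces $k_\alc(i, i+1) \geq 0$ for every $i \in [n]$. Iterating the lower Shi inequality $k_\alc(i,j) \geq k_\alc(i,t) + k_\alc(t,j)$ gives the telescoping bound $k_\alc(i,j) \geq \sum_{\ell = i}^{j-1} k_\alc(\ell, \ell+1)$, so non-negativity on the simple roots already forces $k_\alc$ to be $\N$-valued on all of $\Phi^+$. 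If in addition $k_\alc(m_c, M_c) = 0$, then each $k_\alc(\ell, \ell+1)$ with $m_c \leq \ell < M_c$ is squeezed to $0$, and another pass through the telescoping bound forces $k_\alc(i,j) = 0$ for every pair $(i,j)$ with $m_c \leq i < j \leq M_c$. That is exactly the vanishing set flagged in the proposition.

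I would close the bijection by running the implications in reverse: given any $k : \Phi^+ \to \N$ satisfying the Shi inequalities and the stated vanishing conditions, Shi's Theorem produces an alcove $\alc$ with $k_\alc = k$, and the translated inequalities above, together with Lemma \ref{l:polytopeunion}, guarantee $\alc \subseteq P_\pi$. The formula $\PRB(f,\pi) = K_\pi/(2^n n!)$ then follows directly from Corollary \ref{c:polytopevolume}. I expect the main obstacle to be the bookkeeping in this last step: one must verify that the pairs $(i,j)$ listed in the proposition are exactly the union over $c \in [n]$ of the sub-intervals $m_c \leq i < j \leq M_c$, so that the explicit vanishing condition matches precisely what the Shi inequalities force from the corner conditions $k_\alc(m_c, M_c) = 0$, with no spurious constraints added and none omitted.
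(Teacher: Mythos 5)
Your argument is correct and is exactly the route the paper intends: the proposition is stated there without proof, as an immediate consequence of Shi's Theorem, Lemma \ref{l:polytopeunion}, and Corollary \ref{c:polytopevolume}, and your translation of the defining inequalities of $P_\pi$ into the address conditions $k(m_c,M_c)=0$ and $k(i,i+1)\geq 0$, propagated through the lower Shi inequality, supplies the omitted details. The only step worth tightening is the final vanishing claim: to conclude $k(i,j)=0$ for $m_c \leq i < j \leq M_c$ you should telescope downward from $0=k(m_c,M_c)\geq k(m_c,i)+k(i,j)+k(j,M_c)$ together with non-negativity of each summand, rather than upward from the simple roots (which only yields the lower bound $k(i,j)\geq 0$).
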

\subsection{A characterization of the weak order in terms of alcove addresses}
\label{ss:weakordersection}
Recall that $\A_n$ is generated by reflections $s_1,\ldots,s_{n+1}$. The \emph{length of $w$}, denoted $\ell(w)$, is the smallest number of generators in an expression of $w$ as a product of generators. Define a relation $\ra$ by the condition $w \ra ws$ if $s$ is a generator and $\ell(ws) > \ell(w)$. The \emph{weak order} on $\A_n$ is defined as the transitive closure of the relation $\ra$. 

The main result of this section, Lemma \ref{l:weakcharacter1}, characterizes the weak order on $\A_n$ in terms of alcove addresses. It might be folklore or known. There is an indirect way to prove the lemma by combining \cite[Theorem 4.1]{shi3} with \cite[Theorem 5.3]{bjorner-brenti2}. The approach given below uses a geometric characterization of the weak order on $\A_n$ given in \cite{humphreys}.

For a given hyperplane $H_{ij}^a$ of the {\typeA}, two sides of the hyperplane are determined by the conditions $(\lambda, \epsilon_i - \epsilon_j) > a$ and $(\lambda, \epsilon_i - \epsilon_j) < a$. We say a hyperplane $H$ \emph{separates} $\alc$ from $\alc_\circ$ if $\alc$ and $\alc_\circ$ lie on two sides of $H$. Based on the conditions for determining sides, we determine whether $H_{ij}^a$ separates $\alc$ and $\alc_\circ$ from the the address of $\alc$.
\begin{lemma}
\label{l:separatingaddress}
Let $H_{ij}^a$ be a hyperplane in the {\typeA}, let $\alc_\circ$ denote the fundamental alcove, and let $\alc$ be an arbitrary alcove. If $a > 0$, then $H_{ij}^a$ separates $\alc$ from $\alc_\circ$ if and only if $k_\alc(i,j) \geq a$. If $a \leq 0$, then $H_{ij}^a$ separates $\alc$ from $\alc_\circ$ if and only if $k_\alc(i,j) \leq a - 1$. 
\end{lemma}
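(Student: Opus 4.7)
The plan is to directly analyze which open half-space of $H_{ij}^a$ contains $\alc_\circ$ and which contains $\alc$, using the respective address functions, and then read off separation from the comparison. The whole argument rests on two facts: the fundamental alcove has address identically zero, and alcove addresses take integer values.

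First I would fix an arbitrary $(i,j) \in \Phi^+$ and write $\varphi(\lambda) := (\lambda, \epsilon_i - \epsilon_j)$. The two open half-spaces determined by $H_{ij}^a$ are $\{\varphi > a\}$ and $\{\varphi < a\}$. By Definition \ref{d:address} applied to $\alc_\circ$, every $\lambda \in \alc_\circ$ satisfies $0 < \varphi(\lambda) < 1$. Since $a \in \Z$, the case $a \geq 1$ forces $\varphi(\lambda) < 1 \leq a$ on all of $\alc_\circ$, placing $\alc_\circ$ in the half-space $\{\varphi < a\}$; the case $a \leq 0$ forces $\varphi(\lambda) > 0 \geq a$, placing $\alc_\circ$ in $\{\varphi > a\}$.

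Next I would record the corresponding information for an arbitrary alcove $\alc$. By Definition \ref{d:address}, points $\lambda \in \alc$ satisfy $k_\alc(i,j) < \varphi(\lambda) < k_\alc(i,j) + 1$, an interval of length one between consecutive integers. Since $a$ is an integer, this interval is disjoint from $H_{ij}^a$, so $\alc$ lies entirely in one of the two open half-spaces. Specifically, if $k_\alc(i,j) \geq a$ then $\varphi(\lambda) > k_\alc(i,j) \geq a$ on $\alc$, so $\alc \subseteq \{\varphi > a\}$; if instead $k_\alc(i,j) \leq a - 1$ then $\varphi(\lambda) < k_\alc(i,j) + 1 \leq a$ on $\alc$, so $\alc \subseteq \{\varphi < a\}$. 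These two cases exhaust $\Z$.

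Finally I would combine the two observations. The hyperplane $H_{ij}^a$ separates $\alc$ from $\alc_\circ$ precisely when they occupy opposite half-spaces. For $a > 0$ (equivalently $a \geq 1$), $\alc_\circ \subseteq \{\varphi < a\}$, so separation holds if and only if $\alc \subseteq \{\varphi > a\}$, i.e., $k_\alc(i,j) \geq a$. For $a \leq 0$, $\alc_\circ \subseteq \{\varphi > a\}$, so separation holds if and only if $\alc \subseteq \{\varphi < a\}$, i.e., $k_\alc(i,j) \leq a - 1$. This yields the two cases in the statement. There is no real obstacle here; the only subtlety is bookkeeping at the integer boundaries, which is handled by remembering that address values, and $a$ itself, are integers so no alcove meets $H_{ij}^a$.
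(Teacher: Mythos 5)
Your proof is correct and follows essentially the same route as the paper: place $\alc_\circ$ in a half-space using $k_{\alc_\circ}(i,j)=0$, determine which half-space contains $\alc$ from $k_\alc(i,j)$, and read off separation; you merely spell out the integer-boundary bookkeeping that the paper leaves implicit.
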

\begin{proof}
Suppose $a > 0$. Since $k_{\alc_\circ}(i,j) = 0$, we have $\alc_\circ$ on the side of $H_{ij}^a$ where $(\lambda,\epsilon_i - \epsilon_j) < a$. Note that $\alc$ is on the side where $(\lambda, \epsilon_i - \epsilon_j) > a$ if and only $k_\alc(i,j) \geq a$. Thus $H_{ij}^a$ separates $\alc_\circ$ and $\alc$ if and only if $k_\alc(i,j) \geq a$.

The argument for $a < 0$ is similar. 
\end{proof}
\begin{lemma}
\label{l:separatingweak}
Let $\mathcal{L}(w)$ be the set of hyperplanes separating $\alc_w$ from $\alc_\circ$. Then $u \leq w$ in the weak order if and only if $\mathcal{L}(u) \subseteq \mathcal{L}(w)$.
\end{lemma}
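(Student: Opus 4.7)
The plan is to combine two standard facts about the action of $\A_n$ on its Coxeter complex, both treated in \cite[Sections 4.4--4.5]{humphreys}: (a) $\ell(w) = |\mathcal{L}(w)|$, and (b) right multiplication by a simple generator $s$ carries $\alc_w$ across exactly one of its walls $H$ to the adjacent alcove $\alc_{ws}$, with $\mathcal{L}(ws) = \mathcal{L}(w) \mathbin{\triangle} \{H\}$. Together these yield the crucial cover description: $w \lessdot ws$ in the weak order if and only if $H \notin \mathcal{L}(w)$, and then $\mathcal{L}(ws) = \mathcal{L}(w) \cup \{H\}$. The remainder of the argument is purely combinatorial manipulation of these inversion-like sets.

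For the forward direction, I would pick a saturated chain $u = u_0 \lessdot u_1 \lessdot \cdots \lessdot u_k = w$ in weak order and apply the cover description at each step. This produces a strictly increasing chain $\mathcal{L}(u_0) \subsetneq \mathcal{L}(u_1) \subsetneq \cdots \subsetneq \mathcal{L}(u_k)$, so $\mathcal{L}(u) \subseteq \mathcal{L}(w)$.

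For the reverse direction, I would induct on $m = |\mathcal{L}(w) \setminus \mathcal{L}(u)|$. When $m = 0$, the hypothesis $\mathcal{L}(u) \subseteq \mathcal{L}(w)$ forces $\mathcal{L}(u) = \mathcal{L}(w)$. Since an alcove is determined by the collection of hyperplanes separating it from $\alc_\circ$ (two alcoves agreeing on every side must coincide), we get $\alc_u = \alc_w$, and simple transitivity (part (i) of Lemma \ref{l:affineprelims}) gives $u = w$. When $m > 0$, I would take a minimal gallery from $\alc_u$ to $\alc_w$: its length equals $|\mathcal{L}(u) \mathbin{\triangle} \mathcal{L}(w)| = m$, and the hyperplanes it crosses (each exactly once) are precisely $\mathcal{L}(w) \setminus \mathcal{L}(u)$. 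The first step crosses a wall $H$ of $\alc_u$ with $H \in \mathcal{L}(w) \setminus \mathcal{L}(u)$. The adjacent alcove across $H$ is $\alc_{us}$ for some simple generator $s$, with $\mathcal{L}(us) = \mathcal{L}(u) \cup \{H\}$, so $\ell(us) = \ell(u) + 1$ and $u \lessdot us$ in the weak order. Because $\mathcal{L}(us) \subseteq \mathcal{L}(w)$ and $|\mathcal{L}(w) \setminus \mathcal{L}(us)| = m - 1$, the inductive hypothesis gives $us \leq w$, hence $u \leq w$.

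The main obstacle is justifying the minimal-gallery characterization: the set of hyperplanes crossed by a minimal gallery from $\alc_u$ to $\alc_w$ is exactly $\mathcal{L}(u) \mathbin{\triangle} \mathcal{L}(w)$, with no hyperplane crossed twice. This is the standard ``no return'' property of minimal galleries in a Coxeter complex, and it is the only nonelementary input to the plan; every other step reduces mechanically to the cover description in the opening paragraph.
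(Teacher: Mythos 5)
Your argument is correct. Note, though, that the paper does not actually prove this lemma: its ``proof'' is a bare citation of \cite[Theorem 4.5]{humphreys}, so what you have written is a self-contained derivation of the cited result rather than a variant of an argument appearing in the paper. Your derivation is the standard one (it is essentially how the inversion-set characterization of weak order is proved in Humphreys and in Bj\"orner--Brenti), and every step checks out: the forward direction follows from the cover description $\mathcal{L}(ws)=\mathcal{L}(w)\cup\{H\}$ applied along a saturated chain; the base case of the reverse direction uses that an alcove is determined by its sign vector with respect to all hyperplanes together with simple transitivity; and the inductive step correctly uses that the walls crossed by a minimal gallery from $\alc_u$ to $\alc_w$ are exactly the hyperplanes separating the two alcoves, i.e.\ $\mathcal{L}(u)\mathbin{\triangle}\mathcal{L}(w)=\mathcal{L}(w)\setminus\mathcal{L}(u)$, so the first wall crossed yields a cover $u\lessdot us$ with $\mathcal{L}(us)\subseteq\mathcal{L}(w)$. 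You are right that the only nonelementary inputs are the no-return property of minimal galleries and the identities $\ell(w)=|\mathcal{L}(w)|$ and $\mathcal{L}(ws)=\mathcal{L}(w)\mathbin{\triangle}\{H\}$; as long as those are imported from the standard references (they are exactly the content of \cite[Sections 4.4--4.5]{humphreys}), the proof is complete. The only thing your write-up buys over the paper's citation is transparency; the only thing the citation buys is brevity.
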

\begin{proof}
This is \cite[Theorem 4.5]{humphreys}.
\end{proof}
Let $k:\Phi^+ \ra \Z$ and $k':\Phi^+ \ra \Z$ be addresses. We write $k' \leq_A k$ if $k'(i,j) \leq k(i,j)$ whenever both are nonnegative or $k'(i,j) \geq k(i,j)$ whenever both are nonpositive. We write $k' \leq k$ if $k'(i,j) \leq k(i,j)$ for all $(i,j) \in \Phi^+$, which is the standard notation for function comparison.
\begin{lemma}
\label{l:weakcharacter1}
Let $u,w \in \A_n$. Then $u \leq w$ if and only if $k_u \leq_A k_w$. 
\end{lemma}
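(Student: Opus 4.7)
The plan is to combine Lemma~\ref{l:separatingweak} with Lemma~\ref{l:separatingaddress} to reduce the weak-order comparison to a root-by-root inequality on the address functions. By Lemma~\ref{l:separatingweak}, $u \leq w$ is equivalent to $\mathcal{L}(u) \subseteq \mathcal{L}(w)$, so it suffices to describe each of these sets through $k_u$ and $k_w$ and then match the resulting condition with the definition of $\leq_A$.

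First I would fix a positive root $(i,j) \in \Phi^+$ and enumerate, via Lemma~\ref{l:separatingaddress}, the hyperplanes of the form $H_{ij}^a$ that belong to $\mathcal{L}(u)$. If $k_u(i,j) > 0$, these are exactly $H_{ij}^1, H_{ij}^2, \ldots, H_{ij}^{k_u(i,j)}$ and no hyperplane with $a \leq 0$ contributes. If $k_u(i,j) < 0$, they are exactly $H_{ij}^{k_u(i,j)+1}, \ldots, H_{ij}^{-1}, H_{ij}^0$ and no hyperplane with $a > 0$ contributes. If $k_u(i,j) = 0$, no hyperplane $H_{ij}^a$ lies in $\mathcal{L}(u)$. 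An identical description holds for $\mathcal{L}(w)$ with $k_w$ in place of $k_u$.

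Comparing the two descriptions root-by-root, $\mathcal{L}(u) \subseteq \mathcal{L}(w)$ is equivalent to asserting, for each $(i,j) \in \Phi^+$, that $k_u(i,j) > 0$ implies $k_w(i,j) \geq k_u(i,j)$, and that $k_u(i,j) < 0$ implies $k_w(i,j) \leq k_u(i,j)$. The first implication forces $k_w(i,j)$ to be positive whenever $k_u(i,j)$ is, and the second forces $k_w(i,j)$ to be negative whenever $k_u(i,j)$ is. These are precisely the two inequalities defining $k_u \leq_A k_w$: one recovers $k_u(i,j) \leq k_w(i,j)$ on the nonnegative side and $k_u(i,j) \geq k_w(i,j)$ on the nonpositive side, and the sign forcing ensures that the mixed-sign situation never arises under either direction of the equivalence.

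No single step appears genuinely difficult. The main care needed is the case split at $a = 0$ in Lemma~\ref{l:separatingaddress} (so that the boundary case $k_u(i,j) = 0$ is handled correctly) and the observation that the sign-matching implicit in $\leq_A$ is automatic once $\mathcal{L}(u) \subseteq \mathcal{L}(w)$ is unpacked one root at a time. With this bookkeeping in place, the proof reduces to a brief verification rather than any substantial computation.
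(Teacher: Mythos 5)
Your proposal is correct and takes exactly the paper's route: the paper's proof is the one-line citation of Lemma~\ref{l:separatingaddress} and Lemma~\ref{l:separatingweak}, and your root-by-root enumeration of the separating hyperplanes $H_{ij}^a$ is precisely the bookkeeping that citation leaves implicit. Your observation that the containment $\mathcal{L}(u)\subseteq\mathcal{L}(w)$ forces matching signs of $k_u(i,j)$ and $k_w(i,j)$ is the right way to reconcile the computation with the definition of $\leq_A$.
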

\begin{proof}
The result follows from Lemma \ref{l:separatingaddress} and Lemma \ref{l:separatingweak}.
\end{proof}
The addresses of alcoves in $P_\pi$ are all greater than equal to $0$. Thus, we simplify the previous lemma to characterize weak order as a comparison of addresses as functions.
\begin{corollary}
\label{c:weakcharacter2}
Let $u,w \in \A_n$. Suppose $k_u(i,j) \geq 0$ and $k_w(i,j) \geq 0$ for all $(i,j) \in \Phi^+$. Then $u \leq w$ if and only if $k_u \leq k_w$. 
\end{corollary}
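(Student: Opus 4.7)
The plan is to obtain this as an immediate specialization of Lemma \ref{l:weakcharacter1}, which already gives the full characterization $u \leq w \iff k_u \leq_A k_w$. All the work is in unpacking what $\leq_A$ degenerates to when both addresses take values in $\N$.

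Concretely, I would argue as follows. Recall that $k_u \leq_A k_w$ is the conjunction, over every $(i,j) \in \Phi^+$, of two conditional clauses: if $k_u(i,j)$ and $k_w(i,j)$ are both nonnegative, then $k_u(i,j) \leq k_w(i,j)$; and if both are nonpositive, then $k_u(i,j) \geq k_w(i,j)$. Under the hypothesis $k_u(i,j),k_w(i,j) \geq 0$, the first clause fires at every positive root and produces exactly the pointwise inequality $k_u(i,j) \leq k_w(i,j)$. The second clause is triggered only at those $(i,j)$ for which both values are simultaneously $\leq 0$ and $\geq 0$, i.e.\ both equal $0$, in which case the required inequality $0 \geq 0$ holds automatically and is consistent with the first clause. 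Thus the relation $\leq_A$ restricted to nonnegative-valued addresses is literally the pointwise order $\leq$. Combining this with Lemma \ref{l:weakcharacter1} in both directions yields the equivalence $u \leq w \iff k_u \leq k_w$.

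I do not anticipate any genuine obstacle: the corollary is a bookkeeping simplification of the previous lemma, and the only thing to be careful about is to handle the edge case $k_u(i,j) = k_w(i,j) = 0$ so that the two clauses in the definition of $\leq_A$ do not appear to conflict. The point of isolating this cleaner statement is that the alcoves relevant to computing $\PRB(f,\pi)$ live inside $P_\pi$, which in turn sits inside the unit cube $[0,1]^n$ by Lemma \ref{l:affinealcovecount} and the remark following Lemma \ref{l:volumepolytope}; hence every such alcove has an address taking values in $\N$, and this corollary is exactly the form of the weak order characterization that can be applied when indexing alcoves in $P_\pi$ by elements of $\A_n$.
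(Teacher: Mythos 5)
Your proposal is correct and is exactly the paper's route: the corollary is stated as an immediate specialization of Lemma \ref{l:weakcharacter1} (the paper gives no separate proof beyond the remark that the relevant addresses are nonnegative), and your unpacking of $\leq_A$ into the pointwise order $\leq$ under the nonnegativity hypothesis, including the harmless $k_u(i,j)=k_w(i,j)=0$ edge case, is the intended argument.
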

\subsection{Ideals in the root poset determine the alcoves in \texorpdfstring{$P_\pi$}{Pp}}
\label{ss:rootposet}
If we set $k(i,j) = 0$ whenever required by Proposition \ref{p:computeuniform}, and  greedily set $k(i,j)$ to the maximum amount allowed by Shi's theorem, then we obtain a maximal address satisfying the system of linear inequalities defining the polytope $P_\pi$. By Corollary \ref{c:weakcharacter2}, if this turns out to be a unique maximum address satisfying the system, then the alcoves in $P_\pi$ correspond to a weak order interval of $\A_n$. We use a construction due to Sommers \cite{sommers} to show that this is the case. 

There is a standard order $\leq$ on $\Phi^+$, called \emph{the root poset}, such that $(i',j') \leq (i,j)$ if and only if $i \leq i' < j' \leq j$, which is equivalent to $[i',j'] \subseteq [i,j]$. Recall that an ideal is a down-closed subset of a poset. 
\begin{definition} \label{d:ideal}
Let $\pi \in S_{n+1}$. For $i \in [n]$, we say $(\pi(i),\pi(i+1))$ is a \emph{consecutive root for $\pi$} if $\pi(i) < \pi(i+1)$. Similarly, if $\pi(i+1) < \pi(i)$, we say $(\pi(i+1),\pi(i))$ is a \emph{consecutive root for $\pi$}. Denote the collection of consecutive roots for $\pi$ by $C_\pi$. Define the \emph{root ideal of $\pi$}, denoted $\IDEAL_\pi$, by
\begin{equation*}
\IDEAL_\pi := \{(i',j') : (i',j') \leq (i,j) \text{ for some } (i,j) \in C_\pi\}
\end{equation*}
\end{definition}
The motivation for defining $\IDEAL_\pi$ comes from the next lemma, which states that the address of any alcove in $P_\pi$ is $0$ on the ideal $I_\pi$. 
\begin{lemma} \label{l:ideal}
Let $k_\alc:\Phi^+ \ra \Z$ be the address of an alcove $\alc$ in the polytope $P_\pi$ of steps for $\pi$. For any $(i',j') \in \IDEAL_\pi$ and any $(i,j) \in C_\pi$ such that $k(i,j) = 0$, we have $k(i',j') = 0$.
\end{lemma}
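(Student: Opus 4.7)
The plan is to prove the slightly stronger statement that if $(i',j') \leq (i,j)$ in the root poset and $k_\alc(i,j) = 0$, then $k_\alc(i',j') = 0$; since every element of $\IDEAL_\pi$ sits below some $(i,j) \in C_\pi$ (which automatically satisfies $k_\alc(i,j) = 0$ by the defining inequalities of $P_\pi$), the lemma follows. The argument splits into two steps: first, nonnegativity of $k_\alc$ on all of $\Phi^+$; second, a one-line application of the lower inequality in Shi's Theorem.

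For nonnegativity, I would argue that since $\alc$ is a region of the type-$A$ affine arrangement, which contains every hyperplane $H_{\ell,\ell+1}^0 = \{x_\ell = 0\}$, the sign of each coordinate $x_\ell$ is constant on $\alc$. The containment $\alc \subseteq P_\pi$ forces $x_\ell \geq 0$, and openness of $\alc$ rules out $x_\ell = 0$, so $x_\ell > 0$ throughout $\alc$. Consequently $k_\alc(\ell,\ell+1) \geq 0$ for each simple pair. Now apply the lower bound in Theorem \ref{t:shi}, namely $k(i,t)+k(t,j) \leq k(i,j)$, with $t = i+1$, and induct on $j-i$: the base case $j = i+1$ has just been handled, and in the inductive step
\[
k_\alc(i,j) \;\geq\; k_\alc(i,i+1) + k_\alc(i+1,j) \;\geq\; 0.
\]

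For the main propagation step, fix $(i',j') \leq (i,j)$, i.e.\ $i \leq i' < j' \leq j$. Assuming both inequalities are strict, apply Shi's lower bound first with the pair $(i,j)$ and splitting point $t = i'$, then again with $(i',j)$ and splitting point $t = j'$, to obtain
\[
0 \;=\; k_\alc(i,j) \;\geq\; k_\alc(i,i') + k_\alc(i',j') + k_\alc(j',j).
\]
Every summand on the right is nonnegative by the previous step, so all three vanish; in particular, $k_\alc(i',j') = 0$. The boundary cases $i=i'$ or $j'=j$ simply drop the corresponding term and the same conclusion holds. I do not expect a real obstacle: the only mildly subtle point is verifying that alcoves inside $P_\pi$ have strictly positive coordinates, which is immediate once one recalls that the coordinate hyperplanes lie in $\CS_n$ and hence cannot be crossed within an alcove.
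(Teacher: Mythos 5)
Your proof is correct, but it takes a genuinely different route from the paper's. The paper argues entirely in step coordinates: for $(i,j) \in C_\pi$ the defining inequalities of $P_\pi$ give $0 \leq x_i + \cdots + x_{j-1} \leq 1$ on $\alc$, and since every coordinate is nonnegative on $P_\pi$, any subsum $x_{i'} + \cdots + x_{j'-1}$ with $i \leq i' < j' \leq j$ also lies in $[0,1]$, forcing $k_\alc(i',j') = 0$ in one line. You instead work at the level of addresses: you first establish $k_\alc \geq 0$ on all of $\Phi^+$ (your argument that $\alc$ cannot cross the coordinate hyperplanes $H_{\ell,\ell+1}^0$ is sound, though once you know each $x_\ell > 0$ you could conclude $x_i + \cdots + x_{j-1} > 0$ directly without the induction through Shi's Theorem), and then propagate the zero downward in the root poset via the lower inequality $k(i,t) + k(t,j) \leq k(i,j)$, chaining it at the splitting points $i'$ and $j'$. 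Your approach proves the slightly more general fact that for any nonnegative address, a zero at a root forces zeros at everything below it in the root poset — which is essentially the combinatorial mechanism behind Lemma \ref{l:sommers} — at the cost of invoking Theorem \ref{t:shi}, which the paper's proof of this particular lemma does not need. Both arguments are complete; the paper's is more elementary, yours fits more naturally into the address-based framework of Section \ref{ss:rootposet}.
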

\begin{proof}
Given that $(i,j) \in C_\pi$, we know $x_i + \cdots + x_{j-1} \leq 1$. Thus, if $i' > i$ and $j' < j$, we know $x_{i'} + \cdots + x_{j' - 1} \leq 1$. It follows that $k(i',j') = 0$.  
\end{proof}
In the next definition, it is more convenient to regard elements of $\Phi^+$ as vectors, rather than using our abbreviation as pairs $(i,j)$ of integers.
\begin{definition}
\label{d:rootideal}
For a fixed root $\alpha \in \Phi^+$ and a fixed ideal $\IDEAL$ of $\Phi^+$, let $\alpha_{\IDEAL}$ be defined by
\[
\alpha_{\IDEAL} = \text{min} \; \left\{k : \sum_{i = 1}^{k + 1} \gamma_i = \alpha  \text{ with } \gamma_i \in \IDEAL \right\}.
\]
\end{definition}
In other words, the smallest number of joins needed to express $\alpha$ as a join in the root poset using only elements of $\IDEAL$ is $\alpha_{\IDEAL} + 1$. The value of $\alpha_{\IDEAL}$ is zero for any element of $\IDEAL$. 

As in Section \ref{ss:weakordersection}, we write $k' \leq k$ if $k'(i,j) \leq k(i,j)$ for all $(i,j) \in \Phi^+$ for addresses that are always nonnegative. The next lemma is a dual version of \cite[Theorem 2]{armstrong}. 
\begin{lemma} 
\label{l:sommers}
(Sommers \cite[Section 5]{sommers})
For any ideal $\IDEAL$ of $\Phi^+$ that contains all the simple roots, there exists a unique maximum address $k_\IDEAL:\Phi^+ \ra \Z$ such that $k_\IDEAL(i,j) = 0$ for all $(i,j) \in \IDEAL$. It is defined by
\[
k_\IDEAL(i,j) = (i,j)_{\IDEAL}.
\]
\end{lemma}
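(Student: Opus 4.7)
The plan is to give a concrete combinatorial description of $\alpha_{\IDEAL}$, verify that the proposed formula defines a valid address vanishing on $\IDEAL$, and then derive maximality by iterated application of Shi's upper bound.

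First I would pin down the structure of the minimum decompositions. If $(i,j) = \sum_{s=1}^{m} \gamma_s$ with each $\gamma_s = (a_s, b_s)$ a positive root, then $\sum (\epsilon_{a_s} - \epsilon_{b_s}) = \epsilon_i - \epsilon_j$ forces the multigraph with directed edges $a_s \to b_s$ to have balanced in- and out-degrees everywhere except at $i$ (net outflow one) and at $j$ (net inflow one). Since every edge points from a smaller to a larger index, any Eulerian trail is strictly increasing in vertex, so after reindexing the $\gamma_s$ form the consecutive links of a chain $i = t_0 < t_1 < \cdots < t_m = j$ with $\gamma_s = (t_{s-1}, t_s)$. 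Hence $(i,j)_{\IDEAL} + 1$ equals the minimum number of links in such a chain with each link in $\IDEAL$; since $\IDEAL$ contains every simple root, the simple-root chain is always available, and $k_{\IDEAL}$ is well-defined with values in $\N$.

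Next I would verify that $k_{\IDEAL}$ is an address vanishing on $\IDEAL$. Vanishing is immediate: any $(i,j) \in \IDEAL$ has the one-term decomposition. For Shi's inequalities, fix $i < t < j$. The upper bound $k_{\IDEAL}(i,j) \leq k_{\IDEAL}(i,t) + k_{\IDEAL}(t,j) + 1$ follows by concatenating minimum chains for $(i,t)$ and $(t,j)$. For the lower bound, take a minimum chain for $(i,j)$: if $t$ coincides with some $t_l$, split the chain at $t$; otherwise $t$ lies strictly inside some link $(t_{l-1}, t_l) \in \IDEAL$, and down-closedness of $\IDEAL$ forces both $(t_{l-1}, t)$ and $(t, t_l)$ into $\IDEAL$, so the chain can be refined through $t$ and then split. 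Either way $k_{\IDEAL}(i,t) + k_{\IDEAL}(t,j) \leq k_{\IDEAL}(i,j)$, so Shi's Theorem applies.

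Finally I would prove maximality. Let $k:\Phi^+ \to \Z$ be any address satisfying $k \equiv 0$ on $\IDEAL$. Fix $(i,j) \in \Phi^+$ and let $i = t_0 < t_1 < \cdots < t_m = j$ be a minimum chain in $\IDEAL$ for $(i,j)$, so $m - 1 = k_{\IDEAL}(i,j)$. Telescoping Shi's upper bound $k(a,c) \leq k(a,b) + k(b,c) + 1$ along the chain and using $k(t_{s-1}, t_s) = 0$ at each step gives $k(i,j) \leq m - 1 = k_{\IDEAL}(i,j)$, so $k \leq k_{\IDEAL}$ pointwise and uniqueness of the maximum is automatic. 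The main obstacle will be the chain-structure observation in step one; the Eulerian-trail argument is elementary but must rule out more exotic decompositions, and it is what drives both the lower Shi inequality and the telescoping bound behind maximality.
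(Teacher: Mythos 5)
Your proof is correct, but it is a genuinely different argument from the paper's: the paper proves this lemma purely by citation, quoting Sommers' Lemma 5.1(2) for the upper bound $k(i,j)\leq (i,j)_{\IDEAL}$ and Lemma 5.2(2) for the existence of an address attaining it, and then observing that these two facts together give a unique maximum. You instead reprove both ingredients from scratch in type $A$. Your key structural observation --- that any decomposition of $\epsilon_i-\epsilon_j$ into positive roots is forced to be a chain $i=t_0<t_1<\cdots<t_m=j$ --- is sound; the one point to make fully explicit is why no ``exotic'' component can occur, namely that a nonempty sub-multigraph in which every vertex has equal in- and out-degree must contain a directed cycle, which is impossible when every edge increases the index. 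With that in hand, your verification of Shi's two inequalities for $k_{\IDEAL}$ (concatenation for the upper bound; splitting or refining a minimum chain at $t$ using down-closedness of $\IDEAL$ for the lower bound) and your telescoping of Shi's upper bound along a minimum chain to get $k\leq k_{\IDEAL}$ are both correct and complete. The trade-off: the paper's citation inherits Sommers' result for general root systems with no work, while your argument is self-contained, elementary, and ties the lemma directly to the paper's own statement of Shi's Theorem --- it would serve as a readable replacement proof, at the cost of being specific to type $A$.
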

\begin{proof}
In the proof of \cite[Lemma 5.1 part (2)]{sommers}, it is shown that $k(i,j) \leq (i,j)_\IDEAL$ for any address satisfying $k(i,j) = 0$ for all $(a,b) \in \IDEAL$. In \cite[Lemma 5.2 part (2)]{sommers}, it is shown that there exists an address $k$ such that $k(i,j) = (i,j)_\IDEAL$ for all $(i,j) \in \Phi^+$. Since any address $k'$ satisfying $k'(i,j) = 0$ for all $(i,j) \in \IDEAL$ must also satisfy $k'(i,j) \leq (i,j)_\IDEAL = k(i,j)$, it follows that $k$ is the unique maximum address such that $k(i,j)$ is zero on $\IDEAL$.
\end{proof}
\begin{example}
\label{ex:ideal}
The alcove address of Figure \ref{f:alcove} has $k(i,j) = 0$ for any $(i,j)$ where $j = i + 1$ as well as $(1,3)$ and $(2,4)$. The maximum alcove guaranteed by Lemma \ref{l:sommers} is obtained by filling the entries with the maximum possible value that the conditions of Shi's theorem allows. The address of this alcove is given in Figure \ref{f:maximumalcove}. Its values are, as expected, larger than those of Figure \ref{f:alcove}.
\end{example}
\begin{figure}[htb] 
	\centering
		\begin{tikzpicture}[scale=.25]
		    \foreach \i in {1,2,3,4,5}   {
		        \pgfmathsetmacro{\a}{int(\i + 1)}
		        \foreach \j in {\a,...,6} {
		            \draw (2*\i,2*\j) node{$\i \j$};
		        }   
		    }
		    \draw (14.5,9.5) node{$k_\IDEAL$};
		    \draw[very thick,->] (13,7) .. controls (14,9.5) and (15,6) .. (17,7);
		    \draw (20, 4) node{$0$};
		    \draw (20, 6) node{$0$};
		    \draw (20, 8) node{$1$};
		    \draw (20, 10) node{$1$};
		    \draw (20, 12) node{$2$};
		    \draw (22, 6) node{$0$};
		    \draw (22, 8) node{$0$};
		    \draw (22, 10) node{$1$};
		    \draw (22, 12) node{$2$};
		    \draw (24, 8) node{$0$};
		    \draw (24, 10) node{$1$};
		    \draw (24, 12) node{$2$};
		    \draw (26, 10) node{$0$};
		    \draw (26, 12) node{$1$};
		    \draw (28, 12) node{$0$};
		\end{tikzpicture}
		\caption{The maximum address that is zero on the ideal $\IDEAL$ of Example \ref{ex:ideal}.}
		\label{f:maximumalcove}
\end{figure}
To apply Lemma \ref{l:sommers} to $\IDEAL_\pi$ requires that $\IDEAL_\pi$ contain all the simple roots. 
\begin{lemma}
\label{l:existcoordinate}
Let $\pi \in S_{n+1}$. Let $m_i = \mathrm{min}\{\pi(i),\pi(i+1)\}$ and $M_i = \mathrm{max}\{\pi(i),\pi(i+1)\}$. For any $j \in [n]$, there exists $i \in [n]$ such that $m_i \leq j < M_i$. Thus every $(j,j+1) \in \Delta$ is in $\IDEAL_\pi$. 
\end{lemma}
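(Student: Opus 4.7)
The plan is to use a discrete intermediate-value argument on the sequence $\pi(1), \pi(2), \ldots, \pi(n+1)$. Fix $j \in [n]$, and partition the value set $[n+1]$ into the two nonempty blocks $A = \{1,\ldots,j\}$ and $B = \{j+1,\ldots,n+1\}$. Because $\pi$ is a bijection onto $[n+1]$, the sequence of values $\pi(1),\pi(2),\ldots,\pi(n+1)$ contains elements of both blocks. Therefore, as we scan consecutive pairs, there must exist an index $i \in [n]$ such that $\pi(i)$ and $\pi(i+1)$ lie in opposite blocks; otherwise the whole sequence would lie in a single block.

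For such an $i$, one of $\pi(i),\pi(i+1)$ is at most $j$ and the other is at least $j+1$, which is precisely the statement $m_i \leq j < M_i$. This proves the first assertion.

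For the second assertion, I would invoke Definition \ref{d:ideal}. The pair $(m_i,M_i)$ is a consecutive root for $\pi$, so $(m_i,M_i) \in C_\pi$. The inequalities $m_i \leq j$ and $j+1 \leq M_i$ translate into the containment $[j,j+1] \subseteq [m_i,M_i]$, which is exactly the statement $(j,j+1) \leq (m_i,M_i)$ in the root poset. Hence $(j,j+1) \in \IDEAL_\pi$ by the down-closure definition of $\IDEAL_\pi$.

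I do not anticipate any real obstacle: the argument is a one-line pigeonhole/intermediate value observation, followed by unpacking the definition of the root ideal. The only thing to be careful about is the endpoint conventions, namely that $m_i \leq j < M_i$ corresponds to $m_i \leq j$ and $M_i \geq j+1$, which is exactly the condition that $(j,j+1)$ sits below $(m_i,M_i)$ in the root poset.
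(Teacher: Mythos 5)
Your proof is correct and rests on the same underlying observation as the paper's: the sequence $\pi(1),\ldots,\pi(n+1)$ must, at some consecutive pair, cross from the block $\{1,\ldots,j\}$ to the block $\{j+1,\ldots,n+1\}$, giving $m_i \leq j < M_i$. The paper packages this as a proof by contradiction anchored at the position where the value $j$ occurs, whereas your direct two-block pigeonhole formulation is arguably cleaner; your deduction that $(j,j+1)\leq(m_i,M_i)$ in the root poset, hence $(j,j+1)\in\IDEAL_\pi$, unpacks Definition \ref{d:ideal} correctly.
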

\begin{proof}
Suppose otherwise. Let $k$ be such that $\pi(k) = j$. Then both $\pi(k-1)$ and $\pi(k+1)$, if defined, must be greater than $j$. If there exists an index $b$ such that $\pi(b) > j$ and $\pi(b+1) < j$, or vice versa, then $i = b$ is such that $m_i \leq j < M_i$. Thus, to the left of $k - 1$ and to the right of $k + 1$, the values must stay above $j$. Since $\pi$ is a permutation, this implies $j = 1$. However, one of $\pi(k-1)$ or $\pi(k+1)$ is defined, and $\pi(k)$ is the minimum of the two values, which implies $m_i \leq j < M_i$ for either $i = k - 1$ or $i = k$. 
\end{proof}
\begin{corollary}
Let $f = \frac{1}{2}\Chi_{[-1,1]}$ be the uniform density function on $[-1,1]$. Let $\tau,\pi \in S_{n+1}$. Suppose $\IDEAL_\pi \subseteq \IDEAL_\tau$. Then
\[
\PRB(f,\pi) \geq \PRB(f,\tau).
\] 
\end{corollary}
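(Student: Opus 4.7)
The plan is to apply Proposition \ref{p:computeuniform} and reinterpret the ``zero-on-certain-pairs'' condition there as ``zero on $\IDEAL_\pi$.'' Once that identification is made, the corollary becomes a one-line monotonicity observation: enlarging the set on which the address $k$ is forced to vanish can only shrink the count $K_\pi$.

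First I would verify that the set of $(i,j) \in \Phi^+$ for which some $c$ satisfies $\pi(c) \leq i < j \leq \pi(c+1)$ or $\pi(c+1) \leq i < j \leq \pi(c)$ is exactly $\IDEAL_\pi$. Unwinding Definition \ref{d:ideal}, a pair $(i',j') \in \Phi^+$ lies in $\IDEAL_\pi$ precisely when $(i',j') \leq (i,j)$ in the root poset for some consecutive root $(i,j) = (m_c,M_c)$, where $m_c = \min\{\pi(c),\pi(c+1)\}$ and $M_c = \max\{\pi(c),\pi(c+1)\}$. Since the root poset order is the reverse interval-inclusion order $[i',j'] \subseteq [i,j]$, this is equivalent to the existence of $c$ with $m_c \leq i' < j' \leq M_c$, which is the condition used in Proposition \ref{p:computeuniform}.

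Next I would conclude the argument by counting. With the identification above, Proposition \ref{p:computeuniform} states that $K_\pi$ counts functions $k:\Phi^+ \to \N$ satisfying Shi's inequalities and the additional constraint $k(i,j) = 0$ for all $(i,j) \in \IDEAL_\pi$. If $\IDEAL_\pi \subseteq \IDEAL_\tau$, then every function vanishing on $\IDEAL_\tau$ automatically vanishes on $\IDEAL_\pi$, so the set of admissible functions for $\tau$ is a subset of the set for $\pi$. Hence $K_\tau \leq K_\pi$, and dividing by $2^n n!$ gives $\PRB(f,\tau) \leq \PRB(f,\pi)$.

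There is no real obstacle beyond the bookkeeping in the first step. The only care required is in confirming that the two equivalent descriptions of the constraints (``$k = 0$ on pairs sitting inside some $[m_c, M_c]$'' versus ``$k = 0$ on $\IDEAL_\pi$'') match, and noting that Lemma \ref{l:existcoordinate} ensures $\IDEAL_\pi$ contains all simple roots so that Proposition \ref{p:computeuniform} applies to both $\pi$ and $\tau$ without degeneracy.
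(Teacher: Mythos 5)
Your argument is correct, and it takes a genuinely different route from the paper. The paper proves the corollary through the weak-order machinery: it invokes Lemma \ref{l:sommers} (Sommers) to produce the unique maximum addresses $k_{\IDEAL_\pi}$ and $k_{\IDEAL_\tau}$, observes that $\IDEAL_\pi \subseteq \IDEAL_\tau$ forces $k_{\IDEAL_\tau} \leq k_{\IDEAL_\pi}$ pointwise, and then concludes via Corollary \ref{c:weakcharacter2} that the corresponding weak order intervals are nested. You instead bypass Sommers' lemma and the weak order entirely: after checking that the vanishing condition in Proposition \ref{p:computeuniform} is exactly ``$k=0$ on $\IDEAL_\pi$'' (your unwinding of Definition \ref{d:ideal} is right, though note the root poset order is interval inclusion $[i',j'] \subseteq [i,j]$, not its reverse --- a terminological slip that does not affect the condition $m_c \leq i' < j' \leq M_c$ you actually use), the inequality $K_\tau \leq K_\pi$ is an immediate set containment among the admissible addresses. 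Your approach is more elementary and self-contained, needing only Shi's theorem and the alcove-counting formula; the paper's approach is heavier but is the same machinery it needs anyway to prove Theorem \ref{t:weakordertheorem}, of which the corollary then becomes a near-trivial consequence. Your remark that Lemma \ref{l:existcoordinate} is needed is harmless but actually superfluous for your route: the containment of admissible sets holds regardless of whether the ideals contain all simple roots (that hypothesis matters only for applying Lemma \ref{l:sommers}).
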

\begin{proof}
Lemma \ref{l:existcoordinate} implies that $\IDEAL_\pi$ and $\IDEAL_\tau$ contain all the simple roots. The hypothesis $\IDEAL_\pi \subseteq \IDEAL_\tau$ implies that $k_{\IDEAL_\tau}(i,j) = 0$ whenever $k_{\IDEAL_\pi}(i,j) = 0$. Lemma \ref{l:sommers} implies that $k_{\IDEAL_\tau}(i,j) \leq k_{\IDEAL_\pi}(i,j)$ for all $(i,j) \in \Phi^+$. The result then follows from Corollary \ref{c:weakcharacter2}.
\end{proof}
\begin{theorem}
\label{t:weakordertheorem}
Let $f = \frac{1}{2}\Chi_{[-1,1]}$ be the uniform density function on $[-1,1]$. Let $\pi \in S_{n+1}$ and let $\IDEAL_\pi$ be the root ideal of $\pi$. Let the address of $w \in \A_n$ be given by $k_w(i,j) = (i,j)_{\IDEAL_\pi}$ for all $(i,j) \in \Phi^+$. Then,
\[
\PRB(f,\pi) = \frac{|[1,w]|}{2^n n!},
\]
where $[1,w]$ consists of all $v \in \A_n$ such that $v \leq w$ in the weak order on $\A_n$.
\end{theorem}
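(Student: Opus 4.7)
The plan is to show that the alcoves contained in $P_\pi$ are in bijection with the elements of the weak order interval $[1,w]$, then invoke Corollary \ref{c:polytopevolume}. Everything reduces to translating between three things: the defining inequalities of $P_\pi$, the addresses of alcoves satisfying $k(i,j)=0$ on $\IDEAL_\pi$, and the weak order characterization given in Corollary \ref{c:weakcharacter2}.

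First I would argue that an alcove $\alc$ is contained in $P_\pi$ if and only if its address $k_\alc$ satisfies $k_\alc(i,j)=0$ for every $(i,j)\in\IDEAL_\pi$. The forward implication is immediate: if $\alc\subseteq P_\pi$, then in particular for each consecutive root $(m_i,M_i)\in C_\pi$ the defining inequality $0\le x_{m_i}+\cdots+x_{M_i-1}\le 1$ forces $k_\alc(m_i,M_i)=0$, and Lemma \ref{l:ideal} then propagates this vanishing to all of $\IDEAL_\pi$. For the converse, suppose $k_\alc$ vanishes on $\IDEAL_\pi$. Since Lemma \ref{l:existcoordinate} places every simple root in $\IDEAL_\pi$, we get $0<x_j<1$ for each coordinate, which supplies the inequalities $x_j\ge 0$ and in particular ensures $k_\alc$ is nonnegative on $\Phi^+$. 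The vanishing on each $(m_i,M_i)\in C_\pi$ then yields the remaining inequalities defining $P_\pi$, so $\alc\subseteq P_\pi$.

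Next I would apply Sommers' result: Lemma \ref{l:existcoordinate} confirms that $\IDEAL_\pi$ contains all the simple roots, so Lemma \ref{l:sommers} produces a unique maximum address $k_{\IDEAL_\pi}(i,j)=(i,j)_{\IDEAL_\pi}$ among those vanishing on $\IDEAL_\pi$. By Lemma \ref{l:affineprelims}(i) this maximum address is $k_w$ for a unique $w\in\A_n$, and this is exactly the $w$ named in the theorem. An alcove address $k_v$ satisfies $k_v(i,j)=0$ on $\IDEAL_\pi$ if and only if $k_v\le k_w$ pointwise, because both $k_v$ and $k_w$ are nonnegative on $\Phi^+$ and $k_w=0$ precisely on $\IDEAL_\pi$. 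Since all the addresses in play are nonnegative, Corollary \ref{c:weakcharacter2} identifies $\{v\in\A_n:k_v\le k_w\}$ with the weak order interval $[1,w]$.

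Combining the previous two paragraphs, the alcoves contained in $P_\pi$ are in bijection with $[1,w]$, so the count $K_\pi$ from Corollary \ref{c:polytopevolume} equals $|[1,w]|$, yielding $\PRB(f,\pi)=|[1,w]|/(2^n n!)$. The main obstacle is the careful back-and-forth in the first step between the polytope inequalities and the alcove addresses, especially extracting the coordinate positivity $x_j>0$ from the assumption that $k_\alc$ vanishes on $\IDEAL_\pi$; this is where Lemma \ref{l:existcoordinate} is indispensable, and without it one could not conclude that the two descriptions of alcoves match on the nose.
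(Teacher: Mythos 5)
Your argument is correct and follows the same route as the paper's (two-sentence) proof: identify the alcoves in $P_\pi$ with the addresses vanishing on $\IDEAL_\pi$, invoke Lemma \ref{l:existcoordinate} to apply Sommers' Lemma \ref{l:sommers}, and translate via Corollary \ref{c:weakcharacter2} and Corollary \ref{c:polytopevolume}. In fact you supply more detail than the paper does, in particular the observation that vanishing on the simple roots forces every address in play to be nonnegative, which is exactly what makes Corollary \ref{c:weakcharacter2} applicable.
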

\begin{proof}
Lemma \ref{l:existcoordinate} implies that we may apply Lemma \ref{l:sommers} to $\IDEAL_\pi$. The result then follows from Lemma \ref{l:sommers} and Corollary \ref{c:weakcharacter2}.
\end{proof}
Weak order intervals of $\A_n$ satisfy condition $(1)$ of \cite[Proposition 3.5]{lam-postnikov2}, which implies $P_\pi$ is an \emph{alcoved polytope} in the sense of \cite{lam-postnikov1} and \cite{lam-postnikov2}. Thus \cite[Theorem 3.2]{lam-postnikov1} provides yet another computational approach to calculating the volume of $P_\pi$, although we do not pursue that approach in this paper. 

The next proposition is somewhat surprising, in the sense that two consecutive entries of $\pi$ can completely determine $\PRB(f,\pi)$. This is not the case for the Laplace or normal density functions, and it is reasonable to suspect that a typical density function does not exhibit this property. 
\begin{proposition}
\label{p:minimumprob}
Let $f = \frac{1}{2}\Chi_{[-1,1]}$ be the uniform density function on $[-1,1]$. Let $\pi \in S_{n+1}$. Then $1 (n + 1)$ or $(n + 1) 1$ occur in consecutive positions in the $1$-line notation for $\pi$ if and only if
\[
\PRB(f,\pi) = \frac{1}{2^n n!}
\]
\end{proposition}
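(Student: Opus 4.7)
The plan is to apply Theorem \ref{t:weakordertheorem} to reduce the claim to a statement about the root ideal $\IDEAL_\pi$, then unpack that statement using the structure of the root poset. By Theorem \ref{t:weakordertheorem}, the equality $\PRB(f,\pi) = \frac{1}{2^n n!}$ is equivalent to $|[1,w]| = 1$, which means $w$ is the identity of $\A_n$. By Definition \ref{d:alcovelabel}, this happens exactly when $\alc_w = \alc_\circ$, i.e., when the address $k_w$ is identically zero. By Lemma \ref{l:sommers}, $k_w(i,j) = (i,j)_{\IDEAL_\pi}$, and by Definition \ref{d:rootideal} this quantity vanishes precisely when $(i,j) \in \IDEAL_\pi$. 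Therefore $\PRB(f,\pi) = \frac{1}{2^n n!}$ if and only if $\IDEAL_\pi = \Phi^+$.

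The next step is to characterize when $\IDEAL_\pi$ equals all of $\Phi^+$. Under the root poset order $(i',j') \leq (i,j) \iff [i',j'] \subseteq [i,j]$, the element $(1,n+1) \in \Phi^+$ is the unique maximum. Since $\IDEAL_\pi$ is by definition the down-closure of the set $C_\pi$ of consecutive roots, having $\IDEAL_\pi = \Phi^+$ forces $(1,n+1) \in \IDEAL_\pi$, and by maximality this requires $(1,n+1) \in C_\pi$ itself. Conversely, if $(1,n+1) \in C_\pi$, then every element of $\Phi^+$ lies below $(1,n+1)$ in the root poset and hence lies in $\IDEAL_\pi$. So $\IDEAL_\pi = \Phi^+$ if and only if $(1,n+1) \in C_\pi$.

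Finally, by Definition \ref{d:ideal}, $(1,n+1) \in C_\pi$ means there exists $i \in [n]$ with $\{\pi(i),\pi(i+1)\} = \{1,n+1\}$, which is exactly the statement that $1(n+1)$ or $(n+1)1$ appears in consecutive positions of the $1$-line notation for $\pi$. Composing the three equivalences yields the proposition. There is no real obstacle here, as all the nontrivial machinery (the weak order characterization of Corollary \ref{c:weakcharacter2}, the maximality result of Lemma \ref{l:sommers}, and the volume computation of Theorem \ref{t:weakordertheorem}) is already in place; the one genuine observation required is that $(1,n+1)$ is the maximum element of the root poset $\Phi^+$, which is immediate from the interval-containment description of the order.
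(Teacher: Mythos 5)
Your proof is correct and follows essentially the same route as the paper: both reduce via Theorem \ref{t:weakordertheorem} and Lemma \ref{l:sommers} to the question of whether $\IDEAL_\pi = \Phi^+$, and both settle that by observing that the highest root $\epsilon_1 - \epsilon_{n+1}$ is the maximum of the root poset, so it lies in $\IDEAL_\pi$ exactly when it is itself a consecutive root. Your presentation as a single chain of equivalences is slightly cleaner than the paper's two-direction argument, but the content is the same.
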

\begin{proof}
If $1$ and $n + 1$ are consecutive in the $1$-line notation, then $\IDEAL_\pi$ is all of $\Phi^+$. Thus $w = 1$ in Theorem \ref{t:weakordertheorem}, which implies there is only one element of $\A_n$ in the interval $[1,w]$.

Conversely, if there are no consecutive occurrences of $1$ and $(n+1)$, then the ideal $\IDEAL_\pi$ does not contain $(1,n)$. Thus $(1,n)$ is the join of at least $2$ elements of $\IDEAL_\pi$, which implies $k_{\IDEAL_\pi}(1,n) > 0$. This implies $[1,w]$ contains more than one element.
\end{proof}
\section{Pattern probability comparisons for the normal distribution}
\label{s:normaldistribution}
\begin{figure}[htb] \label{f:edgediagram2}
	\centering
		\begin{tikzpicture}[scale=.5]
			\foreach \x in {1,2,...,6}
			\draw[dotted] (0,\x)--(9,\x);

			\draw(15.5,3.5) node{$\implies \lev(\pi) = \begin{bmatrix} 2 & 2 & 1 & 1 & 0\\ 0 & 4 & 3 & 2 & 1\\ 0 & 0 & 3 & 2 & 1\\ 0 & 0 & 0 & 2 & 1\\0 & 0 & 0 & 0 & 2 \end{bmatrix}$};

			\draw[thick,*->-*](4.5,3) node[left]{3}--(4.5,1);
			\draw[thick,*->-*](5.5,1) node[left]{1}--(5.5,5);
			\draw[thick,*->-*](6.5,5) node[left]{5}--(6.5,6);
			\draw[thick,*->-*](7.5,6) node[left]{6}--(7.5,2);
			\draw[thick,*->-*](8.5,2) node[left]{2}--(8.5,4) node[right]{4};
			
			\draw(0.5,7) node{Levels};
			\draw(6.0,8) node{Edge diagram};
			\draw(6.0,7) node{for $\pi = 315624$};
			
			{\foreach \x in {1,2,...,5}
			\draw (0.5,0.5+\x) node{$\ubar{\x}$};
			}
		\end{tikzpicture}
		\caption{The edge diagram for a permutation and its level count matrix. Figure adapted from \cite{elizalde-martinez}.}
\end{figure}
As Zare \cite{zare} suggested, when $f$ is a normal distribution, we calculate $\PRB(f,\pi)$ by finding the volume of a spherical simplex. General equations exist to compute such volumes. See \cite{aomoto} or \cite{ribando}, for example. However, they appear to be computationally intensive, as is Lemma \ref{l:general} when it is applied to the normal distribution. Nonetheless, there are a few direct comparisons we can make involving alcoves and levels of the edge diagram.

Recall that the alcoves of Section \ref{s:affineA} are simplices of volume $1/n!$, by Corollary \ref{c:polytopevolume}. For a given origin-centered ball $B$ in $\R^n$, we obtain an underestimate for $\PRB(f,\pi)$ by counting all alcoves in $D_\pi$ that are fully contained in $B$. Similarly, we obtain an overestimate for $\PRB(f,\pi)$ by counting all alcoves in $D_\pi$ that intersect $B$ or are fully contained in $B$. The address of an alcove and the radius of the ball suffice to determine whether an alcove is fully contained in $B$ or intersects $B$ or is disjoint from $B$. 
\begin{proposition} 
\label{p:bounds}
Let $B$ be an origin-centered ball in $\R^n$. Let $m_\pi$ be the number of alcoves fully contained in $D_\pi$ and $B$. Let $M_\pi$ be the number of alcoves fully contained in $D_\pi$ that have nonempty intersection with $B$. Then
\[
\frac{m_\pi}{n! \text{ volume}(B)} \leq \PRB(f,\pi) \leq \frac{M_\pi}{n! \text{ volume}(B)}.
\]
\end{proposition}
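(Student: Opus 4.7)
The plan is to translate $\PRB(f,\pi)$ into a ratio of Euclidean volumes using rotational symmetry of the joint density, and then sandwich that volume between the alcove counts $m_\pi$ and $M_\pi$.

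First I would observe that when $f$ is a mean-zero normal density, the joint density $g(\bm{x}) = \prod_{i=1}^n f(x_i)$ depends only on $x_1^2 + \cdots + x_n^2$ and so is rotationally invariant on $\R^n$. Next, by Lemma \ref{l:image} we have $D_\pi = L_\pi(\R_{>0}^n)$, and since $L_\pi$ is linear and $\R_{>0}^n$ is a cone through the origin, $D_\pi$ itself is an open cone through the origin. Combining these two facts via spherical coordinates, for any open cone $C$ through the origin and any origin-centered ball $B$,
\[
\int_C g(\bm{x}) \, \D \bm{x} = \frac{\text{vol}(C \cap B)}{\text{vol}(B)},
\]
because both sides equal the fraction of the unit sphere contained in $C$. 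Specializing to $C = D_\pi$ and invoking Lemma \ref{l:general} converts the probability into
\[
\PRB(f,\pi) = \frac{\text{vol}(D_\pi \cap B)}{\text{vol}(B)}.
\]

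The remaining step is to bound $\text{vol}(D_\pi \cap B)$ by the alcove counts. The affine type-$A$ arrangement refines $\CS_n$ (its $H_{ij}^0$ hyperplanes coincide with those of $\CS_n$), so by Lemma \ref{l:regions} each $D_\pi$ is, up to a measure-zero boundary, a disjoint union of the alcoves contained in it, and by Corollary \ref{c:polytopevolume} each such alcove has volume $1/n!$. The $m_\pi$ alcoves counted in the lower bound lie entirely inside $D_\pi \cap B$, so they contribute at least $m_\pi/n!$ to its volume. Conversely, every point of $D_\pi \cap B$ lies in some alcove of $D_\pi$ that meets $B$, and each such alcove contributes at most its full volume $1/n!$; summing gives $\text{vol}(D_\pi \cap B) \leq M_\pi/n!$. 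Dividing both inequalities by $\text{vol}(B)$ and substituting the identity from the previous paragraph yields the proposition.

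I expect no serious obstacle in this argument. The only nonroutine ingredient is the volume-ratio identity for rotationally invariant integrals over cones, which is a standard consequence of switching to spherical coordinates; the remainder is bookkeeping about the alcove decomposition of $D_\pi$ and how each alcove's contribution to $D_\pi \cap B$ is at most $1/n!$ and exactly $1/n!$ when the alcove is fully contained in $B$.
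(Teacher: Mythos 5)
Your argument is correct and is exactly the justification the paper sketches in the prose surrounding the proposition: rotational invariance of the mean-zero Gaussian joint density reduces $\PRB(f,\pi)$ to the volume fraction $\mathrm{vol}(D_\pi \cap B)/\mathrm{vol}(B)$, and the alcove decomposition of $D_\pi$ (each alcove having volume $1/n!$ by Corollary \ref{c:polytopevolume}) sandwiches that volume between $m_\pi/n!$ and $M_\pi/n!$. The paper states the proposition without a formal proof, so your write-up simply supplies the details of the same approach.
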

Note that hypercubes with integer-valued vertices could be used instead of alcoves, but one would need to determine whether the hypercube is fully contained in $D_\pi$ or intersects $D_\pi$ or is disjoint from $D_\pi$. For alcoves, this is directly determined from the alcove's address.

For $\pi \in S_{n+1}$, we defined $\lev(\pi)$ on $[n]$ to measure how often a value lies between two consecutive values of $\pi$. We extend the definition of $\lev(\pi)$ to arbitrary pairs of $[n]$.
\begin{definition}
\label{d:levelmatrix}
Let $\pi \in S_{n+1}$. Denote the number of positions $k$ such that $\pi(k) \leq i,j < \pi(k+1)$ or $\pi(k+1) \leq i,j < \pi(k)$ by $\lev(\pi)_{i,j}$. Note that $\lev(\pi)_{i,i}$ is the same as $\lev(\pi)_i$ defined in Definition \ref{d:levelcount}.
\end{definition}
The measure of the spherical simplex that determines $\PRB(f,\pi)$ for the normal distribution is completely determined by the values of $\lev(\pi)$, as will be seen in the proof of Theorem \ref{t:comparegaussian}. Although such measures may be difficult to calculate, we can sometimes use $\lev(\pi)$ and $\lev(\tau)$ to compare $\PRB(f,\pi)$ and $\PRB(f,\tau)$. 

Recall that Lemma \ref{l:general} expresses $\PRB(f,\pi)$ as $\int_{D_\pi} g(L_\pi(\bm{x})) \D \bm{x}$, where $g$ is the joint density function defined by $g(x_1,\ldots,x_n) = f(x_1)f(x_2)\cdots f(x_n)$.
\begin{theorem} 
\label{t:comparegaussian}
Let $f:\R \ra \R$ be the density function for a normal distribution with mean zero and any variance. Let $\pi, \tau \in S_{n+1}$ and suppose $\lev(\pi)_{i,j} \leq \lev(\tau)_{i,j}$ for all $(i,j) \in \Phi^+$. Then $\PRB(f,\pi) \geq \PRB(f,\tau)$.
\end{theorem}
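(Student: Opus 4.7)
The plan is to reduce $\PRB(f,\pi)$ to an integral of a single Gaussian-type exponential over the positive orthant, with the quadratic form in the exponent controlled entrywise by the matrix $\lev(\pi)$. Once that reduction is in place, the hypothesis $\lev(\pi)_{i,j} \leq \lev(\tau)_{i,j}$ will yield a pointwise inequality of integrands on $\R_{>0}^n$, and monotonicity of the integral finishes the argument.

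First, apply Lemma \ref{l:general} with $f(x) = (2\pi\sigma^2)^{-1/2} \exp(-x^2/(2\sigma^2))$ to write
\[
\PRB(f,\pi) = \frac{1}{(2\pi\sigma^2)^{n/2}} \INT_{\R_{>0}^n} \EXP{-\frac{1}{2\sigma^2} \sum_{i=1}^n L_\pi(\bm{x})_i^2} \; \D \bm{x}.
\]
The quadratic form in the exponent expands as
\[
\sum_{i=1}^n L_\pi(\bm{x})_i^2 = \sum_{j,k=1}^n \left( \sum_{i=1}^n (L_\pi)_{ij} (L_\pi)_{ik} \right) x_j x_k.
\]
The crucial observation is that within a single row $i$ of $L_\pi$, every nonzero entry carries the same sign: $(L_\pi)_{ij} = +1$ for all levels between $\pi(i)$ and $\pi(i+1)$ when $\pi(i) < \pi(i+1)$, and $-1$ for all of them when $\pi(i+1) < \pi(i)$. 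Therefore $(L_\pi)_{ij}(L_\pi)_{ik}$ is either $0$ or $+1$, never $-1$, and it equals $1$ exactly when both levels $\ubar{j}$ and $\ubar{k}$ are contained in edge $e_i$. Summing over $i$ gives precisely $\lev(\pi)_{j,k}$ as defined in Definition \ref{d:levelmatrix}. Consequently,
\[
\sum_{i=1}^n L_\pi(\bm{x})_i^2 = \sum_{j,k=1}^n \lev(\pi)_{j,k} \, x_j x_k.
\]

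Now the hypothesis enters directly. For $\bm{x} \in \R_{>0}^n$ every product $x_j x_k$ is strictly positive, so the entrywise inequality $\lev(\pi)_{j,k} \leq \lev(\tau)_{j,k}$ (which holds on $\Phi^+$ and automatically on the diagonal and by symmetry throughout, since $\lev(\cdot)_{j,k} = \lev(\cdot)_{k,j}$) gives
\[
\sum_{j,k} \lev(\pi)_{j,k} \, x_j x_k \;\leq\; \sum_{j,k} \lev(\tau)_{j,k} \, x_j x_k
\qquad \text{for every } \bm{x} \in \R_{>0}^n.
\]
Since $t \mapsto \EXP{-t/(2\sigma^2)}$ is decreasing, the integrand for $\pi$ dominates the integrand for $\tau$ pointwise on $\R_{>0}^n$, and integrating yields $\PRB(f,\pi) \geq \PRB(f,\tau)$.

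The only real content is the identification of $\sum_i L_\pi(\bm{x})_i^2$ with $\sum_{j,k} \lev(\pi)_{j,k} x_j x_k$, and the main thing to watch there is the sign-uniformity within a row of $L_\pi$, which guarantees that no cancellations occur among the cross terms and that the coefficient $\sum_i (L_\pi)_{ij}(L_\pi)_{ik}$ genuinely counts the edges containing both levels rather than a signed count. After that, the argument is entirely a pointwise comparison in the positive orthant, with no need to evaluate any of the integrals explicitly, and it does not depend on the variance $\sigma^2$.
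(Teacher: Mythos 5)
Your argument is, in substance, identical to the paper's. The paper expands each Gaussian factor $f(\pm(x_a+\cdots+x_b))$ into $\EXP{-\sum_k x_k^2}\cdot\EXP{-\sum 2x_kx_{k'}}$ and counts that $-x_j^2$ appears $\lev(\pi)_{j,j}$ times and $-2x_ix_j$ appears $\lev(\pi)_{i,j}$ times; you perform the same computation in matrix form by expanding $\sum_i L_\pi(\bm{x})_i^2 = \sum_{j,k}\bigl(\sum_i (L_\pi)_{ij}(L_\pi)_{ik}\bigr)x_jx_k$ and using the sign-uniformity within each row of $L_\pi$ to identify the coefficients with $\lev(\pi)_{j,k}$. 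That identification is correct, and the rest (positivity of $x_jx_k$ on the orthant, monotonicity of $t\mapsto e^{-t}$) matches the paper exactly.

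The one flaw is your parenthetical claim that the hypothesis, stated only for $(i,j)\in\Phi^+$ (i.e.\ $i<j$), ``automatically'' yields the diagonal comparison $\lev(\pi)_{j,j}\leq\lev(\tau)_{j,j}$. It does not. For example, with $\pi=213$ and $\tau=132$ in $S_3$ one has $\lev(\pi)_{1,2}=\lev(\tau)_{1,2}=1$, yet $\lev(\pi)_{1,1}=2>1=\lev(\tau)_{1,1}$; so the off-diagonal inequalities carry no information about the diagonal, and without the diagonal inequality the pointwise domination of integrands fails (here neither integrand dominates the other). Your proof genuinely needs $\lev(\pi)_{j,j}\leq\lev(\tau)_{j,j}$ as part of the hypothesis. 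To be fair, the paper's own proof silently assumes the comparison ``for all $(i,j)$,'' and the subsequent definition of $\leq_{\mathrm{lev}}$ in Corollary \ref{c:comparegaussian} quantifies over all $i,j\in[n]$ including the diagonal, so the intended hypothesis evidently includes $i=j$. With that reading your proof is complete; as written, you should delete the ``automatically on the diagonal'' claim and instead state the diagonal inequality as part of what is assumed.
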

\begin{proof}
By scale invariance (Lemma \ref{l:scale-invariance}), we may assume $f$ is given by $f(x) = Ke^{-x^2}$ for some $K \in \R$. Every factor in Lemma \ref{l:general} has the form $f(\pm(x_a + \cdots + x_b))$. We have
\begin{align*}
f(\pm(x_a + \cdots + x_b)) &= \EXP{-(x_a + \cdots + x_b)^2}\\
                          &= \EXP{-\SUM_{k=a}^b x_k^2} \; \cdot \; \EXP{-\sum 2 x_k x_{k'}},
\end{align*}
where the sum in the second exponential is over all pairs between $a$ and $b$ (inclusive). By Definition \ref{d:levelmatrix}, there are $\lev(\pi)_{j,j}$ factors contributing one term of the form $-x_j^2$ and $\lev(\pi)_{i,j}$ factors contributing one term of the form $-2x_i x_j$ to the overall product of exponentials. Thus, if $\lev(\pi)_{i,j} \leq \lev(\tau)_{i,j}$ for all $(i,j)$, the integrand in Lemma \ref{l:general} for $\pi$ is always at least as large as the integrand for $\tau$. 
\end{proof}
In \cite[Lemma 2.3]{elizalde-martinez}, Elnitsky and Martinez showed that if $L_\pi$ can be obtained from $L_\tau$ by a permutation of rows and columns, then $\PRB(f,\pi) = \PRB(f,\tau)$ for any choice of density function $f$, symmetric or otherwise. By including their guaranteed equalities, we obtain more comparable pairs of permutations than what is guaranteed by Theorem \ref{t:comparegaussian}. 

Write $\pi \equiv \tau$ if $L_\pi$ can be obtained from $L_\tau$ by a permutation of rows and columns. Write $\pi \leq_{\mathrm{lev}} \tau$ if $\lev(\pi)_{i,j} \leq \lev(\tau)_{i,j}$ for all $i,j \in [n]$. Define $\leqslant$ as the transitive closure of of the relation $\equiv$ and the partial order $\leq_{\mathrm{lev}}$. We then have a broader collection of comparable pairs of permutations for the normal distribution.
\begin{corollary} 
\label{c:comparegaussian}
Let $f$ be a normal distribution with mean zero. Let $\pi, \tau \in S_{n+1}$. If $\pi \leqslant \tau$, then $\PRB(f,\pi) \geq \PRB(f,\tau)$.
\end{corollary}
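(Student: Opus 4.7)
The plan is to chain together Theorem \ref{t:comparegaussian} and the Elizalde--Martinez equality lemma \cite[Lemma 2.3]{elizalde-martinez}, invoking each at the appropriate steps of the chain produced by the transitive closure definition of $\leqslant$. Since $\leqslant$ is defined as the transitive closure of the union of $\equiv$ and $\leq_{\mathrm{lev}}$, the hypothesis $\pi \leqslant \tau$ supplies a finite sequence $\pi = \sigma_0, \sigma_1, \ldots, \sigma_k = \tau$ in $S_{n+1}$ such that for each $\ell \in \{0,1,\ldots,k-1\}$, either $\sigma_\ell \equiv \sigma_{\ell + 1}$ or $\sigma_\ell \leq_{\mathrm{lev}} \sigma_{\ell + 1}$.

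For each single step I would handle the two cases separately. If $\sigma_\ell \equiv \sigma_{\ell + 1}$, then $L_{\sigma_\ell}$ and $L_{\sigma_{\ell+1}}$ differ by a permutation of rows and columns, so \cite[Lemma 2.3]{elizalde-martinez} delivers the equality $\PRB(f, \sigma_\ell) = \PRB(f, \sigma_{\ell + 1})$ (and in fact for any density function, symmetric or not). If instead $\sigma_\ell \leq_{\mathrm{lev}} \sigma_{\ell + 1}$, then $\lev(\sigma_\ell)_{i,j} \leq \lev(\sigma_{\ell+1})_{i,j}$ for all $i,j \in [n]$ and in particular for all $(i,j) \in \Phi^+$, so Theorem \ref{t:comparegaussian} directly yields $\PRB(f, \sigma_\ell) \geq \PRB(f, \sigma_{\ell + 1})$. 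Composing these local relations along the chain produces
\[
\PRB(f, \pi) = \PRB(f, \sigma_0) \geq \PRB(f, \sigma_1) \geq \cdots \geq \PRB(f, \sigma_k) = \PRB(f, \tau),
\]
which is the desired conclusion.

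There is no substantive obstacle here: the argument is essentially the transitive-closure unpacking followed by two already-available facts. The only point requiring any care is the observation that the definition of $\leqslant$ as a closure of a \emph{union} of two relations means that a chain can alternate freely between $\equiv$ steps and $\leq_{\mathrm{lev}}$ steps, so the bookkeeping must allow both equalities and inequalities to appear intermixed; but this is immediate from the definition, and Theorem \ref{t:comparegaussian} together with \cite[Lemma 2.3]{elizalde-martinez} provide precisely the two kinds of local comparisons the chain needs.
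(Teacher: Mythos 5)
Your proof is correct and is essentially the argument the paper intends: unpack the transitive closure of $\equiv$ and $\leq_{\mathrm{lev}}$ into a finite chain, apply \cite[Lemma 2.3]{elizalde-martinez} to the $\equiv$ steps and Theorem \ref{t:comparegaussian} to the $\leq_{\mathrm{lev}}$ steps, and compose. The paper leaves this as an immediate consequence of the preceding discussion, so no further comment is needed.
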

\section{Concluding remarks and problems} 
\label{s:postscript}
In \cite{zare}, Zare asks which permutations occur most frequently in random walks with a normal or uniform distribution of mean zero for its steps. Our results provide an imprecise heuristic: permutations with large consecutive changes in its $1$-line notation are less likely to occur than permutations with small consecutive changes. In other words, for permutations where $\lev(\pi)_{i,j}$ is large, we expect $\PRB(f,\pi)$ to be small, and vice versa, for a large class of symmetric density functions of a continuous probability distribution.

As a general problem, we would like to know what general hypotheses are needed to prove $\PRB(f,\pi) \geq \PRB(f,\tau)$ whenever $\lev(\pi)_{i,j} \leq \lev(\tau)_{i,j}$ for all $(i,j) \in \Phi^+$. However, this question is probably too open-ended. We have evidence for the following more precise conjecture.
\\ \\
\textbf{Conjecture:} Let $f:\R \ra \R$ be a density function that is log-concave on $\R_{>0}$ and symmetric on $\R$. Let $\pi,\tau \in S_{n+1}$ and suppose $\lev(\pi)_{i,j} \leq \lev(\tau)_{i,j}$ for all $(i,j) \in \Phi^+$. Then $\PRB(f,\pi) \geq \PRB(f,\tau)$.
\\ \\
From the perspective of computation, Proposition \ref{p:computeuniform} provides a direct approach to computing ordinal pattern probabilities when the steps are uniform. Theorem \ref{t:weakordertheorem} reduces the problem to finding the size of a weak order interval in $\A_n$. A lot is known about these intervals, which is enough to make the computation easier in some cases. For example, in \cite{lapointe-morse}, Lapointe and Morse show that the weak order on the quotient $\PB{k+1}$ is order-isomorphic to the $k$-Young lattice. Furthermore, some intervals of the $k$-Young lattice are intervals of the Young lattice. The size of intervals of the Young lattice is given by a classical determinant formula due to Kreweras, thus providing an alternative calculation to Proposition \ref{p:computeuniform} for some permutations. (See \cite[Section 2.3.7]{kreweras}.)

However, the affine symmetric group contains many weak order intervals isomorphic to weak order intervals of the symmetric group. By \cite[Theorem 1.4]{dittmer-pak}, computing the size of weak order intervals in $S_n$ is $\#P$-complete. Unless there is something special about the weak order intervals in Theorem \ref{t:weakordertheorem}, computing $\PRB(f,\pi)$ is hard when $f$ is uniform. 
\\ \\
\textbf{Conjecture}: Computing $\PRB(f,\pi)$ for the uniform density function $f$ on $[-1,1]$ and arbitrary $\pi \in S_n$ is $\#P$-complete.
\begin{center}
ACKNOWLEDGEMENTS
\end{center}
The author would like to thank Jim and Kate Daly and Emily Pavey for their support. The author also thanks Dana Ernst, Michael Falk, and Jim Swift of NAU's Algebra, Combinatorics, Geometry, and Topology Seminar for their comments on a talk based on an earlier version of this paper. 

\end{document}